\numberwithin{equation}{section}
\numberwithin{figure}{section}
\theoremstyle{definition}
\newtheorem{thm}{\protect\theoremname}[section]
  \theoremstyle{definition}
  \theoremstyle{definition}
  \newtheorem{defn}[thm]{\protect\definitionname}
  \theoremstyle{definition}
  \newtheorem{lem}[thm]{\protect\lemmaname}
  \theoremstyle{remark}
  \newtheorem{rem}[thm]{\protect\remarkname}
  \theoremstyle{definition}
  \newtheorem{cor}[thm]{\protect\corollaryname}
  \theoremstyle{definition}
  \newtheorem*{thm*}{\protect\theoremname}
  \theoremstyle{definition}
  \newtheorem{prop}[thm]{\protect\propositionname}
\numberwithin{equation}{section}
\numberwithin{figure}{section}
 \let\footnote=\endnote
\theoremstyle{definition}
\newtheorem{thmx}{Theorem}
\def\d{\delta}
\def\L{\Lambda}
\def\D{\mathbb{D}}
\def\R{\mathbb{R}}
\def\C{\mathbb{C}}
\def\N{\mathbb{N}}
\def\Z{\mathbb{Z}}
\def\ep{\varepsilon}
\def\vphi{\varphi}
\address{Department of Mathematics, The University of Chicago, Chicago, IL 60637, USA} 
\email{kihopark@math.uchicago.edu}
\def\A{\mathcal{A}}
\def\B{\mathcal{B}}
\def\L{\mathcal{L}}
\def\D{\mathcal{D}}
\def\F{\mathcal{F}}
\def\P{\mathsf{P}}
\def\R{\mathbb{R}}
\def\C{\mathbb{C}}
\def\CC{\mathcal{C}}
\def\I{\mathrm{I}}
\def\J{\mathrm{J}}
\def\K{\mathrm{K}}
\def\W{\mathcal{W}}
\def\M{\mathcal{M}}
\def\Md{M_{d \times d}(\R)}
\def\vps{\vphi^s}
\def\U{\mathcal{U}}
\def\V{\mathbb{V}}
\def\VV{\mathcal{V}}
\def\WW{\mathbb{W}}
\def\ep{\varepsilon}
\def\hol{H\"older }
\def\fb{fiber-bunched }
\def\mangle{\measuredangle}
\newcommand{\Wloc}{\mathcal{W}_{\text{loc}}}
\newcommand{\Sig}{\Sigma_T}
\newcommand{\slr}{\text{SL}_d(\R)}
\newcommand{\glr}{\text{GL}_d(\R)}
  \providecommand{\corollaryname}{Corollary}
  \providecommand{\definitionname}{Definition}
  \providecommand{\factname}{Fact}
  \providecommand{\lemmaname}{Lemma}
  \providecommand{\propositionname}{Proposition}
  \providecommand{\remarkname}{Remark}
  \providecommand{\theoremname}{Theorem}
\providecommand{\theoremname}{Theorem}
\begin{document}
\sloppy
\title{Quasi-multiplicativity of typical cocycles}
\author{Kiho Park}
\date{\vspace{-5ex}}
\begin{abstract}
We show that typical (in the sense of \cite{bonatti2004lyapunov} and \cite{avila2006simplicity}) \hol and fiber-bunched $\glr$-valued cocycles over a subshift of finite type are uniformly quasi-multiplicative with respect to all singular value potentials. We prove the continuity of the singular value pressure and its corresponding (necessarily unique) equilibrium state for such cocycles, and apply this result to repellers. Moreover, we show that the pointwise Lyapunov spectrum is closed and convex, and establish partial multifractal analysis on the level sets of pointwise Lyapunov exponents for such cocycles.
\end{abstract}
\date{}
\maketitle
\tableofcontents

\section{Introduction}
Given a finite set of $M_{d \times d}(\R)$ matrices $\mathsf{A}=\{A_1,\ldots,A_q\}$ and an infinite word $x^+ = x_0x_1x_2\ldots$ where each $x_j \in \{1,2,\ldots,q\}$, consider the products 
\begin{equation}\label{eq: product matrices}
A_{x_{n}}\ldots A_{x_0},
\end{equation}
for $n=1,2,\ldots$.
The study of such products naturally arises in many settings and has numerous applications. For instance, suppose each $A_i$ is contracting, and $T_i$ is an affine transformation of $\R^d$ whose linear part is $A_i$; that is, $T_i(x) = A_ix+r_i$ for some translation vector $r_i$. Then there exists a unique self-affine attractor $X\subset \R^d$ invariant under $\{T_1,\ldots, T_q\}$, in the sense that $X  =\bigcup\limits_{i=1}^q T_iX$; see \cite{hutchinson1981fractals}. The local geometry of the attractor $X$ depends on properties of the composition of the linear contractions \eqref{eq: product matrices}; for example, the Hausdorff dimension of $X$ is intimately related to the growth of the product \eqref{eq: product matrices} over all possible words $x^+$. See Remark \ref{rem: self-affine sets}.

Among many methods to analyze the product \eqref{eq: product matrices}, one is to study the limit (if it exists) of the following expression 
$$\lim\limits_{n \to \infty}\frac{1}{n}\log\|A_{x_{n-1}}\ldots A_{x_0}\|.$$
If the limit exists at $x^+ = x_0x_1\ldots$, we call it the \textit{pointwise Lyapunov exponent} of $x^+$, and it measures the asymptotic growth rate of the product \eqref{eq: product matrices}.
Once we put a standard metric on the space of all possible words (see Section 2; roughly, two words $x^+$ and $y^+$ are close if they agree along a long initial string), it is not hard to see that in general the pointwise Lyapunov exponent is a discontinuous function in $x^+$. Nonetheless, under mild assumptions on the matrices $\mathsf{A}$, the structure of the Lyapunov spectrum (i.e., the values of the pointwise Lyapunov exponent) is quite regular. For example, under the assumption that the matrices in $\mathsf{A}$ do not preserve a common proper subspace of $\R^d$ (i.e., $\mathsf{A}$ is irreducible), Feng \cite{feng2003lyapunov,feng2009lyapunov} showed that the spectrum is a closed interval.

The product of matrices \eqref{eq: product matrices} can be placed in a broader context.
To any dynamical system $f\colon X\to X$ and map $\A \colon X \to \Md$, we can associate a \textit{linear cocycle} $F_\A \colon X \times \R^d \to X \times \R^d$ given by
$$F_\A(x,v) = (fx,\A(x)v).$$
We say that $F_\A$ is \textit{generated} by $f$ and $\A$.
For $n \in \N$ and $x \in X$, we write $F_\A^n(x,v) = (f^nx,\A^n(x)v)$, where 
$$\A^n(x):= \A(f^{n-1}x)\ldots \A(fx)\A(x).$$
The definition of linear cocycle $F$ also extends to (not necessarily trivializable) vector bundles $\mathcal{E}$ over $X$ as a family of linear maps $F_x \colon \mathcal{E}_x \to \mathcal{E}_{fx}$ covering a base system $(X,f)$. 

When the base system is the left shift operator on a one-sided shift $\Sigma_q^+ = \{1,2,\ldots,q\}^{\N_0}$, then the map $\A \colon \Sigma_q^+ \to \Md$ defined by $x = (x_i)_{i \in \N_0} \mapsto A_{x_0}$ generates a linear cocycle $F_\A$. The cocycle $F_\A$ encodes the products \eqref{eq: product matrices} in the sense that $\A^n(x) = A_{x_{n-1}}\ldots A_{x_0}$, and it is an example of a locally constant cocycle (See Definition \ref{defn: loc constant} and Remark \ref{rem: loc constant}). 

Another natural class of linear cocycles comes from smooth dynamics. When the base system $f \colon M \to M$ is a smooth map or diffeomorphism of a closed Riemannian manifold $M$, the \textit{derivative cocycle} $Df$ is a cocycle generated by the map $\A(x) = D_xf \colon T_xM \to T_{fx}M$. More generally, for any $Df$-invariant sub-bundle $E \subset TM$, the derivative map restricted to $E$ gives rise to a linear cocycle $Df|_E$. If $f$ is uniformly hyperbolic (i.e., expanding or Anosov), then there exists a symbolic coding of $f$ by a subshift of finite type \cite{sinai1968markov}, \cite{bowen1970markov}. From such a coding, the derivative cocycle of a uniformly hyperbolic map can effectively be regarded as a linear cocycle over a subshift of finite type.


The main objects of interest in this paper are linear cocycles $F_\A$ over a subshift of finite type $(\Sigma,f)$ generated by $\glr$-valued functions on $\Sigma$. In particular, we study the thermodynamic formalism of such cocycles. Any $\A \colon \Sigma \to \glr$ defines a sequence of continuous functions $\{\vphi_{\A,n}\}_{n \in \N}$ on $\Sigma$ given by $$\vphi_{\A,n}(x) = \|\A^n(x)\|,$$ where $\|\cdot \|$ is the operator norm.
The submultiplicativity of the norm $\|\cdot \|$ implies that this sequence is \textit{submultiplicative} in the sense that for any $m,n \in \N$, 
$$0 \leq \vphi_{\A,n+m} \leq \left(\vphi_{\A,n}\circ f^m \right) \cdot \vphi_{\A,m}.$$
A submultiplicative sequence gives rise to a \textit{singular value potential} $\Phi_\A = \{\log \vphi_{\A,n}\}_{n\in \N}$. 
The singular value potential $\Phi_\A$ is an example of a \textit{subadditive potential} $\Psi = \{\log \psi_n\}_{n \in \N}$
which can be thought of as a generalization of the Birkhoff sum $S_n\psi$ for a potential $\psi \in C(\Sigma,\R)$. The usual thermodynamical notions of the pressure and the equilibrium states of a potential $\psi$ extend to subadditive potentials \cite{cao2008thermodynamic}.

In his fundamental work on Thermodynamic formalism, Bowen \cite{bowen1974some} showed that for any \hol potential $\psi$ on a mixing hyperbolic system such as $(\Sigma,f)$, there exists a unique equilibrium state for $\psi$, and that such equilibrium state has the Gibbs property. 

It is natural to ask if Bowen's theorem (with suitable generalizations) holds for subadditive potentials such as $\Phi_\A$. Unfortunately, the analogue of Bowen's theorem does not necessarily hold for general subadditive potentials \cite{feng2010equilibrium}. On the other hand, Bowen's theorem remains valid for singular value potentials of certain cocycles, including the cocycles generated by locally constant $\glr$-valued functions satisfying an extra assumption known as quasi-multiplicativity. Denoting the set of all admissible words of $\Sigma$ by $\L$, for any $\A\colon \Sigma \to \glr$ and $\I \in \L$, we define 
\begin{equation}\label{eq: A(I)}
\|\A(\I)\|:=\max\limits_{x \in [\I]}\vphi_{\A,|\I|}(x) = \max_{x \in [\I]}\|\A^{|\I|}(x)\|.
\end{equation}

We say $\A$ is \textit{quasi-multiplicative} if there exist $c>0$ and $k \in \N$ such that for any words $\I,\J \in \L$, there exists $\K = \K(\I,\J) \in \L$ with $|\K| \leq k$ such that $\I\K\J \in \L$ and 
\begin{equation}\label{eq: qm intro}
\|\A( \I\K\J) \| \geq c \|\A(\I)\| \|\A(\J)\|.
\end{equation}
Notice that quasi-multiplicativity of $\A$ resembles Bowen's specification property \cite{bowen1974some} in some respects.

For locally constant cocycles, there is a sufficient condition that guarantees quasi-multiplicativity.
We say that a locally constant $\glr$-valued function $\A$ is \textit{irreducible} if the image of $\A$ (which is necessarily a finite set of matrices) doesn't preserve a common proper subspace of $\R^d$. It is well-known that an irreducible locally constant cocycle is quasi-multiplicative \cite{feng2010equilibrium}, \cite{bochi2018equilibrium}. Hence, for such cocycles $F_\A$, there is a unique equilibrium state for the singular value potential $\Phi_\A$. Such equilibrium states often have applications in the dimension theory of fractals \cite{falconer1988subadditive}, \cite{bochi2018equilibrium}.

In this paper, we address the question of whether quasi-multiplicativity holds for more general cocycles beyond locally constant cocycles. It is not entirely clear what the natural counterpart to irreducibility might be for general cocycles. On the other hand, since quasi-multiplicativity is a typical feature of locally constant cocycles, it is reasonable to expect that quasi-multiplicativity holds for a more general class of cocycles with suitable assumptions.

We restrict our attention to \hol continuous and fiber-bunched (See Section 2 for precise definitions) cocycles, a class that contains the locally constant cocycles. The fiber-bunching assumption is an open condition which roughly says that the cocycle is nearly conformal. We denote the space of $\alpha$-\hol and \fb functions by $C^\alpha_b(\Sigma,\glr)$, viewed as a subset of $C^\alpha(\Sigma,\glr)$.

Our main result establishes that quasi-multiplicativity holds generically among these cocycles. More precisely, Bonatti and Viana in \cite{bonatti2004lyapunov} introduced the notion of {\em typical} cocycles among fiber-bunched cocycles
(see Definition \ref{defn: 1-typical}, \ref{defn: t-typical}, and \ref{defn: typical} for precise formulations). The set
$$\U := \{\A \in C^\alpha_b(\Sigma,\glr) \colon \A \text{ is typical}\}$$
is open in $C^\alpha_b(\Sigma,\glr)$, and Bonatti and Viana \cite{bonatti2004lyapunov} also proved that $\U$ is dense in $C^\alpha_b(\Sigma,\slr)$ and that its complement has infinite codimension.
\begin{thmx}\label{thm: A}
Every $\A \in \U$ is quasi-multiplicative. Moreover, the constants $c,k$ in \eqref{eq: qm intro} can be chosen uniformly in a neighborhood of $\A$ in $\U$. 
\end{thmx}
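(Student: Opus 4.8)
The plan is to exploit the defining features of typicality — the existence of a periodic point $p$ whose holonomy-twisted return cocycle $P$ has simple spectrum (pinching), together with a second periodic orbit through a point $z$ homoclinically related to $p$ whose twisting map puts the flag of $P$ in general position (twisting) — to manufacture, for any pair of admissible words $\I, \J$, a bounded bridging word $\K$ that forces the norm of $\A(\I\K\J)$ not to collapse. First I would recall the canonical identification, valid for \fb \hol cocycles, between the cocycle over a long word and linear maps between fibers, conjugated by the stable/unstable holonomies $H^s, H^u$; the key point is that holonomies are uniformly bounded on $\Sigma$, so up to a uniform multiplicative constant $\|\A^n(x)\|$ equals the norm of the holonomy-reduced cocycle, which may be transported to a single fixed fiber (say the fiber over $p$). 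Thus it suffices to produce $\K$ so that the composition $\A^{|\J|}(y)$, pushed to the $p$-fiber, is not nearly annihilated by the image of $\A^{|\I|}(x)$, again after holonomy reduction.

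The core mechanism is the standard Bonatti–Viana argument that typicality gives a matrix (a product of the return cocycle $P$ over many periods of $p$, pre- and post-composed with the twisting holonomy and with holonomies linking arbitrary words to the orbit of $p$) whose action sends any prescribed line off any prescribed hyperplane. Concretely: given $\I$ with $x \in [\I]$ and $\J$ with $y \in [\J]$, let $u_{\I} \in \R^d$ be a unit vector realizing $\|\A^{|\I|}(x)\|$ (the top singular direction of the image) and let $w_{\J}$ be the top singular direction of $\A^{|\J|}(y)$ viewed in the appropriate fiber. Using the transitivity of the subshift, pick words of bounded length connecting the last symbol of $\I$ to the start of $p$'s period, a bounded number $N$ of copies of $p$'s period, then the twisting loop at $z$, then more copies, then a bounded connector to the first symbol of $\J$; call the concatenation $\K$, and note $|\K|$ is bounded by a constant depending only on $(\Sigma,f)$, the period lengths, and $N$. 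Pinching guarantees that after $N$ iterations of $P$ the image of $w_{\J}$ is exponentially (in $N$) close to the dominant eigendirection $e_1$ of $P$; twisting guarantees we may choose the loop at $z$ so that $e_1$ is moved to a direction whose $H^s$-transport to the $\I$-fiber is uniformly bounded away from the hyperplane $u_{\I}^\perp$ (orthogonal complement of the top singular direction of $\A^{|\I|}(x)^{\t}$). Then $\|\A^{|\I|}(x) \cdot (\text{transported } w_{\J})\| \geq c_0 \|\A^{|\I|}(x)\|$ for a uniform $c_0 > 0$, and since the transported vector has norm comparable to $\|\A^{|\J|}(y)\|^{-1}$ times $\|\A^{|\J|}(y) w_{\J}\| = \|\A^{|\J|}(y)\|$ up to holonomy constants, we recover $\|\A(\I\K\J)\| \geq c \|\A(\I)\|\|\A(\J)\|$ with $c, k$ absolute.

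The main obstacle is uniformity: the connecting words and the number $N$ of periods must be chosen so that the gap constants (distance of the moved eigendirection from the bad hyperplane, and the contraction rate toward $e_1$) do not degenerate as $\I, \J$ vary over all of $\L$. This is where one must be careful that the "bad" hyperplane $u_{\I}^\perp$ ranges over a compact set (the projective space) while the twisting hypothesis only asserts genericity for the finitely many flag elements of $P$; the resolution is that twisting at $z$ can be iterated — one has a whole group generated by the twisting holonomy and powers of $P$ acting on flags — so a compactness/openness argument produces a single $N$ and a finite menu of connecting loops that work simultaneously for all target hyperplanes, at the cost of enlarging $k$. Finally, for the "moreover" clause, I would observe that every ingredient — the periodic points $p$ and $z$, the eigendirection $e_1$, the spectral gap of $P$, the transversality of the twisted flag, and the uniform bound on holonomies — varies continuously on $C^\alpha_b(\Sigma,\glr)$ (holonomies depend continuously on $\A$ in the \hol topology by the \fb estimates), so the same $\K$'s and a slightly smaller $c$ work on a whole neighborhood of $\A$ in $\U$.
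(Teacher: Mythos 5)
Your overall architecture is the same as the paper's (reduce via uniformly bounded holonomies, build a bounded bridge through the periodic point $p$ and the homoclinic loop at $z$, finish with a linear-algebra lower bound, and get uniformity of $c,k$ from continuity of the data), but two of your steps do not work as written. First, the claim that ``pinching guarantees that after $N$ iterations of $P$ the image of $w_{\J}$ is exponentially (in $N$) close to the dominant eigendirection $e_1$,'' with $N$ uniform over all words, is false: if the transported singular direction lies in, or arbitrarily near, the $P$-invariant hyperplane spanned by the non-dominant eigenvectors, no bounded number of iterates of $P$ brings it near $e_1$ (and on that hyperplane it never approaches $e_1$). This is precisely the point of Lemma \ref{lem: turn}: a uniformly bounded number of iterates of $P$ brings an arbitrary direction close to \emph{some} eigendirection $v_i$; only then does the twisting condition (B0) have content (it concerns only the eigendirections), and one applies $\psi_p^z$ to push the direction off all the hyperplanes $\WW_j$ and then $P^\ell$ to land in a cone around $v_1$ (Lemma \ref{lem: lin algebra immediate}), with the rectangle-holonomy errors absorbed by uniform continuity. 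Your sketch uses the twisting loop only once and for a different purpose, so the alignment step is broken.

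Second, you ask the twisting loop to move $e_1$ uniformly off the \emph{arbitrary} hyperplane $u_{\I}^\perp$; but (B0) gives transversality only to the finitely many eigen-hyperplanes, and your proposed repair (a finite menu of words from the semigroup generated by $\psi_p^z$ and $P$, plus compactness) is left as a sketch: you would still need to prove the spanning/openness statement, control the holonomy corrections that turn loop computations into admissible words, and, independently, fix the incoming-direction problem above. The paper avoids all of this by symmetrizing: it runs the same turn-and-twist construction for the adjoint cocycle $\A_*$ (Lemma \ref{lem: adjoint typical}) on the $\J$-side, so that the relevant singular direction of the $\J$-block is brought near the top eigendirection $w_1$ of $P^*$, and the needed transversality is the fixed a priori angle $\beta=\mangle(v_1,\WW_1)>0$; Lemma \ref{lem: linear algebra 2} then yields $\|\A(\I\K\J)\|\geq c\|\A(\I)\|\|\A(\J)\|$. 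Two further points: your closing assertion that ``the same $\K$'s and a slightly smaller $c$ work on a whole neighborhood'' contradicts Remark \ref{rem: nonuniform K0} --- only $c$ and $k$ can be chosen uniformly, the connecting word itself depends on the singular directions of the long blocks and cannot be fixed under perturbation (fortunately the theorem only requires uniform $c,k$); and with the paper's composition convention the $\I$-block acts first, so the direction to align is the image singular direction of the $\I$-block while the bad hyperplane comes from the $\J$-block --- your roles are swapped, which amounts to bridging the pair $(\J,\I)$; harmless, but it should be fixed for a clean write-up.
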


Theorem \ref{thm: A} follows from a more general result: for 	$\A \in \U$, Theorem \ref{thm: E} (see Section 2) gives simultaneous quasi-multiplicativity of the exterior product cocycles $\A^{\wedge t}$, $t \in \{1,\ldots, d-1\}$ with uniform constants $c$ and $k$. 

As an application, we prove the continuity of the subadditive pressure on $\U$. More precisely, there exists a natural generalization of the singular value potential $\Phi_\A^s$ for all $s \in [0,\infty)$ by considering the exterior product cocycles $\A^{\wedge t}$ (See Section 2). Denoting the pressure of $\Phi_\A^s$ by $\P(\Phi_\A^s)$, we establish the following continuity result using the uniform constants $c,k$ from Theorem \ref{thm: E}: 

\begin{thmx}\label{thm: B}
~
\begin{enumerate}
\item The map $(\A,s) \mapsto \P(\Phi_\A^s)$ is continuous on $\U \times [0,\infty)$.
\item For each $\A \in \U$ and $s \in [0,\infty)$, the singular value potential $\Phi_\A^s$ has a unique equilibrium state $\mu_{\A,s}$, which also varies continuously on $\U \times [0,\infty)$.
\end{enumerate}
\end{thmx}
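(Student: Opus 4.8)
The plan is to derive Theorem~\ref{thm: B} from the uniform quasi-multiplicativity provided by Theorem~\ref{thm: E}, together with standard subadditive thermodynamic formalism. The key conceptual input is that for a quasi-multiplicative potential the subadditive pressure is ``almost additive'' in a uniform way, which upgrades the usual semicontinuity of pressure to full continuity. I would proceed in four steps.

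\emph{Step 1: Quasi-multiplicativity implies a two-sided estimate on partition sums.} For $\A \in \U$ with uniform constants $c, k$ from Theorem~\ref{thm: E}, and for each exterior power $t \in \{1,\dots,d-1\}$, the quantities $\|\A^{\wedge t}(\I)\|$ over admissible words $\I$ of length $n$ satisfy, besides the trivial submultiplicative bound $\|\A^{\wedge t}(\I\J)\| \le \|\A^{\wedge t}(\I)\|\,\|\A^{\wedge t}(\J)\|$, the reverse bound $\|\A^{\wedge t}(\I\K\J)\| \ge c\,\|\A^{\wedge t}(\I)\|\,\|\A^{\wedge t}(\J)\|$ for a connecting word $\K$ of length $\le k$. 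Consequently the singular value potential $\Phi_\A^s$ (built from the $\|\A^{\wedge t}\|$ by the usual Falconer-type formula, see Section~2) has the property that the partition sums $Z_n(\Phi_\A^s) := \sum_{|\I| = n} \exp(\sup_{[\I]} \Phi_{\A,n}^s)$ are both submultiplicative up to a bounded factor and supermultiplicative up to a bounded factor; the supermultiplicativity constant depends only on $c, k$ and $d$ (there are at most $(\#\text{alphabet})^k$ choices of connecting word, and $\|\A^{\wedge t}\|$ is bounded on the compact space $\Sigma$ in a neighborhood of $\A$). Hence $\P(\Phi_\A^s) = \lim_n \tfrac1n \log Z_n(\Phi_\A^s)$ exists and, crucially, $\bigl|\tfrac1n \log Z_n(\Phi_\A^s) - \P(\Phi_\A^s)\bigr| \le C/n$ with $C$ uniform on a neighborhood of $\A$ in $\U$ and locally uniform in $s$.

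\emph{Step 2: Continuity of the pressure.} Fix $(\A_0, s_0)$. For each fixed $n$, the finite sum $Z_n(\Phi_\A^s)$ depends continuously on $(\A, s)$: the functions $\A \mapsto \|\A^{\wedge t}(\I)\|$ are continuous (indeed the cocycle entries vary continuously in $C^0$, and taking finite products, exterior powers, operator norms, and finitely many maxima over $[\I]$ preserves continuity -- using compactness of $\Sigma$ and that $\A^n$ depends only on finitely many coordinates up to \hol control), and $s \mapsto \Phi_{\A,n}^s$ is continuous. Given $\ep > 0$, choose $n$ with $C/n < \ep/3$ uniformly near $(\A_0,s_0)$ by Step~1; then choose a neighborhood of $(\A_0,s_0)$ on which $|\tfrac1n \log Z_n(\Phi_\A^s) - \tfrac1n \log Z_n(\Phi_{\A_0}^{s_0})| < \ep/3$. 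The triangle inequality gives $|\P(\Phi_\A^s) - \P(\Phi_{\A_0}^{s_0})| < \ep$. This proves part (1).

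\emph{Step 3: Uniqueness of the equilibrium state.} This is where quasi-multiplicativity is again essential: by the work on quasi-multiplicative subadditive potentials (Bowen-type theorem; cf.\ \cite{feng2010equilibrium}, \cite{bochi2018equilibrium}, \cite{cao2008thermodynamic}), a quasi-multiplicative \hol-type subadditive potential on the mixing SFT $(\Sigma,f)$ has a unique equilibrium state $\mu_{\A,s}$, which moreover has a Gibbs property: $\mu_{\A,s}([\I]) \asymp \exp(-n\P(\Phi_\A^s))\exp(\sup_{[\I]}\Phi_{\A,n}^s)$ with comparability constant controlled by $c, k$ (hence uniform near $\A_0$ and locally uniform in $s_0$). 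I need to check $\Phi_\A^s$ fits the hypotheses of that theorem for every $s \ge 0$ -- the singular value potentials $\Phi_\A^{\wedge t}$ are \hol in the appropriate (subadditive, bounded-distortion) sense because $\A$ is \hol and \fb, and $\Phi_\A^s$ is an explicit interpolation of these, so this is routine.

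\emph{Step 4: Continuity of the equilibrium state.} Suppose $(\A_m, s_m) \to (\A_0, s_0)$ in $\U \times [0,\infty)$. The measures $\mu_{\A_m, s_m}$ lie in the weak* compact space of $f$-invariant probability measures, so it suffices to show every weak* limit point equals $\mu_{\A_0,s_0}$; by uniqueness (Step~3) this is equivalent to showing any limit point $\nu$ is an equilibrium state for $\Phi_{\A_0}^{s_0}$. Since entropy $h_\nu(f)$ is upper semicontinuous on the SFT and the subadditive integrals $\mathcal{F}_*(\Phi_\A^s, \mu) := \lim_n \tfrac1n \int \Phi_{\A,n}^s \, d\mu$ vary appropriately (upper semicontinuity in $\mu$, and continuity in $(\A,s)$ uniformly over $\mu$ using the uniform Gibbs/distortion bounds from Step~3 together with the uniform convergence rate in Step~1), one gets $h_\nu(f) + \mathcal{F}_*(\Phi_{\A_0}^{s_0},\nu) \ge \limsup_m \bigl(h_{\mu_{\A_m,s_m}}(f) + \mathcal{F}_*(\Phi_{\A_m}^{s_m}, \mu_{\A_m,s_m})\bigr) = \limsup_m \P(\Phi_{\A_m}^{s_m}) = \P(\Phi_{\A_0}^{s_0})$ by part (1); since the reverse inequality is automatic, $\nu$ is an equilibrium state. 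Alternatively, and perhaps more cleanly, one can transfer uniqueness and continuity through the uniform Gibbs estimates directly: the Gibbs property with uniform constants forces the family $\{\mu_{\A,s}\}$ to be equicontinuous in a suitable sense on cylinders, and pointwise convergence on cylinders of any limit to $\mu_{\A_0,s_0}$ follows from continuity of $\|\A^{\wedge t}(\I)\|$ and of the pressure. I expect the main obstacle to be Step~4: making the convergence of the subadditive integrals $\mathcal{F}_*(\Phi_{\A_m}^{s_m}, \mu_m)$ genuinely uniform over the varying measures $\mu_m$, rather than for a fixed measure -- this is exactly where one must exploit that the uniform quasi-multiplicativity constants (hence the uniform Gibbs constants) do not degenerate near $\A_0$, which is precisely the extra strength of Theorem~\ref{thm: E} over a pointwise quasi-multiplicativity statement.
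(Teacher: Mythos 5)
Your proposal is correct and follows essentially the same route as the paper: uniform quasi-multiplicativity from Theorem \ref{thm: E} (extended to all $s$ as in Remark \ref{rem: s_0 for all R+}) gives almost super-multiplicativity of the partition sums with constants uniform near $\A$ (the paper's Lemma \ref{lem: superadditive}), hence a Fekete-type rate $C/n$ that upgrades continuity of the finite-level sums to continuity of the pressure, while uniqueness comes from the Feng-type Bowen theorem for quasi-multiplicative potentials with bounded distortion (Proposition \ref{prop: unique eq as gibbs}) and continuity of $\mu_{\A,s}$ from the weak-$*$ limit argument using upper semicontinuity of entropy and of $\F$ together with part (1). The only deviations are cosmetic: the paper splits the pressure continuity into the $s$-direction (bounded distortion) and the $\A$-direction (Fekete) rather than arguing jointly, and the uniform Gibbs bounds you invoke in Step 4 are not actually needed there, since $\F$ is upper semicontinuous jointly in $(\A,s,\mu)$ simply as an infimum of continuous functions, which is all the paper uses.
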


Cao, Pesin, and Zhao \cite{cao2018dimension} recently proved a result that implies Theorem \ref{thm: B} (1). See Section 3 for further comments.

Theorem \ref{thm: B} has further applications in dimension theory of repellers. Given a repeller $\Lambda$ (see Definition \ref{defn: repeller}), one can associate a number $s(\Lambda)$ obtained as the unique zero of Bowen's equation. 
Such number $s(\Lambda)$ is an upper bound, and often a natural estimate, on the Hausdorff dimension of the repeller. In fact, there are many settings in which $s(\Lambda)$ is equal to the Hausdorff dimension. See Section 5 and a survey \cite{chen2010dimension} for more details on the number $s(\Lambda)$. From its definition, it follows that $s(\Lambda)$ varies upper semi-continuously under small perturbations of the repeller $\Lambda$. 
Using Theorem \ref{thm: B}, we prove a result on the continuity of $s(\Lambda)$:

\begin{thmx}\label{thm: C}
Let $M$ be a Riemannian manifold, and let $h \colon M \to M$ be a $C^{r}$ map with $r>1$. Suppose $\Lambda \subset M$ is a $\alpha$-bunched repeller defined by $h$ for some $\alpha\in (0,1)$ with $r-1>\alpha$. Then there exist a $C^1$-neighborhood $\VV_1$ of $h$ in $C^{r}(M,M)$ and  a $C^1$-open and $C^r$-dense subset $\mathcal{V}_2 \subset \mathcal{V}_1$ such that the map $$g \mapsto s(\Lambda_g)$$ is continuous on $\mathcal{V}_2$.
\end{thmx}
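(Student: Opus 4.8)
The plan is to transfer the continuity statement for the subadditive pressure (Theorem B) through the symbolic coding of the repeller, and then extract $s(\Lambda_g)$ as the zero of Bowen's equation. First I would recall that an $\alpha$-bunched $C^r$ repeller $\Lambda$ admits a finite Markov partition and hence a coding $\pi\colon (\Sigma,f)\to(\Lambda,h)$ by a subshift of finite type, under which the derivative cocycle $Dh$ pulls back to a \hol continuous cocycle $\A_h\in C^\alpha(\Sigma,\glr)$; the bunching hypothesis $r-1>\alpha$ on $h$ is exactly what guarantees $\A_h$ is fiber-bunched, so $\A_h\in C^\alpha_b(\Sigma,\glr)$. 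The number $s(\Lambda)$ is by definition the unique $s$ with $\P(\Phi^s_{\A_h})=0$, where $\Phi^s_{\A_h}$ is the singular value potential built from the exterior powers of $(Dh)^{-1}$ (or equivalently of $Dh$ along the unstable direction); since $s\mapsto\P(\Phi^s_{\A})$ is strictly decreasing and $\P(\Phi^0_\A)=h_{\mathrm{top}}(f)>0$ with $\P(\Phi^s_\A)\to-\infty$, the zero exists and is unique and depends continuously on $\A$ wherever $\A\mapsto\P(\Phi^s_\A)$ is jointly continuous.

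Next I would produce the neighborhoods. A $C^1$-small perturbation $g$ of $h$ still has a repeller $\Lambda_g$ with the same symbolic dynamics (structural stability of repellers / expanding maps), and by structural stability the conjugacy can be taken close to identity, so the pulled-back cocycle $\A_g\in C^\alpha_b(\Sigma,\glr)$ depends continuously on $g$ in the $C^\alpha$-topology for $g$ in a $C^1$-neighborhood $\VV_1$ of $h$; shrinking $\VV_1$ if necessary, fiber-bunching is preserved since it is an open condition. Now invoke the Bonatti--Viana genericity statement quoted in the introduction: the set $\U$ of typical cocycles is $C^\alpha$-open and $C^\alpha$-dense (with complement of infinite codimension), so $\mathcal{V}_2:=\{g\in\VV_1 \colon \A_g\in\U\}$ is $C^1$-open and $C^r$-dense in $\VV_1$ — openness because $\U$ is open and $g\mapsto\A_g$ is continuous, density because one can realize arbitrary small $C^r$-perturbations of $g$ (changing the derivative freely) to push $\A_g$ into the dense set $\U$. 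On $\mathcal{V}_2$ the cocycle $\A_g$ lies in $\U$, so Theorem B(1) applies: $(g,s)\mapsto\P(\Phi^s_{\A_g})$ is continuous, and by the implicit-function-type argument for the strictly monotone zero, $g\mapsto s(\Lambda_g)$ is continuous on $\mathcal{V}_2$.

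The main obstacle I expect is the bookkeeping that makes $g\mapsto\A_g$ a well-defined continuous map into a \emph{fixed} space $C^\alpha_b(\Sigma,\glr)$: the Markov partition and coding are set up for $h$, and one must check that for all $g\in\VV_1$ the same coding works (via the conjugacy $\theta_g\colon\Lambda\to\Lambda_g$ given by structural stability) and that $g\mapsto Dg\circ\theta_g$ composed with the coding map is continuous from $C^1(M,M)$ (restricted to $C^r$) into $C^\alpha$. This is where the hypothesis $r-1>\alpha$ is used twice — once to get the fiber-bunching estimate for $Dg$ uniformly in $\VV_1$, and once to control the \hol norm of the conjugacy $\theta_g$ — and it is essentially the technical content of Section 5; the pressure-continuity and Bowen-equation parts are then formal consequences of Theorem B. A secondary point to verify carefully is the density of $\mathcal{V}_2$ in the $C^r$-topology: one needs that arbitrarily $C^r$-small perturbations of the map $h$ near the repeller suffice to move the finite-dimensional obstruction (the typicality conditions, which have infinite codimension), which follows because the typicality conditions are conditions on finitely many periodic data of $\A_g$ and these can be perturbed by localized $C^r$-small changes of $g$.
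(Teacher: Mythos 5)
Your overall architecture (code the repeller, pull back the inverse derivative cocycle to a fiber-bunched \hol cocycle over the shift, invoke Theorem B, and extract $s(\Lambda_g)$ as the zero of the strictly decreasing pressure function) is the same as the paper's. The genuine gap is in your density step for $\mathcal{V}_2$. You justify $C^r$-density by appealing to the density of $\U$ in $C^\alpha_b(\Sig,\glr)$ and the claim that small $C^r$-perturbations of $g$ "change the derivative freely" and can move the finitely many periodic data into typical position. But the Bonatti--Viana density statement allows \emph{arbitrary} perturbations of the cocycle, whereas here only the very thin family of perturbations realizable as $\B_g(x)=(D_{\pi x}g)^{-1}$ for $g$ near $h$ is available, and within this family the pinching condition (A0) is not achievable by a localized small perturbation at the periodic point: if the return derivative $D_pg^{\mathrm{per}(p)}$ has a complex-conjugate pair of eigenvalues with nonzero imaginary part, no $C^r$-small change of $g$ makes the spectrum at (the continuation of) $p$ real and simple. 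This is exactly the technical content of the paper's Lemma \ref{lem: VV_2 typical}, which follows Section 9 of \cite{bonatti2004lyapunov}: one composes $Dg$ near $p$ with a one-parameter family of small rotations in the offending $2$-plane of a dominated splitting, passes to periodic points $x_m$ of long period shadowing a homoclinic orbit, and shows that for some small parameter the rotation number of the return map on that plane becomes an integer, after which a further small perturbation produces real simple eigenvalues; iterating resolves all complex pairs. Without an argument of this type (or a citation doing it for derivative cocycles of maps rather than abstract cocycles), the $C^r$-density of $\mathcal{V}_2$ is unproved, and this is the heart of Theorem C rather than a "secondary point to verify." The openness part of your argument, and the deduction of continuity of $s(\Lambda_g)$ from Theorem B once typicality is known, are fine.

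A smaller but real omission: you treat $s(\Lambda_g)$ as "by definition" the zero of the symbolic pressure $s\mapsto\P(\Phi^s_{\B_g})$, but $s(\Lambda_g)$ is defined through the pressure of $\Phi^s_{\Lambda_g}$ on the repeller itself, and the coding map is only finite-to-one, not a conjugacy. One must show $\P(\Phi^s_{\B_g})=\P(\Phi^s_{\Lambda_g})$; the paper does this (Lemma \ref{lem: P=P}) by matching $(n,1)$-separated sets and using the Ledrappier--Walters relativised variational principle to see that the coding does not change entropy. This step is routine but should be stated, since otherwise the continuity supplied by Theorem B applies to the wrong pressure function.
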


As another application of Theorem \ref{thm: E}, we extend and generalize Feng's result \cite{feng2003lyapunov,feng2009lyapunov} which states that the pointwise Lyapunov spectrum of an irreducible locally constant cocycle is a closed interval. By considering the exterior product cocycle $\A^{\wedge t}$, we define 
$$\lambda_t(x) :=\lim\limits_{n \to \infty} \frac{1}{n}\log\vphi^t(\A^n(x)),$$
if the limit exists, and set
$$\vec{\lambda}(x) := (\lambda_1(x),\ldots,\lambda_d(x)),$$
if $\lambda_t(x)$ exists for each $1 \leq t \leq d$. We define the \textit{pointwise Lyapunov spectrum} of $\A$ as 
$$L_{\A}:=\{\vec{\alpha} \in \R^d  \colon \vec{\alpha} = \vec{\lambda}(x) \text{ for some } x \in \Sigma\}.$$

\begin{thmx}\label{thm: D}
Let $\A \in \U$. Then $L_{\A}$ is a closed and convex subset of $\R^d$. 
\end{thmx}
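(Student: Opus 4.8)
The plan is to establish the two properties separately, using quasi-multiplicativity of the exterior power cocycles (Theorem \ref{thm: E}) as the central tool. For convexity, I would first show that for each word $\I \in \L$ the quantities $\log\|\A^{\wedge t}(\I)\|$, $t = 1,\ldots,d$, assemble into a vector $\vec{v}(\I) \in \R^d$, and that quasi-multiplicativity with uniform constants $c,k$ gives, for any $\I,\J \in \L$, a bridging word $\K$ with $|\K|\le k$ so that $\vec{v}(\I\K\J) \ge \vec{v}(\I)+\vec{v}(\J) - C\vec{1}$ coordinatewise, where $C = -\log c$ (one needs the same $\K$ to work simultaneously for all $t$, which is exactly what Theorem \ref{thm: E} provides). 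Given two spectrum points $\vec\alpha = \vec\lambda(x)$ and $\vec\beta = \vec\lambda(y)$ and a rational convex weight $p/q$, I would build a point $z$ whose orbit alternates between long initial blocks of $x$ (of length $\approx pN$) and long initial blocks of $y$ (of length $\approx (p+q)N-pN = qN$), inserted with the bridging words $\K$; subadditivity together with the near-additivity above forces the Cesàro averages $\frac1n\log\vphi^t(\A^n(z))$ to converge to $\frac{p}{q}\alpha_t + \frac{q-p}{q}\beta_t$ as $N\to\infty$ along the block boundaries, and the bounded-length bridges plus fiber-bunchedness (which gives a uniform bound on the distortion contributed by any single symbol, hence control between block boundaries) prevent oscillation in between. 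Density of the rationals and closedness of $L_\A$ then give all of $[\vec\alpha,\vec\beta]$.

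For closedness, suppose $\vec\alpha^{(m)} \to \vec\alpha$ with $\vec\alpha^{(m)} = \vec\lambda(x^{(m)})$. I would extract from each $x^{(m)}$ a finite prefix $\I_m \in \L$ long enough that $\frac{1}{|\I_m|}\log\|\A^{\wedge t}(\I_m)\|$ is within $1/m$ of $\alpha^{(m)}_t$ for every $t$, then concatenate these prefixes (with uniform bridging words from Theorem \ref{thm: E}) into a single infinite word $z$, taking the lengths $|\I_m|$ to grow fast enough that each block dominates the cumulative length of all previous blocks. The quasi-multiplicativity estimate ensures $\log\|\A^{\wedge t}\|$ along $z$ is, up to an $O(|\I_m|\cdot o(1) + \text{const})$ error, additive over the blocks, so the Cesàro averages along block boundaries converge to $\alpha_t$; fiber-bunchedness again controls the in-between fluctuation, yielding $\vec\lambda(z) = \vec\alpha$ and hence $\vec\alpha \in L_\A$. (Alternatively, one can phrase both arguments through subadditive ergodic theory: every extreme/limit behavior of $\lambda_t$ is realized by an ergodic measure via $\lambda_t(\mu) = \lim \frac1n\int\log\vphi^t(\A^n)\,d\mu$, and quasi-multiplicativity upgrades convergence in measure to genuine pointwise realization along a well-chosen generic point.)

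The main obstacle I expect is the passage from \emph{block-boundary} convergence of the Cesàro averages to genuine convergence of $\frac1n\log\vphi^t(\A^n(z))$ for \emph{all} $n$, i.e. ruling out oscillation of the partial averages between consecutive block endpoints. Submultiplicativity bounds the average from above between boundaries, but the matching lower bound requires knowing that $\|\A^n(z)\|$ cannot drop too far below the product of the completed-block norms; here is where fiber-bunchedness is essential, since it yields a uniform constant $\rho>1$ with $\rho^{-1}\le \|\A(w)^{\pm 1}\|\le\rho$ for every symbol $w$ (more precisely a uniform bound on $\|\A^{\wedge t}\|$ over one step), so any partial block contributes a distortion that is at most linear in its length with a controlled slope, and by arranging the block lengths to grow super-linearly this contribution is negligible in the Cesàro limit. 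Making this quantitative — choosing the growth rate of $|\I_m|$ and tracking the three error sources (the $o(1)$ approximation of each $\alpha^{(m)}$, the bounded bridging words, and the one-step distortion bound) simultaneously for all $t\in\{1,\ldots,d\}$ — is the technical heart of the argument, but it is routine once Theorem \ref{thm: E} is in hand.
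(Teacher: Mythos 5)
Your overall strategy is the same as the paper's (concatenate blocks joined by the bridging words from Theorem \ref{thm: E}, with one bridge working simultaneously for all $t$), but the closedness argument has a genuine gap precisely at the step you yourself flag. You record, for each block $\I_m$, only the average over the \emph{full} block, $\tfrac{1}{|\I_m|}\log\|\A^{\wedge t}(\I_m)\|\approx\alpha^{(m)}_t$, and you propose to control times $n$ falling strictly inside a block by the one-step bounds on $\|\A^{\wedge t}\|$ and $m(\A^{\wedge t})$, made ``negligible'' by letting each block dominate the cumulative length of all previous ones. This does not work: if $|\I_{m+1}|\gg L_m$ (the total length so far), then at a time $n=L_m+a$ with $a\approx|\I_{m+1}|/2$ the incomplete block occupies almost all of the orbit segment, so the per-symbol bounds only confine $\tfrac1n\log\vphi^t(\A^n(z))$ to roughly the interval between $\log\varrho$ and $\log\Upsilon$, nowhere near $\alpha_t$; and nothing in your construction prevents a sub-prefix of $\I_m$ from having an average very different from that of $\I_m$ itself (take $x^{(m)}$ whose finite-time averages converge slowly and non-monotonically), in which case the partial averages of $z$ genuinely oscillate and $\vec\lambda(z)$ fails to exist or to equal $\vec\alpha$. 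The paper's proof avoids this by retaining, for each $x_i$, \emph{two-sided} control of $\tfrac1a\log\vphi^t(\A^a(x_i))$ for every $a\ge N_i$ (not just at $a=m_i$), using the crude $\Upsilon,\varrho$ bounds only on the short initial stretch $a<N_{i+1}$ of each new block together with the bridge, and compensating for that stretch by choosing $m_i\gg N_{i+1}$; that is the actual mechanism ruling out oscillation inside blocks, and it is the piece missing from your argument. (Minor: the uniform one-step bounds come from compactness, continuity and $\glr$-valuedness, not from fiber-bunching; fiber-bunching enters through the canonical holonomies and bounded distortion.)

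Your convexity reduction also has a gap. To invoke closedness you need the auxiliary points $z_N$ (fixed-$N$ alternation of prefixes of $x$ and $y$) to lie in the domain of $\vec\lambda$, i.e.\ the limits $\lim_n\tfrac1n\log\vphi^t(\A^n(z_N))$ must exist; your estimates give $\limsup$ and $\liminf$ within $O(1/N)$ of the target but not their equality, so $\vec\lambda(z_N)$ may be undefined and there is no point of $L_\A$ to pass to the limit with. Making $z_N$ exactly periodic does not immediately repair this, since the lower bound on the exponents of a periodic point is a spectral-radius estimate that requires the angle/twisting analysis inside the proof of Theorem \ref{thm: E}, not just its statement. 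The paper instead treats every $\gamma\in[0,1]$ directly by building a single point whose blocks are prefixes $[x]_j^w$ and $[y]_j^w$ repeated $\lfloor\gamma j\rfloor$ and $\lfloor(1-\gamma)j\rfloor$ times: there the individual block lengths grow slowly relative to the cumulative length, so the crude in-block bounds genuinely are negligible, and quasi-multiplicativity at block boundaries yields the exact limit $\gamma\vec{\alpha}+(1-\gamma)\vec{\beta}$.
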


Recall that a repeller $\Lambda$ is {\em conformal} if the derivative map $D_xh$ is a conformal transformation for every $x \in \Lambda$.
Combining Theorem \ref{thm: D} with the proof of Theorem \ref{thm: C}, we obtain the following corollary whose proof appears in Section 6.

\begin{cor}\label{cor: perturb conformal Lyap spectrum}
Let $\Lambda \subset M$ be a conformal repeller defined by a $C^r$ map $h \colon M \to M$ with $r>1$. Then there exists a $C^1$-neighborhood $\VV_1$ of $h$ in $C^r(M,M)$ and a $C^1$-open and $C^r$-dense subset $\VV_2$ of $\VV_1$ such that for every $g \in \VV_2$, the pointwise Lypapunov spectrum $L_{g}$ of $g|_{\Lambda_g}$ is a closed and convex subset of $\R^d$.   
\end{cor}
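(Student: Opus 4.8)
The plan is to prove Corollary \ref{cor: perturb conformal Lyap spectrum} by reducing it to Theorem \ref{thm: D} via the machinery already assembled in the proof of Theorem \ref{thm: C}. The key observation is that a conformal repeller is in particular an $\alpha$-bunched repeller for every $\alpha\in(0,1)$ (conformality makes the derivative cocycle exactly conformal, hence trivially fiber-bunched), so Theorem \ref{thm: C} applies with room to spare: we obtain the $C^1$-neighborhood $\VV_1$ of $h$ in $C^r(M,M)$ and the $C^1$-open, $C^r$-dense subset $\VV_2\subset\VV_1$ on which things behave well. First I would recall from Section 5 how a $C^r$ map $h$ near the original, restricted to the continuation $\Lambda_g$ of the repeller, is coded by a fixed subshift of finite type $(\Sigma,f)$ (structural stability of the repeller gives a conjugacy, and the coding map is unchanged combinatorially), and that under this coding the derivative cocycle $Dg|_{\Lambda_g}$ corresponds to an $\alpha$-\hol, \fb cocycle $\A_g\in C^\alpha_b(\Sigma,\glr)$ depending continuously on $g$ in the $C^1$ topology.

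Second, I would invoke the definition of the dense open set $\VV_2$ from the proof of Theorem \ref{thm: C}: it is (essentially) the preimage under $g\mapsto\A_g$ of the typical set $\U\subset C^\alpha_b(\Sigma,\glr)$, which is open and dense there by Bonatti--Viana. That is, for every $g\in\VV_2$ the associated cocycle $\A_g$ is typical, i.e. $\A_g\in\U$. (If the $\VV_2$ produced in Section 6's proof of Theorem \ref{thm: C} is a proper open dense subset of this preimage, one simply intersects; a $C^1$-open, $C^r$-dense subset of a $C^1$-open, $C^r$-dense subset is again $C^1$-open and $C^r$-dense, using that $C^r$ is dense in $C^1$ on the relevant neighborhood.) Third, I would observe that the pointwise Lyapunov spectrum $L_g$ of $g|_{\Lambda_g}$, defined via the exterior powers $\vphi^t(Dg^n(\cdot))$, is carried isomorphically onto $L_{\A_g}$ by the coding map: the coding is a bi-Lipschitz-along-orbits, finite-to-one semiconjugacy, so the values $\frac1n\log\vphi^t(Dg^n(hx))$ and $\frac1n\log\vphi^t(\A_g^n(x))$ differ by a bounded amount, hence their limits coincide wherever defined, and every point of $\Sigma$ projects to a point of $\Lambda_g$ and conversely. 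Therefore $L_g=L_{\A_g}$ (as subsets of $\R^d$), up to the identification of exterior-power growth rates already used throughout.

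Finally, applying Theorem \ref{thm: D} to $\A_g\in\U$ gives that $L_{\A_g}$ is closed and convex in $\R^d$, and the previous step transfers this to $L_g$. This completes the proof.

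I expect the only genuine subtlety — and hence the step I would be most careful about — is the bookkeeping in the reduction: making sure the set $\VV_2$ from the proof of Theorem \ref{thm: C} really does land inside the typical set $\U$ (rather than merely inside some smaller good set tailored to pressure continuity), and making precise that the symbolic coding identifies the two Lyapunov spectra exactly, including the boundary behavior (closedness is preserved because the coding map $\Sigma\to\Lambda_g$ is continuous and surjective and the limit-existence sets correspond). Neither point is deep — both are essentially repackagings of facts from Sections 2 and 5 — but they are where an inattentive argument could go wrong, so the write-up should spell out the coding correspondence and simply cite the construction of $\VV_2$ from Section 6 with the remark that typicality of $\A_g$ is exactly the hypothesis it was built to guarantee.
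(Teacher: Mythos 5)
Your proposal follows essentially the same route as the paper: fix $\alpha<\min(1,r-1)$, shrink $\VV_1$ so that every $g\in\VV_1$ is $\alpha$-bunched (conformality of $h|_\Lambda$ makes this automatic), obtain a $C^1$-open and $C^r$-dense subset on which the symbolic cocycle attached to $g$ is typical, and then quote Theorem \ref{thm: D}; the paper's proof is exactly this, with the transfer of the spectrum through the coding left implicit.

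One correction to your bookkeeping, which is precisely the point you flagged as the ``genuine subtlety'': the set $\VV_2$ constructed in the proof of Theorem \ref{thm: C} (Lemma \ref{lem: VV_2 typical}) is \emph{not} the preimage of $\U$ under $g\mapsto Dg$; it is built so that the \emph{inverse} derivative cocycle $\B_g(x)=(D_{\chi_g(\pi x)}g)^{-1}$ over $(\Sig,f^{-1})$ is typical, since that is the cocycle whose singular value pressure computes $s(\Lambda_g)$. The pointwise Lyapunov spectrum $L_g$, however, is governed by the forward derivative cocycle $Dg$, and typicality of $\B_g$ does not formally transfer to typicality of $Dg$ (the pinching condition does, but the twisting condition involves the holonomies of the respective cocycles). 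The paper resolves this with Remark \ref{rem: VV_2 typical}: the same perturbation argument, run for the derivative cocycle itself, produces a possibly different $C^1$-open and $C^r$-dense subset $\widetilde{\VV}_2\subset\VV_1$ on which $Dg$ is typical, and it is this set that the corollary uses. So your argument is correct once you replace ``the $\VV_2$ of Theorem \ref{thm: C}'' by this $\widetilde{\VV}_2$ (or rerun Lemma \ref{lem: VV_2 typical} for $Dg$); the rest — the coding identification of $L_g$ with the symbolic spectrum and the application of Theorem \ref{thm: D} — is as in the paper.
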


Finally, we also obtain partial multifractal results on the level sets of pointwise Lyapunov exponents (Corollary \ref{cor: multifractal}) by applying general results in \cite{feng2010lyapunov}.

The paper is organized as follows. In Section 2, we introduce the setting of our results and state the main theorem (Theorem \ref{thm: E}) of the paper.  
In Section 3, we survey relevant results in thermodynamic formalism for both additive and subadditive settings. In Section 4, we prove Theorem \ref{thm: E} in a more general setting. In Section 5, we prove Theorem \ref{thm: B} and \ref{thm: C}. In Section 6, we establish Theorem \ref{thm: D} and Corollary \ref{cor: perturb conformal Lyap spectrum}. Moreover, we discuss further applications of Theorem \ref{thm: E}, including the structure of the pointwise Lyapunov spectrum as well as some of its level sets.
\\\\
\noindent\textbf{Acknowledgements} The author is very grateful to his advisor Amie Wilkinson for her support and numerous helpful discussions. The author would also like to thank Clark Butler for sharing his insights and for pointing out an error in Section 3 of the original draft, and Aaron Brown for many helpful suggestions. 
Lastly, the author also thanks De-Jun Feng for his comments and Ping Ngai Chung for for improving the readability of the paper.

\section{Preliminaries and statement of results}
\subsection{Symbolic dynamics}
An \textit{adjacency matrix} $T$ is a square (0,1)-matrix.
A one-sided \textit{subshift of finite type} defined by a $q \times q$ adjacency matrix $T$ is a dynamical system $(\Sig^+,f)$ where $$\Sig^+ := \{(x_i)_{i \in \N_0} \colon x_i \in \{1,2,\ldots,q\} \text{ and }T_{x_i,x_{i+1}} = 1 \text{ for all }i \in \N_0\}$$ and $f$ is the left shift operator. Similarly, we define a two-sided subshift of finite type $(\Sig,f)$ where 
$$\Sig := \{(x_i)_{i \in \Z} \colon x_i \in \{1,2,\ldots,q\} \text{ and }T_{x_i,x_{i+1}} = 1 \text{ for all }i \in \Z\}.$$
Then $(\Sig,f)$ is the \textit{natural extension} of $(\Sig^+,f)$: denoting the projection from $\Sig$ onto $\Sig^+$ by $$\pi\colon \Sig \to \Sig^+,$$ each $x \in \Sig$ corresponds to one possible sequence of preimages of $\pi(x) \in \Sig^+$ under $f$.

We will always assume that the adjacency matrix $T$ is \textit{primitive}, meaning that there exists $N>0$ such that all entries of $T^N$ are positive. The primitivity of $T$ is equivalent to the mixing property of the corresponding subshift of finite type $(\Sig,f)$.

Fix $\theta \in (0,1)$ and endow $\Sig$ with the metric $d$ defined as follows: for $x = (x_i)_{i \in \Z},y = (y_i)_{i \in \Z} \in \Sig$, we have
$$d(x,y)  = \theta^k,$$ 
where $k$ is the largest integer such that $x_i = y_i$ for all $|i| < k$. Equipped with such metric, the subshift of finite type $(\Sig,f)$ becomes a hyperbolic homeomorphism.

An \textit{admissible word of length $n$} is a word $i_0\ldots i_{n-1}$ with $i_j \in \{1,\ldots,q\}$ such that $T_{i_j,i_{j+1}} = 1$ for all $0 \leq j \leq n-2$.
Let $\L$ be the collection of all admissible words. For $\I\in \L$, we denote its length by $|\I|$.
For each $n \in \N$, let $\L(n) \subset \L$ be the set of all admissible words of length $n$. For any $\I =  i_0\ldots i_{n-1} \in \L(n)$, we define the associated \textit{cylinder} by
$$[\I]=[ i_0\ldots i_{n-1}]:=\{y \in \Sig \colon y_j = i_j \text{ for all } 0 \leq j \leq n-1\}.$$ 
For $x \in \Sig$ and $n \in \N$, we similarly define $$[x]_n:=\{y \in \Sig \colon y_i = x_i \text{ for all } 0 \leq i \leq n-1\}.$$ 
Using the superscript $w$, for each $x\in \Sig$, we denote the word $x_0\ldots x_{n-1}$ by 
$$[x]_n^w := x_0\ldots x_{n-1} \in \L(n).$$

We define the \textit{local stable set} $\Wloc^s(x)$ \textit{of $x \in \Sig$} by 
$$\Wloc^s(x):=\{y \in \Sig \colon x_i = y_i \text{ for all } i \geq 0\}.$$
In other words, $y \in \Sig$ belongs to $\Wloc^s(x)$ if the forward orbit of $y$ exponentially shadows the forward orbit of $x$, meaning that $d(f^nx,f^ny) \leq \theta^{n+1}$ for all $n \geq 0$. We extend the definition to define the \textit{stable set} $\W^s(x)$ \textit{of $x \in \Sig$} by 
$$\W^s(x):=\{y \in \Sig \colon f^ny \in \Wloc^s(f^nx) \text{ for some } n\geq 0\}.$$ The (local) stable set of $f^{-1}$ is called the \textit{(local) unstable set $\W^u$ of $f$}.

For any $x,y\in \Sig$ with $x_0 = y_0$, we say $y$ is in the \textit{local neighborhood} of $x$. For such $x$ and $y$, the following bracket operation 
\begin{equation}\label{eq: bracket}
[x,y]:=\Wloc^s(x) \cap \Wloc^u(y) \in \Sig
\end{equation}
is well-defined. 
From the definition, $[x,y]$ is the unique point in the local neighborhood of $x$ and $y$ that exponentially shadows the orbit of $x$ in the future and the orbit of $y$ in the past.

Recall from the introduction that to any dynamical system $(X,f)$ and $\Md$-valued function $\A$ on $X$, we associate a linear cocycle $F_\A$.
It is clear from the definition of $\A^n(\cdot)$ that the following cocycle equation holds:
$$\A^{n+m}(x) := \A^n(f^mx)\A^m(x) \text{ for all } n,m\in \N.$$

If the base system $(X,f)$ is invertible and the image of $\A$ is a subset of $\glr$, then we extend the definition to define $\A^{0}(\cdot) \equiv I$ and $\A^{-n}(x) : = \big(\A^n(f^{-n}x)\big)^{-1}$ for $n \in \N$ 
such that the cocycle equation holds for all $n,m \in \Z$. 

\begin{defn}\label{defn: loc constant}
We say $\A \colon \Sig \to \Md$ is \textit{locally constant} if there exists $k\in \N$ such that $\A(x)$ depends only on the word $x_{-k}\ldots x_k \in \L(2k+1)$ for every $x = (x_i)_{i \in \Z} \in \Sig$. A \textit{locally constant cocycle} $F_\A$ is a cocycle whose generator $\A$ is locally constant. \end{defn}

\begin{rem}\label{rem: loc constant}
For any locally constant function $\A$ on $\Sig$, there exists a re-coding of $\Sig$ to another subshift of finite type $\widetilde{\Sigma}_T$ such that $\A$ is carried to a function on $\widetilde{\Sigma}_T$ depending only on the 0-th entry $x_0$ of $x = (x_i)_{i\in \Z} \in \widetilde{\Sigma}_T$.

For simplicity, we assume that all locally constant functions considered in this paper are functions that depend only on the 0-th entry.
\end{rem}

\subsection{Holonomies and fiber-bunched cocycles}
Let $\A$ be an $\alpha$-\hol $\glr$-valued function on $\Sig$, meaning that there exists $C>0$ such that for all $x,y \in \Sig$,
$$\|\A(x)-\A(y)\|  \leq Cd(x,y)^\alpha,$$
where $\|\cdot \|$ is the standard operator norm. 

\begin{defn}\label{defn: hol rel}
A \textit{local stable holonomy} for $F_\A$ is a family of matrices $H^{s}_{x,y} \in \glr$ defined for any $x,y\in \Sig$ with $y \in \Wloc^s(x)$ such that
\begin{enumerate}
\item $H^s_{x,x} = I$ and $H^s_{y,z}\circ H^s_{x,y} = H^s_{x,z}$ for any $y,z \in \Wloc^s(x)$,
\item $\A(x) = H^s_{fy,fx} \circ \A(y) \circ H^s_{x,y}$,
\item $H^s\colon (x,y) \mapsto H^s_{x,y}$ is continuous.
\end{enumerate}
A \textit{local unstable holonomy} $H^u_{x,y}$ is likewise defined for $y \in \Wloc^u(x)$ satisfying the analogous properties above.
\end{defn}

We say that a stable/unstable holonomy $H^{s/u}$ is \textit{uniformly continuous} if for any $\ep>0$, there exists $\d>0$ such that for any $y \in \W^{s/u}(x)$, we have
$$d(x,y) \leq \d \implies \|H^{s/u}_{x,y} - I\| \leq \ep.$$

\begin{defn}
An $\alpha$-\hol function $\A$ is \textit{fiber-bunched} if for all $x\in \Sig$, we have
$$\|\A(x)\|\|\A(x)^{-1}\| \theta^\alpha <1,$$
where $\theta$ is the hyperbolicity constant defining the metric on the base $\Sig$. 

We denote the space of $\alpha$-\hol and \fb functions by $C^\alpha_b(\Sig,\glr)$, and say the cocycle $F_\A$ is \fb if its generator $\A$ belongs to $C^\alpha_b(\Sig,\glr)$.
\end{defn}

By projectivizing the action on the fibers, $F_\A$ gives rise to the \textit{projective cocycle} $\mathbb{P}(F_\A) \colon \Sig \times \mathbb{P}(\R^d) \to \Sig \times \mathbb{P}(\R^d)$. Then the fiber-bunching condition is equivalent to the condition that the rate of expansion (respectively, contraction) of the projective cocycle $\mathbb{P}(F_\A)$ at every point $x \in \Sig$ is bounded above by $1/\theta^{\alpha}$ (respectively, below by $\theta^\alpha$). In particular, the \hol continuity and the fiber-bunching assumption on $\A \in C^\alpha_b(\Sig,\glr)$ together ensure the convergence of the \textit{canonical stable/unstable holonomy} $H^{s/u}_{x,y}$: for any $y \in \Wloc^{s/u}(x)$, 
\begin{equation}\label{eq: hol}
H^s_{x,y} :=\lim\limits_{n \to \infty} \A^n(y)^{-1}\A^n(x) ~\text{ and }~H^u_{x,y}:= \lim\limits_{n \to -\infty} \A^n(y)^{-1}\A^n(x).
\end{equation}
See \cite{kalinin2013cocycles} or \cite{bonatti2003genericite} for details. 

A cocycle may admit multiple holonomies. However, when the cocycle is fiber-bunched, the canonical holonomies are unique in the sense that they are the only holonomies varying \hol continuously in the base points \cite{kalinin2013cocycles} with the same \hol exponent $\alpha$: there exists $C>0$ such that  
\begin{equation}\label{eq: hol holder}
\|H^{s/u}_{x,y} - I\| < Cd(x,y)^{\alpha},
\end{equation}
for any $y \in \Wloc^{s/u}(x)$. In particular, the canonical holonomies are uniformly continuous. We will always work with the canonical holonomies for fiber-bunched cocycles. 

\begin{rem}
It is worth noting a special family of cocycles trivially admitting the canonical holonomies. For any locally constant $\glr$-valued function $\A$, the canonical holonomies of $F_\A$ from \eqref{eq: hol} converge to the identity and satisfy the properties listed in Definition \ref{defn: hol rel}. 

The canonical holonomies of a fiber-bunched cocycle identify the fibers over points on the same (local) stable or unstable set, similar to how fibers over two nearby points can be trivially identified for locally constant cocycles.
\end{rem}

Using (2) of Definition \ref{defn: hol rel}, we can extend the definition of the local stable holonomy to the global stable holonomy $H^s_{x,y}$ for $y \in \W^s(x)$ not necessarily in the local stable set of $x$:
$$H^s_{x,y}:= \A^n(y)^{-1} H^s_{f^nx,f^ny}\A^n(x),$$
for some large enough $n \in \N$ so that $f^ny \in \Wloc^s(f^nx)$. We can likewise define the global unstable holonomy.  

A point $z \in \Sig$ is a \textit{homoclinic point} of a periodic point $p$ if $z \in \W^s(p) \cap \W^u(p) \setminus \{p\}$. The homoclinic points of $p$ are characterized as the points other than $p$ whose orbits synchronysly approach the orbit of $p$ in both forward and backward time. For a hyperbolic system such as $(\Sig,f)$, the set of homoclinic points of any periodic point is dense in $\Sig$.

\subsection{Typical cocycles}
We now formulate the assumptions building up to the main theorem. Consider any periodic point $p$ and a homoclinic point $z \in \W^s(p)\cap \W^u(p) \setminus{p}$. We define the \textit{holonomy loop} $\psi_p^z$ as the composition of the unstable holonomy from $p$ to $z$ and the stable holonomy from $z$ to $p$:
\begin{equation}\label{eq: original psi_p^z}
\psi_p^z  := H^s_{z,p}\circ H^u_{p,z}.
\end{equation}

The following definition is a slight weakening of the assumptions of Theorem 1 in \cite{bonatti2004lyapunov}. See Remark \ref{rem: BV comments}.
\begin{defn}\label{defn: 1-typical}
Let $\A \in C^\alpha_b(\Sig,\glr)$ and $H^{s/u}$ be the canonical holonomies for $F_\A$. We say that $\A$ is \textit{1-typical} if it satisfies the following two extra conditions:
\begin{enumerate}
\item[(A0)] there exists a periodic point $p$ such that $P:=\A^{\text{per}(p)}(p)$ has simple real eigenvalues of distinct norms. Let $\{v_i\}_{1\leq i \leq d}$ be the eigenvectors of $P$.
\item[(B0)] there exists a homoclinic point $z$ of $p$ such that $\psi_p^z$ twists the eigenvectors of $P$ into general position: 
for any $1 \leq i,j \leq d$, the image $\psi_{p}^z(v_i)$ does not lie in any hyperplane $\WW_j$ spanned by all eigenvectors of $P$ other than $v_j$. Equivalently, the coefficients $c_{i,j}$ in 
$$\psi_{p}^z(v_i) = \sum\limits_{1 \leq j \leq d}c_{i,j} v_j,$$
are nonzero for all $1 \leq i,j \leq d$.
\end{enumerate}
\end{defn}
\begin{rem}
We will often refer (A0) and (B0) by \textit{pinching} and \textit{twisting} conditions, respectively. 
\end{rem}


For each $1 \leq t \leq d$, we denote by $\A^{\wedge t}$ the action of $\A$ on the exterior product $(\R^d)^{\wedge t}$.
See subsection \ref{subsection: exterior algebra} for basic properties of the exterior product.
From the canonical holonomies $H^{s/u}$ for $F_\A$, the cocycles generated by $\A^{\wedge t},~1\leq t\leq d$ also admit stable and unstable holonomies, namely $(H^{s/u})^{\wedge t}$. So, for a 1-typical function $\A$, we consider similar conditions appearing in Definition \ref{defn: 1-typical} on $\A^{\wedge t}$.

\begin{defn}\label{defn: t-typical} Let $\A $ be 1-typical.
For $2 \leq t \leq d-1$, we say $\A$ is \textit{t-typical} if the same points $p,z \in \Sig$ from Definition \ref{defn: 1-typical} satisfy 
\begin{enumerate}
\item[(A1)] all the products of $t$ distinct eigenvalues of $P$ are distinct;
\item[(B1)] the induced map $(\psi_p^z)^{\wedge t} $ on $(\R^d)^{\wedge t}$ satisfies the analogous statement to (B0) from Definition \ref{defn: 1-typical} with respect to the eigenvectors $\{v_{i_1}\wedge \ldots \wedge v_{i_t}\}_{1\leq i_1<\ldots<i_t \leq d}$ of $P^{\wedge t}$.
\end{enumerate}
\end{defn}

\begin{rem}
Notice that the definition of $t$-typicality only asks for (A1) and (B1); the definition does not require that $\A^{\wedge t}$ also be fiber-bunched. 

On the other hand, we will use the fact that the stable and unstable holonomies $(H^{s/u})^{\wedge t}$ are uniformly continuous. This follows from the \hol continuity \eqref{eq: hol holder} of the canonical holonomies $H^{s/u}$ for $F_\A$.   
\end{rem}

\begin{defn}\label{defn: typical}
We say $\A$ is \textit{typical} if $\A$ is $t$-typical for all $1\leq t \leq d-1$. We denote $\U \subset C^\alpha_b(\Sig,\glr)$ to be the set of all typical functions.
\end{defn}
\begin{rem}\label{rem: BV comments}
A few comments regarding the assumptions are in order. First, similar (but slightly stronger) assumptions are introduced in Bonatti and Viana \cite{bonatti2004lyapunov} as a sufficient condition to establish the simplicity of the Lyapunov exponents of $F_\A$ for any ergodic $f$-invariant measure with continuous local product structure. Their setting is $\slr$-valued cocycles, and they also show that $\U$ is open and dense in $C^\alpha_b(\Sig,\slr)$. We remark that the difference in the settings ($\slr$ for \cite{bonatti2004lyapunov} and $\glr$ for this paper) does not cause any issues in translating the relevant statements and results from \cite{bonatti2004lyapunov} to this paper.

Avila and Viana in \cite{avila2006simplicity} improved the result by removing the assumption on the exterior powers and allowing the number of symbols of $\Sig$ to be countably infinite. Under many different settings, such assumptions serve as checkable conditions to establish the simplicity of the Lyapunov exponents; see \cite{backes2016simplicity} for instance. Our twisting condition (B1) on $\psi_p^z$ is weaker than both \cite{bonatti2004lyapunov} and \cite{avila2006simplicity}, but we still require the assumption on the simplicity of the eigenvalues of $P^{\wedge t}$ for all $1 \leq t \leq d-1$. In all cases, such assumptions are satisfied by an open and dense subset $\U$ of maps in $C^\alpha_b(\Sig,\glr)$, and the complement of $\U$ has infinite codimension.
\end{rem}

\begin{rem}\label{rem: homoclinic pts}
If $z$ is a homoclinic point of $p$, then $f^rz$ for any $r \in \Z$ is also a homoclinic point of $p$. Their holonomy loops are conjugated by $P^r$:
$$P^r\psi_p^z = \psi_p^{f^rz} P^r.$$ 
It then follows that if the twisting condition (B0) holds at $z$, then it also holds at $f^rz$.

Suppose $z$ is a homoclinic point of $p$ on $\Wloc^u(p)$ and $f^\ell z \in \Wloc^s(p)$ for some $\ell \in \N$. From Proposition \ref{defn: hol rel}, $\psi_p^z$ satisfies the relation
\begin{equation}\label{eq: psi_p^z}
\psi_p^z=P^{-\ell} \circ H^s_{f^\ell z,p}\circ\A^\ell(z) \circ H^u_{p,z}.
\end{equation}
\end{rem}

\subsection{Quasi-multiplicativity and the main theorem}

In order to state the main theorem, we need to introduce the notion of quasi-multiplicativity. Recalling that $\L$ is the set of all admissible words,
a function $\psi \colon \L \to \R_{\geq 0}$ is \textit{submultiplicative} if 
$$\psi(\I) \psi(\J) \geq \psi(\I\J)$$
for all $\I,\J \in \L$ with $\I\J \in \L$. 
Let $\D$ be the set of non-negative and submultiplicative functions on $\L$: 
$$\D =\{\psi \colon \L \to \R_{\geq 0}  \colon \psi \text{ is submultiplicative}\}.$$

\begin{defn}\label{defn: qm}
A non-negative and submultiplicative function $\psi \in \D$ is \textit{quasi-multiplicative} if there exist constants $c>0$ and $k\in\N$ such that for any words $\I,\J \in \L$, 
there exists $\K = \K(\I,\J) \in \L$ with $|\K| \leq k$ such that $\I\K\J \in \L$ and 
\begin{equation}\label{eq: qm}
\psi(\I\K\J) \geq c \psi(\I) \psi(\J).
\end{equation}
\end{defn}


\begin{rem}
Suppose $\psi \colon \L \to \R_{\geq 0}$ is not submultiplicative, but still satisfies the following weaker property: there exists $C\geq 1$ such that for all $\I,\J \in \L$, we have
\begin{equation}\label{eq: quasi-submult}
 C \psi(\I)\psi(\J) \geq \psi(\I\J).
\end{equation}

Then, $ C\psi$ is submultiplicative, and we can consider quasi-multiplicativity of the function $C\psi$. For such $\psi$, \eqref{eq: qm} and \eqref{eq: quasi-submult} together resemble the usual notion of a quasimorphism for the function $\log \psi$. 

However, we are mainly interested in the singular value potentials (see Section 3 for the definition), which are automatically submultiplicative. Hence, we have stated the definition of quasi-multiplicativity for submultiplicative functions. 
\end{rem}

Consider a family of quasi-multiplicative functions on $\L$. If they all admit uniform constants $c>0$ and $k \in \N$ as well as the common connecting word $\K = \K(\I,\J)$ for any $\I,\J \in \L$, then it would make sense to consider such functions as being simultaneously quasi-multiplicative.

\begin{defn}
Let $\mathcal{I}$ be an index set. A family of non-negative and submultiplicative functions $\psi^{(i)}\in\D$, $i \in \mathcal{I}$ are \textit{simultaneously quasi-multiplicative} if there exist constants $c>0$ and $k\in\N$ such that for any words $\I,\J \in \L$, there exists a word $\K = \K(\I,\J) \in \L$ with $|\K| \leq k$ such that $\I\K\J \in \L$ and 
$$\psi^{(i)}(\I\K\J) \geq c \psi^{(i)}(\I) \psi^{(i)}(\J),$$
for all $i \in \mathcal{I}$.
\end{defn}
 
We are most interested in quasi-mutiplicativity of the singular value functions related to a cocycle $F_\A$.
The \textit{singular values} of $A \in M_{d \times d}(\R)$ are eigenvalues of $\sqrt{A^*A}$. We define the \textit{singular value function} $\vps \colon \Md \to \R$ with parameter $s\geq 0 $ as follows:
$$\vps(A) = \begin{cases} 
      \alpha_1(A)\ldots\alpha_{\lfloor s \rfloor}(A)\alpha_{\lceil s \rceil}(A)^{\{s\}} & 0\leq s \leq d ,\\
      |\det(A)|^{s/d} & s>d,
   \end{cases}$$
where $\alpha_1(A) \geq \ldots \geq \alpha_d(A) \geq 0$ are the singular values of $A$. 
It is well-known that $\vps$ is submultiplicative for all $s$: for any $A,B \in \Md$ and $s \in [0,\infty)$,
\begin{equation}\label{eq: submult}
 \vps(A) \vps(B) \geq \vps(AB).
\end{equation}
Moreover, the function $(A,s) \mapsto \vps(A)$ is upper semi-continuous, and has a discontinuity at $s=k \in \N$ only when there is a jump in the singular values of the form $\alpha_{k-1}(A)>\alpha_k(A) = 0$. In particular, if $A$ takes value in $\glr$, then $\vps(A)$ is continuous in both $A$ and $s$. 

For any $\A \colon \Sig \to \glr$ and $s \geq 0$, we can associate them to a non-negative function (which we also call the singular value function) on $\L$ denoted by $\widetilde{\vphi}^s_\A \in \D$ as follows: for any $n \in \N$ and $\I \in \L(n)$, we define
\begin{equation}\label{eq: simplification 1}
\widetilde{\vphi}^s_\A(\I) := \max\limits_{x \in [\I]} \vps( \A^n(x)).
\end{equation}
Notice that this definition is similar to how we define $\|\A(\I)\|$ in the introduction \eqref{eq: A(I)}.
From the submultiplicativity of $\vphi^s$, it follows that $\widetilde{\vphi}^s_\A$ is also submultiplicative. We are now ready to state the main theorem of the paper.
\begin{thmx}\label{thm: E}
Let $\A \in \U$ be typical. Then the singular value functions $\widetilde{\vphi}^s_\A$ with $s  \in [0,d]$ are simultaneously quasi-multiplicative. Moreover, the constants $c,k$ can be chosen uniformly in a neighborhood of $\A$ in $\U$.
\end{thmx}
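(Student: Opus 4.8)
The plan is to reduce the statement to a single assertion about products of matrices at the level of the exterior powers, and then exploit the pinching and twisting data $(p,z)$ of a typical cocycle to manufacture the connecting word $\K$. First I would record the following reduction: since $\vphi^s(A)$ is, for $s\in[0,d]$, a product of the quantities $\vphi^t(A)=\|A^{\wedge t}\|$ with $t=\lfloor s\rfloor,\lceil s\rceil$ (together with a fractional exponent), it suffices to prove that the functions $\I\mapsto\|\A^{\wedge t}(\I)\|$, $t\in\{1,\dots,d-1\}$, are \emph{simultaneously} quasi-multiplicative, with the same $\K=\K(\I,\J)$ working for every $t$; the case $s\in[0,1]$ is Theorem~\ref{thm: A} and the endpoints $s=0,d$ are trivial (constant, resp. governed by $|\det|$, which is genuinely multiplicative). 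So the whole theorem is Theorem~\ref{thm: E} $\Leftrightarrow$ a statement about the finitely many cocycles $\A^{\wedge t}$ at once. Because only finitely many $t$ are involved, once I have a mechanism producing, for each pair $\I,\J$, a bounded word $\K$ that simultaneously connects the "most expanded" directions of $\A^{\wedge t}(\I)$ to the "most expanded" directions of $\A^{\wedge t}(\J)$ for all $t$, I am done; the uniformity of $c,k$ in a neighborhood will come for free because all the estimates below are open conditions.

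The core mechanism is the classical observation that $\|\A^{\wedge t}(\I\K\J)\|\ge \|\A^{\wedge t}(\I\K\J)\,\xi\|$ for any unit $t$-vector $\xi$, so it is enough to choose $\K$ so that the image under $\A^{\wedge|\J|}(y)$ of a near-maximal-expansion direction for $\J$ (for $y\in[\J]$), after being transported by $\A^{\wedge|\K|}$, has a definite-size component along a near-maximal-expansion direction for $\I$ inside $(\R^d)^{\wedge t}$; then submultiplicativity in the other direction (inequality \eqref{eq: submult}) gives the matching upper bound and hence \eqref{eq: qm}. Here the pinching point $p$ enters: iterating $\A^{\mathrm{per}(p)}(p)=P$ forward and backward aligns things with the eigenbasis $\{v_{i_1}\wedge\cdots\wedge v_{i_t}\}$ of $P^{\wedge t}$ — a Perron–Frobenius-type domination that holds simultaneously for all $t$ because $P^{\wedge t}$ has simple spectrum by (A0)/(A1). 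Concretely I would: (1) fix a large integer $N$; (2) given $\I,\J$, use mixing of $\Sigma$ to append short bridge words so that $\I$ is followed by $N$ copies of the period word of $p$, a short holonomy-realizing excursion along the homoclinic orbit $z$, another $N$ copies of the period word, and then $\J$ — i.e. $\K$ is (bridge)$\cdot p^N\cdot(\text{homoclinic excursion})\cdot p^N\cdot$(bridge), of length bounded by $k:=k(N)$. The forward blocks $p^N$ rotate any direction with a nonzero $v_d$-component (resp. $v_{i_1}\wedge\cdots\wedge v_{i_t}$-component with indices as large as possible) to within $O(\rho^N)$ of the top eigendirection of $P^{\wedge t}$; the homoclinic excursion applies, up to bounded holonomy corrections, the holonomy loop $\psi_p^z$, whose twisting property (B0)/(B1) guarantees that the top eigendirection of $P^{\wedge t}$ is sent to a vector with \emph{nonzero} component along \emph{every} eigendirection, in particular along the bottom one — and crucially this nonvanishing is uniform in $t$ and bounded below by a constant depending only on $\A$.

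Assembling these: for $x\in[\I\K\J]$ write $\A^{\wedge t}(x)=\A^{\wedge t, \text{head}}\circ (P^{\wedge t})^{N}\circ(\text{holonomy loop part})\circ (P^{\wedge t})^{N}\circ \A^{\wedge t,\text{tail}}$ up to uniformly bounded factors coming from the bridge words and the comparison between the genuine cocycle along the homoclinic orbit and the idealized $\psi_p^z$ (this comparison is exactly \eqref{eq: psi_p^z} in Remark~\ref{rem: homoclinic pts}, and the holonomies are uniformly continuous by \eqref{eq: hol holder}). Choosing the near-optimal input direction for $\J$ and reading off the component along the near-optimal output direction for $\I$, the twisting constant and the domination estimate combine to give $\|\A^{\wedge t}(\I\K\J)\|\ge c\,\|\A^{\wedge t}(\I)\|\,\|\A^{\wedge t}(\J)\|$ with $c=c(N,\A)>0$, for all $t$ simultaneously and with a single $\K$. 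For $s\in[0,d]$ multiply the $t$-estimates with the appropriate exponents. Uniformity in a neighborhood of $\A$ in $\U$: the periodic orbit $p$, the homoclinic point $z$, the nonvanishing of the twisting coefficients $c_{i,j}$, and the spectral gaps of $P^{\wedge t}$ are all continuous/open data, and the holonomies depend continuously on $\A$, so the same $p,z,N$ and a slightly smaller $c$ work on a neighborhood.

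I expect the main obstacle to be the \emph{simultaneity over $t$} together with making the twisting constant quantitative and $t$-uniform: I must verify that a \emph{single} homoclinic excursion realizes a loop whose $\wedge t$-induced maps are, for every $t$, in "general position" relative to the eigenbasis of $P^{\wedge t}$ — this is precisely what Definition~\ref{defn: t-typical} (B1) was engineered to give — and that the holonomy corrections and bridge-word distortions, which a priori grow with $N$, can be absorbed into the constant $c$ rather than into the exponential factors. The cleanest way around this is to fix $N$ once (independently of $\I,\J$) so that the domination beats the twisting constant, making $c$ and $k$ depend only on $\A$; then every remaining inequality is a bounded comparison, and the neighborhood-uniformity is immediate from continuity of all the ingredients.
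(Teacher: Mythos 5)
Your overall skeleton (reduce to the exterior powers $\A^{\wedge t}$, $t=1,\dots,d-1$, interpolate for non-integer $s$, push the $\I$-side singular direction toward the top eigendirection of $P$, the $\J$-side via the adjoint toward $w_1$, and use the angle gap $\mangle(v_1,\WW_1)>0$ plus bounded holonomy corrections) matches the paper, but the core alignment step has a genuine gap. You propose a connecting word of the fixed shape $(\text{bridge})\cdot p^N\cdot(\text{homoclinic excursion})\cdot p^N\cdot(\text{bridge})$ with $N$ chosen once, claiming that $P^N$ (resp.\ $(P^{\wedge t})^N$) brings the incoming singular direction $u(\A^{|\I|+\bar\tau+m}(\bar\omega_0))$ uniformly close to the top eigendirection. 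This fails: that direction depends on $\I$, ranges over all of $\mathbb{P}(\R^d)$ as $\I$ varies, and can lie in or arbitrarily near the $P$-invariant hyperplane $\WW_1$, in which case no fixed power of $P$ aligns it with $v_1$ at a rate uniform over all words $\I$; consequently your constant $c=c(N,\A)$ cannot be chosen uniformly. Nor can you first apply the twisting loop to fix this, because (B0)/(B1) only give a uniform twisting angle on small cones around the finitely many eigendirections (the preimages under $\psi_p^z$ of the hyperplanes $\WW_j$ are themselves hyperplanes, so no uniform "general position" statement holds for arbitrary directions). The paper's resolution is precisely Lemma \ref{lem: turn}: for every direction there is a direction-dependent but uniformly bounded $n\leq N$ with $P^n v$ near \emph{some} eigendirection $v_i$ (not necessarily $v_1$); only then does one homoclinic excursion give a uniform twist off all hyperplanes, after which a \emph{fixed} power $P^\ell$ lands in $\CC(v_1,\ep_0/2)$. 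Your word omits this variable-length alignment block, and "fix $N$ once so that domination beats the twisting constant" does not repair it.

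The same omission undermines your treatment of simultaneity over $t$. Because the alignment time $n(v)$ from Lemma \ref{lem: turn} depends on the direction, the directions attached to the different exterior powers generally require different alignment times, so a single block $p^N\cdot(\text{excursion})\cdot p^N$ cannot serve all $t$ at once. The paper handles this in Theorem \ref{thm: qm general} by an induction on $t$: the connecting orbit makes $d-1$ successive passes, each consisting of an alignment segment of length $\leq N$ near $p$ followed by one homoclinic excursion, bringing one more exterior power's direction into the cone $\CC(v_1^{(t)},\ep_0/2)$ while Lemma \ref{lem: lin algebra immediate} (together with the smallness of the rectangle holonomies $R$) guarantees the previously aligned ones stay put; this is why $k$ is of order $\kappa(N+\ell)$ rather than your single $2N+\ell$. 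Your interpolation to non-integer $s$, the trivial cases $s=0,d$, and the neighborhood-uniformity of $c,k$ (openness/continuity of the pinching, twisting, and holonomy data, with the caveat that the connecting word itself is not stable under perturbation) are all in line with the paper, but as written the construction of $\K$ does not yield a uniform $c$.
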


\begin{rem}\label{rem: comments on qm main}
We make a few remarks on Theorem \ref{thm: E}.
In the statement of Theorem \ref{thm: E}, we note that the parameter $s$ of the singular value function $\widetilde{\varphi}^s_\A$ varies only within the range $[0,d]$. This is mainly due to two reasons: first, the singular value function $\varphi^s$ takes a particularly simple form when $s > d$, and second, it suffices to consider $s \in [0,d]$ in the applications appearing in Section 5. 
If the parameter $s$ were to vary within $[0,s_0]$ for some $s_0 \in \R^+_0$, then the theorem still remains true except that the constant $c$ will have to change depending on $s_0$.
We also note that the theorem is not necessarily true if we consider simultaneous quasi-multiplicativity of $\widetilde{\varphi}^s_\A$ over the range $[0,\infty)$ for the parameter $s$. See Remark \ref{rem: s_0 for all R+} for further comments.

Lastly, note that Theorem \ref{thm: A} is an immediate corollary of Theorem \ref{thm: E}. The proof of Theorem \ref{thm: E} appears in Section 4. 
\end{rem}

\section{Thermodynamic formalism}

\subsection{Additive thermodynamic formalism}
Let $f$ be a continuous map on a compact metric space $X$. A \textit{potential} on $X$ is a continuous function $\psi \colon X \to \R$. 

A subset $E \subset X$ is $(n,\ep)$\textit{-separated} if every pair of distinct $x,y\in E$ satisfies $$d_n(x,y):=\max\limits_{0 \leq i \leq n-1} d(f^ix,f^iy) \geq \ep.$$ Using $(n,\ep)$-separated subsets, we can define a thermodynamical object called the \textit{pressure} $\P(\psi)$ of $\psi$ as follows: 
$$\P(\psi) := \lim\limits_{\ep \to 0} \limsup\limits_{n \to \infty} \frac{1}{n}\log \sup\Big\{ \sum\limits_{x \in E}  e^{S_n\psi(x)} \colon E \text{ is an }(n,\ep)\text{-separated subset of }X \Big\},$$
where $S_n\psi = \psi+\psi\circ f +\ldots +\psi\circ f^{n-1}$.

When $\psi \equiv 0$, the pressure $\P(0)$ is equal to the \textit{topological entropy} $h(f)$, which measures the complexity of the system $(X,f)$. 

Denoting the space of $f$-invariant probability measures on $X$ by $\M(f)$, the pressure satisfies the \textit{variational principle}:
$$\P(\psi) = \sup\Big\{h_\mu(f)+ \int \psi d\mu \colon \mu \in \M(f) \Big\},$$
where $h_\mu(f)$ is the measure-theoretic entropy. See \cite{walters2000introduction}.

Any invariant measure $\mu \in \M(f)$ achieving the supremum in the variational principle is called an \textit{equilbrium state} of $\psi$. 
If the entropy map $\mu \mapsto h_{\mu}(f)$ is upper semi-continuous, then any potential has an equilibrium state. 
However, the existence, the finiteness, or the uniqueness of the equilibrium state for a given potential is a subtle question that depends on the system $(X,f)$ as well as the potential $\psi$.

On the other hand, there are specific settings where such questions have an affirmative answer. When $(X,f)$ is a mixing hyperbolic system such as $(\Sig,f)$, and the potential $\psi$ is H\"older, then the result of Bowen \cite{bowen1974some} states that there exists a unique equilibrium state $\mu_\psi$, which has the Gibbs property.
\begin{prop}\label{prop: bowen} 
Let $(\Sig,f)$ be a mixing subshift of finite type, and $\psi$ be H\"older continuous. Then there exists a unique equilibrium state $\mu_\psi$ of $\psi$, characterized as the unique $f$-invariant measure satisfying the Gibbs property: there exists $C \geq 1$ such that for any $n\in \N$ and $\I \in \L(n)$, we have
\begin{equation}\label{eq: gibbs ineq}
C^{-1} \leq  \frac{\mu_{\psi}(\I)}{e^{-n\P(\psi)+S_n
\psi(x)}}\leq C
\end{equation}
for any $x \in \I$.
\end{prop}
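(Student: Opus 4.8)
Since this is the classical Ruelle--Perron--Frobenius theorem for subshifts of finite type, one option is simply to invoke \cite{bowen1974some} or \cite{walters2000introduction}; but let me outline the standard transfer-operator argument. The plan is, first, to reduce to the one-sided shift: by Sinai's cohomology lemma there is a H\"older $u$ on $\Sig$ with $\widetilde\psi:=\psi+u-u\circ f$ depending only on the coordinates $(x_i)_{i\ge 0}$, hence factoring through $\pi\colon\Sig\to\Sig^+$. Since cohomologous potentials have Birkhoff sums differing only by the uniformly bounded term $u-u\circ f^n$, the pressure is unchanged, $\mu\mapsto\pi_*\mu$ is an entropy-preserving bijection of invariant measures, and the Gibbs property \eqref{eq: gibbs ineq} on $\Sig$ for $\psi$ transfers to the one-sided Gibbs property for $\widetilde\psi$ after adjusting $C$. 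So it suffices to treat a H\"older potential on the mixing one-sided SFT $(\Sig^+,f)$.

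Next I would introduce the transfer operator $\mathcal{T}_\psi$ on $C(\Sig^+)$, $(\mathcal{T}_\psi g)(x)=\sum_{fy=x}e^{\psi(y)}g(y)$, and record the bounded-distortion estimate $|S_n\psi(y)-S_n\psi(y')|\le C_0$ for $y,y'$ in a common length-$n$ cylinder, which is where H\"older continuity is used. Invoking the Ruelle--Perron--Frobenius theorem --- proved via a Hilbert-metric cone contraction that relies on primitivity of $T$ --- I get a simple eigenvalue $\lambda>0$, a strictly positive H\"older eigenfunction $h$ with $\mathcal{T}_\psi h=\lambda h$, a probability measure $\nu$ with $\mathcal{T}_\psi^*\nu=\lambda\nu$ and $\int h\,d\nu=1$, and exponential convergence $\lambda^{-n}\mathcal{T}_\psi^n g\to h\int g\,d\nu$. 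Setting $d\mu:=h\,d\nu$, the identity $\mathcal{T}_\psi((g\circ f)h)=g\,\mathcal{T}_\psi h$ shows $\mu$ is $f$-invariant.

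The Gibbs property then comes out of the eigenmeasure relation: for $\I\in\L(n)$ and $x\in[\I]$, applying $\mathcal{T}_\psi^{*n}\nu=\lambda^n\nu$ to $\mathbf 1_{[\I]}$ and using bounded distortion, together with primitivity to produce a length-$n$ preimage in $[\I]$ over a set of positive $\nu$-measure, yields $\nu([\I])\asymp\lambda^{-n}e^{S_n\psi(x)}$; since $h$ is bounded away from $0$ and $\infty$, the same holds for $\mu$. Summing over $\I\in\L(n)$ and comparing with $(n,\ep)$-separated sets of cylinder representatives identifies $\log\lambda=\P(\psi)$, so $\mu$ satisfies \eqref{eq: gibbs ineq}. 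To see $\mu$ is \emph{the} equilibrium state: a Gibbs measure on a mixing SFT is ergodic, so Shannon--McMillan--Breiman and the Gibbs bound give $h_\mu(f)=\P(\psi)-\int\psi\,d\mu$, i.e.\ $\mu$ is an equilibrium state; and any equilibrium state $\mu'$ must, by the upper Gibbs bound together with the entropy formula and the variational principle, be mutually absolutely continuous with $\mu$ with densities bounded above and below, whence $\mu'=\mu$ by ergodicity. The same cylinder comparison shows any invariant measure satisfying \eqref{eq: gibbs ineq} equals $\mu$, giving the claimed characterization.

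The main obstacle is the Ruelle--Perron--Frobenius step --- establishing the simple leading eigenvalue with a spectral gap and the exponential convergence $\lambda^{-n}\mathcal{T}_\psi^n g\to h\int g\,d\nu$ --- which is exactly the point at which primitivity of $T$ (equivalently, mixing of $(\Sig,f)$) is indispensable: it is what makes a suitable iterate of $\mathcal{T}_\psi$ map the cone of positive log-H\"older functions strictly inside itself with finite Hilbert-metric diameter. Everything else --- the one-sided reduction, the Gibbs estimate, the identification of the pressure, and the uniqueness argument --- is routine bounded-distortion bookkeeping once that spectral input is in hand.
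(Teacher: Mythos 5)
The paper does not prove this proposition at all: it is stated as classical background and attributed to Bowen, with \cite{bowen1974some} cited, so there is no in-paper argument to compare against line by line. Your outline is the standard Ruelle--Perron--Frobenius route (Sinai reduction to the one-sided shift, transfer operator, spectral gap via a cone/Hilbert-metric argument using primitivity, Gibbs estimate from the eigenmeasure relation and bounded distortion, identification of $\log\lambda$ with $\P(\psi)$, uniqueness via ergodicity), and it is a correct proof of the statement as used in the paper. Two small remarks. First, the reference the paper actually cites proves uniqueness through expansiveness and the specification property rather than through transfer operators; your spectral approach is the one in Bowen's lecture notes and in Ruelle's work, so strictly speaking you are following a different (equally classical) route than the cited source. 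Second, in the uniqueness step your phrase that an equilibrium state $\mu'$ is ``mutually absolutely continuous with $\mu$ with densities bounded above and below'' overstates what the standard estimate yields directly: comparing $h_{\mu'}(f)+\int\psi\,d\mu'$ with $\P(\psi)$ cylinder by cylinder via the upper Gibbs bound gives a uniform bound on the relative entropy of $\mu'$ with respect to $\mu$ on the partitions into $n$-cylinders, which implies $\mu'\ll\mu$ (after reducing to ergodic $\mu'$ by ergodic decomposition), and then ergodicity of $\mu$ forces $\mu'=\mu$; the two-sided density bound is a conclusion, not an intermediate step. This is an imprecision in wording rather than a gap, and the overall argument is sound.
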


\begin{rem}\label{rem: gibbs bdd distortion}
One necessary condition for the Gibbs property \eqref{eq: gibbs ineq} to hold is that the variation within each cylinder should be uniformly bounded: there exists a constant $C\geq 0$ such that for every $n \in \N$ and $\I \in \L(n)$,
\begin{equation}\label{eq: bounded distortion for additive potential}
|S_n\psi(x) -S_n\psi(y)|\leq C
\end{equation}
for every $x,y\in \I$. We denote this property by \textit{bounded distortion}.
\end{rem}

In the setting of Bowen's theorem, the hyperbolicity of the system and the \hol regularity of the potential guarantee the bounded distortion property. 

\subsection{Subadditive thermodynamic formalism}
The additive theory of thermodynamic formalism extends to the subadditive theory with suitable generalizations. 
A sequence of continuous functions $\{\psi_n\}_{n \in \N}$ on $\Sig$ is \textit{submultiplicative} if each $\psi_n$ is a non-negative function on $\Sig$ satisfying
$$0 \leq \psi_{m+n} \leq \psi_n \cdot  \psi_m\circ f^n, \text{ for all }m,n \in \N.$$
If we set $\Psi = \{\log \psi_n\}_{n \in \N}$, then $\Psi$ becomes a \textit{subadditive} sequence of functions on $\Sig$. We will consider such $\Psi$ obtained from a submultiplicative sequence $\{\psi_n\}_{n \in \N}$ as a \textit{subadditive potential} on $\Sig$. A natural example of a subadditive potential is a \textit{singular value potential} of a continuous $\glr$-valued function $\A$ on $\Sig$: for $s \geq 0$, we define
$$\Phi_\A^s:=\{\log\vps(\A^n(\cdot))\}_{n \in \N}.$$

We define the \textit{subadditive pressure} of a subadditive potential $\Psi = \{\log \psi_n\}_{n \in \N}$ as
\begin{equation}\label{eq: pressure}
\P(\Psi) := \lim\limits_{\ep \to 0}\limsup\limits_{n \to  \infty} \frac{1}{n} \log \sup\Big\{ \sum\limits_{x \in E}  \psi_n(x) \colon E \text{ is an }(n,\ep)\text{-separated subset of }\Sig \Big\},
\end{equation}
where the existence of the limit is guaranteed from the subadditivity of $\Psi$. 

There are a few different generalizations of the additive notion of the pressure to the subadditive setting: Barreira \cite{barreira1996non} defines the subadditive pressure by open covers while Cao, Feng, and Huang \cite{cao2008thermodynamic} define it using Bowen balls. Our definition of the subadditive pressure \eqref{eq: pressure} is based on \cite{cao2008thermodynamic}. See also \cite{falconer1988subadditive}. It is not known whether two definitions of the subadditive pressure are equal in general, but there are known settings in which two definitions agree. In particular, it is shown in \cite{cao2008thermodynamic} that two notions are equivalent when the entropy map $\mu \mapsto h_\mu(f)$ is upper semi-continuous, which includes our setting of mixing subshifts of finite type $(\Sig,f)$.

Cao, Feng, and Huang \cite{cao2008thermodynamic} also establish the \textit{subadditive variational principle}: 
\begin{equation}\label{eq: var prin}
\P(\Psi) = \sup \Big\{ h_\mu(f)+\F(\Psi,\mu) \colon \mu \in \M(f), ~\F(\Psi,\mu) \neq -\infty \Big\},
\end{equation}
where $$\F(\Psi,\mu) := \lim\limits_{n \to \infty} \frac{1}{n} \int \log\psi_n ~d\mu = \inf\limits_{n  \to \infty}\frac{1}{n} \int \log\psi_n ~d\mu,$$
whose limit is again guaranteed from the subadditivity of $\Psi$. 

Similar to the additive setting, any invariant measure $\mu \in \M(f)$ achieving the supremum in \eqref{eq: var prin} is called an \textit{equilibrium state} of $\Psi$. Also, at least one equilibrium state necessarily exists for any subadditive potential $\Psi$ if the entropy map $\mu \mapsto h_\mu(f)$ is upper semi-continuous \cite{feng2011equilibrium}.
See also \cite{kaenmaki2003natural}. 

Recall that $\D$ is the set of non-negative and submultiplicative functions on $\L$.
For any submultiplicative sequence $\{\psi_n\}_{n \in \N}$ on $\Sig$, we associate a function $\psi \in \D$ similar to \eqref{eq: A(I)} and \eqref{eq: simplification 1}: for any $n\in \N$ and $\I \in \L(n)$, let
\begin{equation}\label{eq: simplification 2}
\psi(\I):=\max\limits_{x \in [\I]}\psi_n(x).
\end{equation}
Hence, we can extend the notion of quasi-multiplicativity to submultiplicative sequences as follows.
\begin{defn}\label{defn: qm for subadditive}
We say that a submultiplicative sequence of continuous functions $\{\psi_n\}_{n \in \N}$ on $\Sig$ (or its associated subadditive potential $\Psi = \{\log \psi_n\}_{n \in \N}$) is \textit{quasi-multiplicative} if the function $\psi \in \D$ obtained from $\{\psi_n\}_{n \in \N}$ by $\eqref{eq: simplification 2}$ is quasi-multiplicative in the sense of Definition \ref{defn: qm}.

We say that $\A \colon \Sig \to \glr$ is \textit{quasi-multiplicative} if its singular value potential $\Phi_\A^1$ is quasi-multiplicative. This agrees with the definition of quasi-multiplicativity \eqref{eq: qm intro} of $\A$ from the introduction. 
\end{defn}

Conversely, for any $\psi \in \D$, we can associate a subadditive potential $\Psi = \{\log \psi_n\}_{n \in \N}$ in an obvious way:  
\begin{equation}\label{eq: psi_n from psi}
\psi_n(x) := \psi([x]_n^w),
\end{equation}
Hence, we can consider the pressure and the equilibrium states of functions in $\D$.

In the following subsection, we will discuss a sufficient condition for quasi-multiplicativity of locally constant cocycles as well as some of its consequences.

%

\subsection{Bowen's theorem for subadditive potentials}
In this subsection, we show that Bowen's theorem (Proposition \ref{prop: bowen}) remains to hold (with suitable generalizations) for subadditive potentials with quasi-multiplicativity.

For subadditive potentials, equilibrium states are often not unique, and such examples can be found where the subadditive potential is given by the singular value potential of some $\Md$-valued function. See \cite{feng2010equilibrium}, for instance.


More specifically, consider a subadditive potential $\Psi$ obtained from $\psi \in \D$ by \eqref{eq: psi_n from psi}. Alternatively, we can characterize such $\Psi = \{\log \psi_n\}_{n\in\N}$ by the condition that for any $n \in \N$ and $\I \in \L(n)$, 
\begin{equation}\label{eq: psi for loc const}
\psi_n(x) = \psi_n(y) \text{ for all } x,y\in [\I].
\end{equation}
Such $\Psi$ can be thought of as a subadditive potential with zero variation within cylinders. An example of such $\Psi$ is the singular value potential $\Phi_\A^s$ for locally constant $\glr$-valued functions $\A$. 

The main consequence of quasi-multiplicativity of $\psi\in \D$ is the uniqueness of the equilibrium state for the corresponding subadditive potential $\Psi$. 
\begin{prop}\cite[Theorem 5.5]{feng2011equilibrium}\label{prop: unique eq as gibbs} 
Let $\psi \in \D$ be quasi-multiplicative. Then the associated subadditive potential $\Psi =\{ \log\psi_{n}\}_{n \in \N}$ obtained from $\psi$ as in \eqref{eq: psi_n from psi} has a unique equilibrium state $\mu_\psi \in \M(f)$. Such $\mu$ is ergodic and has the following Gibbs property: there exists $C\geq 1$ such that 
\begin{equation}\label{eq: gibbs for subadditive}
C^{-1} \leq  \frac{\mu_\psi(\I)}{e^{-n\P(\Psi)}\psi(\I)} \leq C
\end{equation}
for any $n \in \N$ and $\I \in \L(n)$.
\end{prop}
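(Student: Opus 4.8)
The statement is quoted from \cite[Theorem 5.5]{feng2011equilibrium}, so the plan is to explain how the quasi-multiplicativity hypothesis drives the standard Gibbs-measure construction rather than to reproduce every estimate. First I would record the two inequalities that frame the proof: submultiplicativity of $\psi$ gives, for $\I \in \L(n)$ and $\J \in \L(m)$ with $\I\J \in \L$, the bound $\psi(\I\J) \leq \psi(\I)\psi(\J)$, so the sequence $a_n := \log \sum_{\I \in \L(n)} \psi(\I)$ is subadditive and $\P(\Psi) = \lim_n a_n/n = \inf_n a_n/n$ exists; in particular $\sum_{\I \in \L(n)}\psi(\I) \geq e^{n\P(\Psi)}$ for every $n$. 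The quasi-multiplicativity hypothesis supplies the reverse control: there are $c>0$, $k\in\N$ and connecting words $\K(\I,\J)$ of length at most $k$ with $\psi(\I\K\J) \geq c\,\psi(\I)\psi(\J)$. Concatenating blocks of a fixed large length $N$ with connectors and using primitivity of $T$ (so that admissibility can always be repaired within bounded length), one gets a super-multiplicativity up to a uniform constant: $\sum_{\I\in\L(n+m+k)}\psi(\I) \gtrsim c\left(\sum_{\I\in\L(n)}\psi(\I)\right)\left(\sum_{\J\in\L(m)}\psi(\J)\right)$, whence $\sum_{\I\in\L(n)}\psi(\I) \leq C_0 e^{n\P(\Psi)}$ for a uniform $C_0$. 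So the partition-function normalization is pinned down on both sides.

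Next I would build the candidate measure. For each $n$ define a measure $\nu_n$ on $\Sig$ supported on a transversal of the length-$n$ cylinders by $\nu_n([\I]) = \psi(\I)/\sum_{\J\in\L(n)}\psi(\J)$, and let $\mu_\psi$ be a weak-$*$ limit point of the Cesàro averages $\frac{1}{n}\sum_{j=0}^{n-1} f^j_*\nu_n$. Standard arguments show $\mu_\psi$ is $f$-invariant. To get the Gibbs bound \eqref{eq: gibbs for subadditive}, fix $\I\in\L(n)$. For the upper bound, note that for $m \gg n$, any word in $\L(m)$ containing $\I$ as a subword in a fixed position factors (up to the bounded-length ambiguity at the two ends coming from admissibility and the values of $\psi_n$ on the relevant cylinder, which is where \eqref{eq: psi for loc const}, i.e. the zero-variation property, matters) as a product controlled above by $\psi(\text{prefix})\psi(\I)\psi(\text{suffix})$; summing and using the two-sided partition-function estimate yields $\mu_\psi([\I]) \leq C e^{-n\P(\Psi)}\psi(\I)$. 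For the lower bound one uses quasi-multiplicativity in the other direction: one can always glue a prefix, $\I$, and a suffix with bounded-length connectors so that the resulting long word has $\psi$-value at least $c^2$ times $\psi(\text{prefix})\psi(\I)\psi(\text{suffix})$; summing over prefixes and suffixes of the appropriate lengths gives $\mu_\psi([\I]) \geq C^{-1} e^{-n\P(\Psi)}\psi(\I)$. Combining, $\mu_\psi$ satisfies the Gibbs property with a uniform constant $C$.

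It remains to deduce that $\mu_\psi$ is an equilibrium state, that it is unique, and that it is ergodic. Uniqueness and ergodicity I would get simultaneously from the Gibbs property by the classical argument: if $\mu, \mu'$ both satisfy \eqref{eq: gibbs for subadditive} with the same $\P(\Psi)$, then $C^{-2} \leq \mu([\I])/\mu'([\I]) \leq C^2$ uniformly over all cylinders, so $\mu$ and $\mu'$ are boundedly equivalent; a boundedly-equivalent pair of invariant measures on a mixing subshift must coincide and be ergodic (an invariant set of intermediate measure would, by the martingale/density theorem, force the Radon--Nikodym derivative to take two different values, contradicting the two-sided bound). That $\mu_\psi$ is an equilibrium state follows by computing $h_{\mu_\psi}(f) + \F(\Psi,\mu_\psi)$: the Gibbs bound gives $-\frac{1}{n}\sum_{\I}\mu_\psi([\I])\log\mu_\psi([\I]) = \P(\Psi) + \frac{1}{n}\sum_\I \mu_\psi([\I])\log\psi(\I) + O(1/n)$, and since the cylinder partitions generate, letting $n\to\infty$ and invoking the Shannon--McMillan--Breiman theorem together with the subadditive ergodic theorem (to identify $\F(\Psi,\mu_\psi) = \lim \frac1n\int\log\psi_n\,d\mu_\psi$) yields $h_{\mu_\psi}(f) + \F(\Psi,\mu_\psi) = \P(\Psi)$; the subadditive variational principle \eqref{eq: var prin} then shows no other measure can do better. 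The main obstacle is the bookkeeping in the Gibbs estimate: one must carefully absorb the bounded-length connecting words $\K$ (and the admissibility corrections forced by primitivity of $T$) into the uniform constant $C$ while keeping the normalization by $e^{-n\P(\Psi)}$ exact, and it is precisely the zero-variation hypothesis \eqref{eq: psi for loc const} that makes $\psi_n$ depend only on the length-$n$ cylinder so that $\psi(\I)$ is literally the value of $\psi_n$ there — without it one would have to carry an extra distortion term, which is exactly the subtlety addressed in the general (positive-variation) case later in the paper.
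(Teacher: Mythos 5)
Your construction is essentially the one the paper summarizes from Feng's Theorem 5.5: the normalized measures $\nu_n([\I])=\psi(\I)/\sum_{\J\in\L(n)}\psi(\J)$, a weak-$*$ limit of their Ces\`aro averages, the two-sided partition-function estimate $e^{n\P(\Psi)}\leq\sum_{\I\in\L(n)}\psi(\I)\leq C_0e^{n\P(\Psi)}$ coming from submultiplicativity and quasi-multiplicativity, and the Gibbs bounds, ergodicity, and the verification that $\mu_\psi$ is an equilibrium state all follow the same route.

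The one genuine gap is the uniqueness step. Your ``classical argument'' shows only that there is at most one $f$-invariant measure satisfying the Gibbs property \eqref{eq: gibbs for subadditive}, and the phrase ``the subadditive variational principle then shows no other measure can do better'' only says that $\mu_\psi$ attains the supremum in \eqref{eq: var prin}; neither statement rules out a second equilibrium state that fails the Gibbs property, which is exactly what the proposition asserts cannot exist. Closing this requires an additional argument showing that an \emph{arbitrary} equilibrium state $\nu$ must coincide with $\mu_\psi$. In Feng's proof this is done by combining, for every $n$, the inequality
\begin{equation*}
n\,\P(\Psi)\;\leq\;H_\nu\Bigl(\bigvee_{i=0}^{n-1}f^{-i}\mathcal{P}\Bigr)+\int\log\psi_n\,d\nu\;\leq\;\sum_{\I\in\L(n)}\nu([\I])\log\frac{\psi(\I)}{\nu([\I])}
\end{equation*}
(using the zero-variation property \eqref{eq: psi for loc const}) with Jensen's inequality and the upper bound $\sum_{\I}\psi(\I)\leq C_0e^{n\P(\Psi)}$, and then exploiting quasi-multiplicativity once more to upgrade this near-equality into Gibbs-type bounds for $\nu$ itself, forcing $\nu=\mu_\psi$. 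Without some step of this kind the uniqueness claim in the proposition remains unproved, even though everything else in your sketch is sound.
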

\begin{rem}
In Feng \cite[Theorem 5.5]{feng2011equilibrium}, this result is proved for one-sided subshifts of finite type. This generalizes easily to two-sided subshifts of finite type. We briefly summarize the proof, which is similar to Bowen's original proof. 
Define a sequence of probability measures $\nu_n$ on the $\sigma$-algebra generated by $n$-cylinders by 
$\nu_n (\I) = \psi (\I) /\sum\limits_{\J \in \L(n)} \psi(\J)$, and considers any subsequential weak-$*$ limit $\mu \in \M(f)$ of the new sequence of probability measures $\mu_n = \frac{1}{n}\sum\limits_{i=0}^{n-1} f_*^i\nu_n$.
Quasi-multiplicativity then gives the Gibbs property as well as the ergodicity on $\mu$. In fact, the sequence $\mu_n$ actually converges to $\mu$ (i.e., a subsequential limit is an actual limit), and $\mu$ is the unique equilibrium state of $\Psi$. 
The same proof readily extends to our setting of two-sided subshifts of finite type.
\end{rem}

The following remark provides a criterion to establish quasi-multiplicativity for a locally constant $\glr$-valued function.

\begin{rem}\label{rem: irreducibility}
Recall that an $\glr$-valued function $\A$ on $\Sig$ is irreducible if there does not exist a proper subspace of $\R^d$ preserved under the image of $\A$ (which is necessarily a finite set of matrices). It is well-known that irreducibility of a locally constant function implies quasi-multiplicativity. See \cite{feng2009lyapunov}.

The typicality assumption in Theorem \ref{thm: E} is related to irreducibility of locally constant cocycles because a locally constant and typical cocycle is necessarily irreducible. This follows because any $\A$-invariant subspace has to be a span of some eigendirections of $\A(p)$; if $\A$ is not irreducible, then then $\A$ would not satisfy the twisting condition (B0) and consequently fail to be typical.
\end{rem}

\subsection{Subadditive potentials with bounded distortion}
In the previous subsection, we saw that quasi-multiplicativity of $\psi\in \D$ is a sufficient condition for Bowen's theorem (Proposition \ref{prop: unique eq as gibbs}) to hold for a subadditive potential $\Psi$ with zero variation within cylinders (i.e., satisfying \eqref{eq: psi for loc const}). 

In this subsection, we show that Bowen's theorem in the subadditive setting (Proposition \ref{prop: unique eq as gibbs}) can be considered on a bigger class of subadditive potentials than those satisfying \eqref{eq: psi for loc const}. Such class consists of subadditive potentials $\Psi = \{\log\psi_n\}_{n \in \N}$ with \textit{bounded distortion}: there exists $C\geq 1$ such that for any $n \in \N$ and $\I \in \L(n)$, we have
\begin{equation}\label{eq: bdd distortion general}
C^{-1} \leq \frac{\psi_n(x)}{\psi_n(y)} \leq C
\end{equation}
for any $x,y \in [\I]$.

As noted in Remark \ref{rem: gibbs bdd distortion}, in order to generalize the Gibbs property \eqref{eq: gibbs ineq} to the general subadditive setting, one necessary condition on the subadditive potential $\Psi = \{\log \psi_n\}_{n \in \N}$ is that $\Psi$ satisfies the bounded distortion \eqref{eq: bdd distortion general}. It is clear that subadditive potentials $\Psi$ considered in the previous subsection (i.e., $\Psi$ obtained from $\psi \in \D$ by \eqref{eq: psi_n from psi}, or equivalently, $\Psi$ satisfying \eqref{eq: psi for loc const}) has the bounded distortion property with $C=1$. 

%

\begin{rem}
For a subadditive potential $\Psi = \{\log \psi_n\}_{n \in \N}$ with bounded distortion, we can restate the Gibbs property \eqref{eq: gibbs for subadditive} and quasi-multiplicativity from Definition \ref{defn: qm for subadditive} by replacing $\psi(\I)$ to $\psi_n(x)$ for \textit{any} $x\in [\I]$.   

More precisely, an $f$-invariant measure $\mu \in \M(f)$ has the Gibbs property with respect to $\Psi = \{\log \psi_n\}_{n \in \N}$ if there exists $C \geq 1$ such that for any $n \in \N$ and $\I \in \L(n)$,
$$ 
C^{-1} \leq \frac{\mu(\I)}{e^{-n\P(\Psi)}\psi_n(x)}  \leq C
$$
for any $x \in [\I]$. This formulation resembles the Gibbs property of the additive setting \eqref{eq: gibbs ineq} more closely.

Quasi-multiplicativity of such sequence $\{\psi_n\}_{n \in \N}$ (or, equivalently, of the subadditive potential $\Psi$) is equivalent to the existence of $c>0$ and $k \in \N$ such that for any words $\I,\J \in \L$, there exists $\K = \K(\I,\J) \in \L$ with $|\K| \leq k$ such that
$$\psi_{|\I\K\J|}(x) \geq c \psi_{|\I|}(y) \psi_{|\J|}(z)$$
for any $x \in [\I\K\J]$, $y \in [\I]$, and $z \in [\J]$.
\end{rem}


The following proposition states that Proposition \ref{prop: unique eq as gibbs} remains valid for subadditive potentials with bounded distortion.

\begin{prop}
Let $\Psi = \{\log\psi_n\}_{n \in \N}$ be a subadditive potential with bounded distortion \eqref{eq: bdd distortion general}. If $\{\psi_n\}_{n \in \N}$ is quasi-multiplicative, then $\Psi$ has a unique equilibrium state. Such equilibrium state is ergodic and has the Gibbs property with respect to $\Psi$.
\end{prop}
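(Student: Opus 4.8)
The plan is to reduce this proposition to Proposition \ref{prop: unique eq as gibbs}, which already handles subadditive potentials of the form \eqref{eq: psi for loc const}, by replacing $\{\psi_n\}_{n\in\N}$ with the discretized sequence $\{\psi_n'\}_{n\in\N}$ defined from $\psi'\in\D$ via $\psi'(\I):=\max_{x\in[\I]}\psi_n(x)$ (as in \eqref{eq: simplification 2}) and then $\psi'_n(x):=\psi'([x]_n^w)$ (as in \eqref{eq: psi_n from psi}). First I would check that $\psi'\in\D$, i.e. that $\psi'$ is submultiplicative: this is immediate from the submultiplicativity of $\{\psi_n\}$ together with the fact that $[\I\J]\subset[\I]$ and $f^{|\I|}[\I\J]\subset[\J]$. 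Next, bounded distortion \eqref{eq: bdd distortion general} says precisely that $C^{-1}\psi'_n(x)\le\psi_n(x)\le\psi'_n(x)$ for all $x$ and all $n$ (since $\psi'_n(x)=\max_{y\in[x]_n}\psi_n(y)$), so $\{\psi_n\}$ and $\{\psi'_n\}$ are uniformly comparable sequences of functions on $\Sig$.

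The two consequences I need from this comparison are: (i) quasi-multiplicativity of $\{\psi_n\}$ transfers to $\psi'$, and (ii) the comparison of pressures and of equilibrium states. For (i): given $\I,\J\in\L$, the quasi-multiplicativity hypothesis on $\{\psi_n\}$ (in the bounded-distortion reformulation recorded in the preceding remark) gives $\K$ with $|\K|\le k$ and $\psi_{|\I\K\J|}(x)\ge c\,\psi_{|\I|}(y)\,\psi_{|\J|}(z)$ for all $x\in[\I\K\J]$, $y\in[\I]$, $z\in[\J]$; taking suprema and using $\psi_n\le\psi'_n\le C\psi_n$ yields $\psi'(\I\K\J)\ge cC^{-2}\psi'(\I)\psi'(\J)$, so $\psi'$ is quasi-multiplicative in the sense of Definition \ref{defn: qm}. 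For (ii): since $\psi_n$ and $\psi'_n$ differ by a bounded multiplicative factor uniformly in $n$, the defining expression \eqref{eq: pressure} gives $\P(\Psi)=\P(\Psi')$ where $\Psi'=\{\log\psi'_n\}$; moreover $\F(\Psi,\mu)=\F(\Psi',\mu)$ for every $\mu\in\M(f)$ because $\frac1n\int\log(\psi'_n/\psi_n)\,d\mu\to0$. Hence the subadditive variational principle \eqref{eq: var prin} shows $\mu$ is an equilibrium state of $\Psi$ if and only if it is one of $\Psi'$.

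Now apply Proposition \ref{prop: unique eq as gibbs} to $\psi'\in\D$: the associated potential $\Psi'$ has a unique equilibrium state $\mu_{\psi'}$, which is ergodic and satisfies the Gibbs property $C_0^{-1}\le\mu_{\psi'}(\I)/(e^{-n\P(\Psi')}\psi'(\I))\le C_0$. By (ii), $\mu_{\psi'}$ is also the unique equilibrium state of $\Psi$, and it is ergodic. Finally, combining the Gibbs estimate for $\Psi'$ with $\psi_n(x)\le\psi'(\I)\le C\psi_n(x)$ for $x\in[\I]$ gives the Gibbs property of $\mu_{\psi'}$ with respect to $\Psi$ in the form displayed in the remark preceding the proposition, with Gibbs constant $CC_0$. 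I do not expect any serious obstacle here; the only point requiring a little care is checking that the constant in bounded distortion is independent of $n$ so that all the multiplicative comparisons above are genuinely uniform, which is exactly what \eqref{eq: bdd distortion general} asserts, and that the passage from \eqref{eq: pressure} through the comparison of $(n,\ep)$-separated sums is clean — both are routine.
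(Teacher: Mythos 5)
Your proposal is correct and follows essentially the same route as the paper: pass to the discretized potential $\widetilde\Psi$ built from $\psi(\I)=\max_{x\in[\I]}\psi_n(x)$, use bounded distortion to identify pressures, $\F$-values, equilibrium states, and Gibbs properties of $\Psi$ and $\widetilde\Psi$, and then invoke Proposition \ref{prop: unique eq as gibbs}. The only (harmless) redundancy is your step (i): by Definition \ref{defn: qm for subadditive}, quasi-multiplicativity of $\{\psi_n\}$ already means quasi-multiplicativity of the associated $\psi\in\D$, so no constant-chasing through the bounded-distortion reformulation is needed there.
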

\begin{proof}
Let $\psi \in \D$ be the submultiplicative function on $\L$ obtained from $\Psi$ as in \eqref{eq: simplification 2}:
$$\psi(\I) := \max\limits_{x\in [\I]}\psi_n(x)$$ 
Then, $\psi$ is quasi-multiplicative. Let $\widetilde{\Psi} = \{\log\widetilde{\psi}_n\}_{n \in \N}$ be the subadditive potential obtained from $\psi$ by \eqref{eq: psi_n from psi}. Note that $\widetilde{\Psi}$ satisfies \eqref{eq: psi for loc const}, and $\widetilde{\psi}_n$ and $\psi_n$ are related by the identity
$$\widetilde{\psi}_n(x) = \max\limits_{y \in [x]_n}\psi_n(y).$$
The proposition will follow from the following claim relating the thermodynamical objects of $\Psi$ and $\widetilde{\Psi}$.\\
\noindent\textbf{Claim}: $\P(\Psi) = \P(\widetilde{\Psi})$. Moreover, the set of equilibrium states of $\Psi$ is equal to the set of equilibrium states of $\widetilde{\Psi}$.
\begin{proof}[Proof of the claim]
Both statements made in the claim easily follow from the bounded distortion property on $\Psi$. 

For any $(n,\ep)$-separated set $E$, we have from the bounded distortion and the definition of $\widetilde{\psi}_n$ that
$$1 \leq \frac{\sum\limits_{x \in E}{\widetilde{\psi}_n}(x)}{\sum\limits_{x \in E}{\psi_n}(x)} \leq C.$$
Then, it follows from the definition of the subadditive pressure \eqref{eq: pressure} that $\P(\Psi) = \P(\widetilde{\Psi})$.

For the second statement in the claim, again from the bounded distortion, we have $\F(\Psi,\mu) = \F(\widetilde{\Psi},\mu)$ for any $f$-invariant measure $\mu$. Since the measure-theoretic entropy $h_{\mu}(f)$ does not depend on the potential, the second claim follows from the subadditive variational principle \eqref{eq: var prin}.
\end{proof}\noindent

Since $\widetilde{\Psi}$ satisfies \eqref{eq: psi for loc const}, we obtain the unique equilibrium state $\mu$ of $\widetilde{\Psi}$ from Proposition \ref{prop: unique eq as gibbs}. From the claim, we conclude that $\mu$ is the unique equilibrium state of $\Psi$. To conclude the proof, we note from the bounded distortion property that the Gibbs property of $\mu$ with respect to $\widetilde{\Psi}$ is equivalent to the Gibbs property of $\mu$ with respect to $\Psi$.
\end{proof}
 
Recalling that the singular value potential of a $\glr$-valued function $\A$ is defined by 
$$\Phi_\A^s:=\{\log\vps(\A^n(\cdot))\}_{n \in \N},$$ 
the following lemma shows that the singular value potentials $\Phi_\A^s$, $s \in [0,\infty)$ of a fiber-bunched $\A \in \C^\alpha_b(\Sig,\glr)$ have bounded distortion. 

\begin{lem}[bounded distortion]\label{lem: bdd distortion} Let $\A$ be a \hol and fiber-bunched $\glr$-valued function on $\Sig$. Then $\Phi_\A^s$ has bounded distortion for any $s \in [0,\infty)$.  
\end{lem}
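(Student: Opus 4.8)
The plan is to exploit the two structural inputs we have about fiber-bunched cocycles: first, that the operator norm has bounded variation within cylinders because the canonical holonomies are Hölder (the estimate \eqref{eq: hol holder}), and second, that the singular value function $\vphi^s(A)$ for $s \in [0,d]$ can be controlled by submultiplicativity and its behavior under multiplication by matrices close to the identity. So the first step is to reduce the claim to a statement about $\|\A^n(x)\|$ and $\|\A^n(x)^{-1}\|$, or more precisely about $\vphi^s(\A^n(x))$ versus $\vphi^s(\A^n(y))$ for $x,y$ in a common cylinder $[\I]$ with $|\I| = n$.

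The key observation is that if $x,y \in [\I]$, then $x$ and $y$ agree in the first $n$ coordinates, so one can move between the orbits $\{x, fx, \ldots, f^{n-1}x\}$ and $\{y, fy, \ldots, f^{n-1}y\}$ using stable holonomies along the way, or more directly: set $w = [y,x] \in \Wloc^s(y) \cap \Wloc^u(x)$ (using the bracket \eqref{eq: bracket}, which is defined since $x_0 = y_0$). In fact the cleanest route is to compare $\A^n(x)$ with $\A^n(y)$ directly via holonomies: since $y \in \Wloc^u$ of some point and $x$ is close, one writes $\A^n(y) = H^s_{f^n z, f^n y} \, \A^n(z)\, H^s_{y,z}$-type relations. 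Concretely, for $x,y \in [\I]$ one has $f^n x$ and $f^n y$ still in the same cylinder $[x_n \ldots]$, but more usefully $x \in \Wloc^s$ of a point whose first $n$ symbols match, so using property (2) of Definition \ref{defn: hol rel} iteratively gives $\A^n(x) = H^s_{f^n y, f^n x} \circ \A^n(y) \circ H^s_{y,x}$ when $y \in \Wloc^s(x)$; and a general pair in $[\I]$ is bridged by passing through $[y,x]$, whose distance to both $x$ and $y$ is at most $\theta^n$. The Hölder bound \eqref{eq: hol holder} then gives $\|H^s_{\cdot,\cdot} - I\| \le C\theta^{n\alpha}$, so these holonomy factors are uniformly bounded in norm (by some $C_0$ independent of $n$), and likewise their inverses.

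From there the singular value estimate is routine: if $A = U B V$ with $\|U^{\pm 1}\|, \|V^{\pm 1}\| \le C_0$, then $\vphi^s(A) \le \vphi^s(U)\vphi^s(B)\vphi^s(V)$ by submultiplicativity \eqref{eq: submult}, and $\vphi^s(U) = \alpha_1(U)\cdots \le \|U\|^s \le C_0^s \le C_0^d$ for $s \in [0,d]$ (and similarly for $V$), giving $\vphi^s(A) \le C_0^{2d}\, \vphi^s(B)$; the reverse inequality follows by symmetry since $B = U^{-1} A V^{-1}$. Taking $B = \A^n(y)$, $A = \A^n(x)$ yields $C^{-1} \le \vphi^s(\A^n(x))/\vphi^s(\A^n(y)) \le C$ with $C = C_0^{2d}$ uniform in $n$, $s$, and $\I$, which is exactly \eqref{eq: bdd distortion general} for $\Psi = \Phi_\A^s$. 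For $s > d$ the function is $|\det|^{s/d}$, and $|\det \A^n(x)| = |\det \A^n(y)|$ up to the determinants of the holonomy factors, which is again a bounded distortion; so the range $[0,\infty)$ is covered.

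The main obstacle is the bookkeeping in the first reduction step: one must be careful that the holonomies used to pass from $\A^n(x)$ to $\A^n(y)$ are evaluated at pairs of points whose distance is governed by $\theta^n$ (not just $\theta$), which is what makes the norm bound $C_0$ uniform in $n$ rather than growing. This requires using the cocycle-invariance relation (2) of Definition \ref{defn: hol rel} $n$ times and checking that the "error" holonomies $H^s_{f^j y, f^j x}$ telescope correctly, or equivalently invoking the global-holonomy formula $H^s_{x,y} = \A^n(y)^{-1} H^s_{f^n x, f^n y}\A^n(x)$ and rearranging — one gets $\A^n(x) = H^s_{f^n x, f^n y}{}^{-1}$-free expression. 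Handling the generic pair in $[\I]$ (not on a single local stable set) by factoring through $[y,x]$ and combining a stable and an unstable holonomy, both at scale $\theta^n$, is the one place where a small argument is genuinely needed; everything after that is the elementary singular-value inequality above.
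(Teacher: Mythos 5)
Your proposal is correct and takes essentially the same route as the paper's proof: factor through the bracket point $z=[x,y]$, iterate the holonomy intertwining relation (2) of Definition \ref{defn: hol rel} to get $\A^n(z)=H^s_{f^nx,f^nz}\circ\A^n(x)\circ H^s_{z,x}$ (and the unstable analogue for $y$), and then bound $\vphi^s$ of the holonomy factors via submultiplicativity and the uniform H\"older bound \eqref{eq: hol holder}. The only cosmetic inaccuracies are the claim that $[y,x]$ is within $\theta^n$ of both $x$ and $y$ (at time $0$ the distance is only $\le\theta$ on one side, which is all that is needed for the uniform norm bound) and a transposed holonomy subscript; neither affects the argument, and the paper's constant $c^{4s}$ is simply allowed to depend on $s$, so your separate treatment of $s>d$ is an acceptable but unnecessary refinement.
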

\begin{proof}
From \hol continuity of the canonical holonomies \eqref{eq: hol holder}, we can fix $c>1$ such that $\|H_{x,y}^{s/u}\|$ is bounded above by $c$ whenever $d(x,y) \leq \theta$. Hence, for any $x,y \in \Sig$ with $d(x,y) \leq \theta$, we have that $\vps(H^{s/u}_{x,y})$ is uniformly bounded above by $c^s$. 

Consider any $n \in \N$, $\I \in \L(n)$, and $x,y\in [\I]$.
Then, setting $z:=[x,y]$ and using (2) of Definition \ref{defn: hol rel} as well as the submultiplicativity \eqref{eq: submult} of $\vps$, we have 
$$c^{-2s}\vps(\A^n(x)) \leq  \vps(\A^n(z)) =\vps( H^s_{f^nx,f^nz} \circ \A^n(x) \circ H^s_{z,x}) \leq c^{2s} \vps(\A^n(x)).$$
Using the canonical unstable holonomy instead, we have $c^{-2s} \leq \vps(\A^n(y))/\vps(A^n(z))\leq c^{2s}$.
Then, the statement follows by setting the constant $C$ equal to $c^{4s}$.
\end{proof}

\begin{rem}\label{rem: unif cts hol implies bdd dist}
Note that Lemma \ref{lem: bdd distortion} also holds for any $\A \colon \Sig \to \glr$ admitting uniformly continuous holonomies $H^{s/u}$. 

Moreover, the canonical holonomies $H^{s/u}$ vary continuously in $\A$. Hence, for a fixed $s$, by increasing $C$ from Lemma \ref{lem: bdd distortion} if necessary, the bounded distortion holds on $\Phi_\B^s$ for all $\B \in C_b^\alpha(\Sig,\glr)$ sufficiently close to $\A$ with the uniform constant $C$. 
\end{rem}

Recall that the subset $\U$ of $C^\alpha_b(\Sig,\glr)$ consists of typical $\glr$-valued functions.  
Using the uniform constant $c$ from Theorem \ref{thm: E}, we show that the subadditive pressure $\P(\Phi_\A^s)$ is continuous on $\U \times [0,\infty)$ by adapting the proof of Fekete's lemma. 
Since the equilibrium state of $\Phi^s_\A$ for a typical $\A \in \U$ is unique from quasi-multiplicativity, it follows that the unique equilibrium state also varies continuously on $\U \times [0,\infty)$.
\begin{thm*}[Theorem B]
~
\begin{enumerate}
\item The map $(\A,s) \mapsto \P(\Phi_\A^s)$ is continuous on $\U \times [0,\infty)$.
\item For each $\A \in \U$ and $s \in [0,\infty)$, the singular value potential $\Phi_\A^s$ has a unique equilibrium state $\mu_{\A,s}$, which also varies continuously on $\U \times [0,\infty)$.
\end{enumerate}
\end{thm*}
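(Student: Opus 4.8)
The plan is to deduce Theorem B from Theorem \ref{thm: E}, Lemma \ref{lem: bdd distortion}, Proposition \ref{prop: unique eq as gibbs} (in the bounded-distortion version), and a Fekete-type argument for the pressure. First I would fix $\A \in \U$ and observe that, by Theorem \ref{thm: E}, the singular value functions $\widetilde{\vphi}^s_\A$ are simultaneously quasi-multiplicative with constants $c,k$ that are uniform on a neighborhood $\NN$ of $\A$ in $\U$; by Lemma \ref{lem: bdd distortion} and Remark \ref{rem: unif cts hol implies bdd dist}, $\Phi_\B^s$ has bounded distortion with a constant uniform for $\B \in \NN$ and $s$ in a bounded range. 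Combining quasi-multiplicativity with the submultiplicativity of $\widetilde{\vphi}^s_\A$, one gets two-sided control: for all $\I,\J \in \L$,
\begin{equation*}
c\,\widetilde{\vphi}^s_\A(\I)\,\widetilde{\vphi}^s_\A(\J) \leq \widetilde{\vphi}^s_\A(\I\K\J) \leq \widetilde{\vphi}^s_\A(\I)\,\widetilde{\vphi}^s_\A(\J),
\end{equation*}
where the right inequality also uses that $\K$ can be absorbed at a bounded multiplicative cost (here one uses that $\vps$ of the bounded matrices $\A^{|\K|}(\cdot)$ over words of length $\le k$ is bounded above and below uniformly). This is the standard ``almost supermultiplicative plus submultiplicative'' setup.

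Next I would prove part (1). Set $a_n = a_n(\B,s) := \log \sum_{\I \in \L(n)} \widetilde{\vphi}^s_\B(\I)$. Submultiplicativity of $\widetilde{\vphi}^s_\B$ together with primitivity of $T$ gives $a_{n+m} \le a_n + a_m + O(1)$, while the quasi-multiplicativity inequality above, summed over pairs $(\I,\J)$ and grouping by the connecting word $\K$, gives $a_{n+m+|\K|} \ge a_n + a_m + \log c$, i.e. $a_{n} + a_m \le a_{n+m+j} + O(1)$ for some $0 \le j \le k$. A quantitative Fekete lemma then shows $\P(\Phi_\B^s) = \lim \frac{1}{n} a_n = \inf_n \frac{1}{n}(a_n + O(1))$ up to a controlled error, and crucially the error and the convergence rate are governed only by $c$, $k$, and the bounded-distortion constant, hence uniform over $\NN$ and over $s$ in a bounded interval. (One must also check that $\frac1n a_n$ with the separated-set definition of $\P(\Phi^s_\B)$ in \eqref{eq: pressure} coincides with the cylinder-sum definition, which follows from bounded distortion: an $(n,\ep)$-separated set with $\ep$ small meets each $n$-cylinder in a bounded number of points.) Since for fixed $n$ the finite quantity $\frac1n a_n(\B,s)$ is continuous in $(\B,s)$ — because $\vps(\B^n(x))$ is continuous in $(\B,s,x)$ for $\glr$-valued matrices — a uniform limit of continuous functions is continuous, so $(\B,s) \mapsto \P(\Phi_\B^s)$ is continuous on $\NN \times [0,\infty)$; as $\A \in \U$ was arbitrary this gives continuity on all of $\U \times [0,\infty)$.

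For part (2), uniqueness of the equilibrium state $\mu_{\B,s}$ for each $\B \in \U$, $s \in [0,\infty)$ is immediate from Proposition \ref{prop: unique eq as gibbs} in the bounded-distortion form proved in the excerpt: $\widetilde{\vphi}^s_\B$ is quasi-multiplicative (Theorem \ref{thm: E}; for $s>d$ one uses that $\vps = |\det|^{s/d}$ is genuinely multiplicative, hence quasi-multiplicative, so there is no restriction to $[0,d]$ here) and $\Phi_\B^s$ has bounded distortion (Lemma \ref{lem: bdd distortion}), so the proposition applies. Moreover $\mu_{\B,s}$ satisfies the Gibbs bound
\begin{equation*}
C^{-1} \le \frac{\mu_{\B,s}(\I)}{e^{-n\P(\Phi_\B^s)}\,\widetilde{\vphi}^s_\B(\I)} \le C
\end{equation*}
for all $\I \in \L(n)$, with $C$ depending only on the uniform quasi-multiplicativity and bounded-distortion constants, hence uniform for $(\B,s)$ in $\NN \times [0,s_0]$. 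For continuity of $(\B,s) \mapsto \mu_{\B,s}$ in the weak-$*$ topology, suppose $(\B_i,s_i) \to (\B,s)$ and let $\mu$ be any weak-$*$ subsequential limit of $\mu_{\B_i,s_i}$. The uniform Gibbs bound passes to the limit: since $\widetilde{\vphi}^{s_i}_{\B_i}(\I) \to \widetilde{\vphi}^s_\B(\I)$ (continuity of the finite max of continuous functions) and $\P(\Phi_{\B_i}^{s_i}) \to \P(\Phi_\B^s)$ by part (1), and cylinders are clopen so $\mu_{\B_i,s_i}(\I) \to \mu(\I)$, the limit $\mu$ satisfies the Gibbs property with respect to $\Phi_\B^s$; an invariant measure with the Gibbs property is an equilibrium state (standard, via the variational principle and the bounded-distortion estimates, as in Bowen's argument), so $\mu = \mu_{\B,s}$ by uniqueness. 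Since every subsequential limit equals $\mu_{\B,s}$, the whole sequence converges, giving continuity. The main obstacle is making the Fekete argument genuinely \emph{uniform} — i.e. isolating that both the subadditive and the almost-superadditive defects are bounded by constants depending only on $c$, $k$ and the bounded-distortion constant (and, for $s$ in a bounded range, a uniform bound on $\|\B^{\pm 1}\|$), so that $\frac1n a_n$ converges to $\P$ uniformly on $\NN \times [0,s_0]$; once that is in place, everything else is a routine compactness-and-Gibbs argument.
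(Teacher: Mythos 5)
Your part (1) is essentially the paper's own argument: the paper also works with $\alpha_n^s(\B)=\sum_{\I\in\L(n)}\widetilde{\vphi}^s_\B(\I)$, uses Theorem \ref{thm: E} (together with Remark \ref{rem: s_0 for all R+}, which is the paper's version of your ``$s>d$ is harmless'' remark) to get almost superadditivity of $\log\alpha_n^s(\B)$ with constants uniform near $\A$ (Lemma \ref{lem: superadditive}), and then runs a quantitative Fekete argument so that $\frac1n\log\alpha_n^s$ approximates $\P(\Phi_\B^s)$ uniformly, finishing by continuity of the finite-$n$ quantity; the only organizational difference is that the paper splits the $s$-continuity (via the $\Upsilon^{n\delta}$ ratio estimate and bounded distortion) from the $\B$-continuity, while you treat $(\B,s)$ jointly. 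For part (2), uniqueness is the same (Proposition \ref{prop: unique eq as gibbs}), but your continuity argument takes a genuinely different route: you pass a \emph{uniform} Gibbs bound to the weak-$*$ limit and then invoke ``invariant $+$ Gibbs $\Rightarrow$ equilibrium state,'' whereas the paper avoids Gibbs bounds entirely and argues softly that any subsequential limit $\nu$ of $\mu_{\A_n,s_n}$ satisfies $\P(\Phi_\A^s)=\lim\P(\Phi_{\A_n}^{s_n})=\lim\big(h_{\mu_{\A_n,s_n}}(f)+\F(\Phi_{\A_n}^{s_n},\mu_{\A_n,s_n})\big)\le h_\nu(f)+\F(\Phi_\A^s,\nu)$, using upper semi-continuity of the entropy map (expansiveness) and of $\F$, so $\nu=\mu_{\A,s}$ by uniqueness. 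Your route is workable but carries two extra obligations that you assert rather than prove: (i) the Gibbs constant in Proposition \ref{prop: unique eq as gibbs} must be shown to be \emph{locally uniform} in $(\B,s)$ --- the cited proposition gives a constant for each fixed potential, so you would have to trace Feng's construction and check that the constant depends only on $c$, $k$, the bounded-distortion constant and uniform norm bounds (plausible, given Theorem \ref{thm: E} and Lemma \ref{lem: superadditive}, but not automatic); and (ii) the implication ``invariant measure with the Gibbs property is an equilibrium state'' in the subadditive setting needs the short entropy computation $h_\mu(f)+\F(\Phi_\B^s,\mu)\ge \P(\Phi_\B^s)$ using bounded distortion, which should be spelled out. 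The paper's semicontinuity argument buys you continuity of $\mu_{\A,s}$ with none of these uniformity issues, at the cost of using part (1) and standard upper semi-continuity facts; your argument, once (i) is verified, yields the stronger byproduct of a locally uniform Gibbs estimate for the equilibrium states.
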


The proof of Theorem \ref{thm: B} appears in Section 5.
From the definition and the subadditivity, the map $(\A,s) \mapsto \P(\Phi_\A^s)$ is upper semi-continuous,
 and hence is generically continuous on its domain $C^\alpha(\Sig,\glr)$. Theorem \ref{thm: B} establishes that, when restricted to $C^\alpha_b(\Sig,\glr)$, the subadditive pressure varies continuously on an open and dense subset $\U$. 
 
Cao, Pesin, and Zhao \cite{cao2018dimension} recently showed that the map $(\A,s)\mapsto \P(\Phi_\A^s)$ is jointly continuous on $C^\alpha(\Sig,\glr) \times [0,\infty)$, and Theorem \ref{thm: B} (1) is implied by their result. However, the methods of proof are different. Cao, Pesin, and Zhao construct a horseshoe with dominated splitting which carries most of the pressure. Using the structural stability of horseshoes, they establish the lower semi-continuity of the pressure. See \cite{cao2018dimension} for details.
On the other hand, we compare $\P(\Phi_\A^s)$ to $\P(\Phi_\B^s)$ for $\B$ sufficiently close to $\A$ using uniform constants from simultaneous quasi-multiplicativity of Theorem \ref{thm: E}.

For similar results in this direction, Feng and Shmerkin \cite{feng2014non} showed that locally constant functions are continuity points of $\P(\Phi_\A^s)$ in $L^\infty(\Sig,M_{d \times d}(\R))$.

\subsection{Exterior algebra}\label{subsection: exterior algebra}
We will make use of the exterior algebra in studying the singular value potential $\Phi_\A^s$. For $1 \leq k \leq d$, we denote the $k$-th exterior power of $\R^d$ by $(\R^d)^{\wedge k}$. It is a ${d\choose k}$-dimensional $\R$-vector space spanned by decomposable vectors $v_1 \wedge \ldots \wedge v_k$ with the usual identifications.

Any linear transformation $A$ and the standard inner product $\langle \cdot , \cdot \rangle$ on $\R^d$ naturally extend to $(\R^d)^{\wedge k}$: for any two decomposable vectors $v_1 \wedge \ldots \wedge v_k, u_1 \wedge \ldots \wedge u_k \in (\R^d)^{\wedge k}$, we have
\begin{align*}
A^{\wedge k}(v_1 \wedge \ldots \wedge v_k) &:= Av_1 \wedge \ldots \wedge Av_k,\\
\langle v_1 \wedge \ldots \wedge v_k, u_1 \wedge \ldots \wedge u_k \rangle &:= \det (\langle v_i,u_j\rangle)_{1 \leq i,j \leq k},
\end{align*}
and we extend it to the entire $(\R^d)^{\wedge k}$ by linearity. The exterior algebra satisfies the following properties: for any linear transformations $A,B$ of $\R^d$, 
\begin{align*}
(AB)^{\wedge k} = A^{\wedge k }B^{\wedge^k}~,~  (A^{\wedge k})^\intercal = (A^\intercal)^{\wedge k},\\
\|A^{\wedge k }\| = \alpha_1(A)\ldots \alpha_k(A) = \vphi^k(A).
\end{align*}
Under the induced inner product on $(\R^d)^{\wedge k}$, it follows that $\Phi_\A^k = \Phi^1_{\A^{\wedge k}}$. 


\section{Quasi-multiplicativity}
In this section, we prove Theorem \ref{thm: E}. We will first illustrate the ideas by proving a simpler result, Theorem \ref{thm: A}. Building on the proof Theorem \ref{thm: A} and using an inductive argument, we will prove a more general result which we describe now.

In what follows, we let $\V_t, ~t=1,2,\ldots,\kappa $ be normed $\R$-vector spaces of dimension $d_t$. 
For any $\A_t\colon \Sig \to \text{GL}(\V_t)$, we define $\widetilde{\vphi}_{\A_t}^1 \colon \L \to \R^+_0$ analogously to \eqref{eq: A(I)} and \eqref{eq: simplification 1}: $$\widetilde{\vphi}_{\A_t}^1(\I) = \|\A_t(\I)\| := \max\limits_{x \in [\I]}\|\A_t^{|\I|}(x)\|.$$

\begin{thm}\label{thm: qm general} 
Let $\A_t \colon \Sig \to \text{GL}(\V_t),~ t =1,2,\ldots,\kappa$ be \hol functions admitting uniformly continuous stable and unstable holonomies. Suppose there exist a fixed point $p \in \Sig$ and a homoclinic point $z \in \W^s(p) \cap \W^u(p) \setminus \{p\}$ such that each $\A_t$ satisfies the pinching (A0) and the twisting (B0) conditions of Definition \ref{defn: 1-typical} at $p$ and $z$. Then the singular value functions $\widetilde{\vphi}^1_{\A_t},~t= 1,2,\ldots, \kappa$ are simultaneously quasi-multiplicative: there exist $c>0$, $k \in \N$ such that for any words $\I,\J \in \L$, there exists $\K = \K(\I,\J) \in \L$ with $|\K| \leq k$ such that $\I\K\J \in \L$ and that for each $1 \leq t\leq \kappa$, we have 
$$\|\A_t(\I\K\J)\| \geq c\|\A_t(\I)\|\cdot \|\A_t(\J)\|.$$
Moreover, the constants $c,k$ can be chosen uniformly in a small neighborhood of each $\A_t$.
\end{thm}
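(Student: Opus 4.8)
The strategy is to reduce, for each word $\I \in \L$, the quantity $\|\A_t(\I)\|$ to a more tractable geometric object: a pair of one-dimensional subspaces of $\V_t$ (the "most expanded" input and output directions) such that $\|\A_t^{|\I|}(x_\I)\|$ is comparable, up to a bounded factor, to the norm of $\A_t$ applied to a unit vector in the input direction, measured along the output direction. Concretely, for $x \in [\I]$ realizing the max in $\|\A_t(\I)\|$, let $u_t(\I), w_t(\I) \in \mathbb{P}(\V_t)$ be the directions of the top right and left singular vectors of $\A_t^{|\I|}(x)$. Using the uniformly continuous holonomies $H^{s/u}$ for $\A_t$, one transports these directions to the holonomy-invariant fibers: replace $\A_t^{|\I|}(x)$ by $H^s \circ \A_t^{|\I|}(x) \circ H^u$ anchored at the periodic point $p$ (more precisely, push $x$ into $\Wloc^u(p)$ and $f^{|\I|}x$ toward $\Wloc^s(p)$ using $H^u, H^s$), which changes the norm only by a bounded multiplicative constant because the holonomies are uniformly bounded. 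The upshot: it suffices to find a connecting word $\K$ of bounded length so that for every $t$ the composed linear map $\A_t(\I) \to \A_t(\K) \to \A_t(\J)$ does not collapse the output direction of $\A_t(\I)$ against the kernel-like (most contracted) hyperplane of $\A_t(\J)$; equivalently, that the image $w_t(\I)$ lands in "general position" relative to the input direction $u_t(\J)$.

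The heart of the argument is a single word $\K_0$ that works \emph{simultaneously} for all $t$ and all possible boundary directions, chosen once and for all from the pinching/twisting data. Since $P_t := \A_t^{\mathrm{per}(p)}(p)$ has simple real eigenvalues of distinct norms with eigenvectors $v_1^{(t)}, \dots, v_{d_t}^{(t)}$, iterating $P_t$ forward contracts every direction outside the bottom hyperplane toward $[v_1^{(t)}]$ (the top eigendirection), and iterating backward pushes every direction outside the top hyperplane toward the bottom eigendirection. So, given arbitrary $\I, \J$, I would form $\K = 0^{N} \, \K(\I,\J)_{\mathrm{loop}} \, 0^{N}$ where $0^N$ denotes $N$ repetitions of the periodic block of $p$ and $\K_{\mathrm{loop}}$ is the fixed admissible block realizing the holonomy loop $\psi_p^z = H^s_{z,p} \circ H^u_{p,z}$ (via \eqref{eq: psi_p^z}, this loop is realized by $\A_t^\ell(z)$ up to the bounded holonomies). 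For $N$ large: the leading $0^N$ sends $w_t(\I)$ — which, after the holonomy reduction, is some direction near the fiber over $p$ but \emph{a priori} could be almost anywhere — close to $[v_1^{(t)}]$; provided $w_t(\I)$ avoids the bottom hyperplane $\WW^{(t)}_{\mathrm{bad}}$, which we arrange by first applying a bounded number of $0$'s interleaved with the loop if necessary. Then $\psi_p^z$ twists $[v_1^{(t)}]$ into general position (condition (B0): the coefficient $c^{(t)}_{1,j} \neq 0$ for all $j$, so the twisted vector has nonzero component along every $v_j^{(t)}$, in particular along $v_{d_t}^{(t)}$). Then the trailing $0^N$, now acting by $P_t^{-N}$-like backward dynamics is the wrong sign — instead I take the trailing block to be $0^N$ acting \emph{forward} in the product $\A_t^{|\J|}(\cdot)$-reading, so what matters is that the twisted direction, when fed into $\A_t(\J)$, is not in the most-contracted hyperplane of $\A_t(\J)$; since after the long $0^N$ the direction is $\approx [v_1^{(t)}]$ again — no: the correct bookkeeping is that after $\psi_p^z$ and a further $0^N$ the direction converges to $[v_1^{(t)}]$, which is a \emph{fixed} direction depending only on $p$, and then one must ensure $[v_1^{(t)}]$ is not contained in the bottom singular hyperplane of $\A_t(\J)$; this is precisely where the \emph{input} direction $u_t(\J)$ enters, and one needs $[v_1^{(t)}]$ to avoid a hyperplane. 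Here is the fix: choose $N$ \emph{depending on nothing} but large, and absorb the dependence on $\J$ into the factor $c$; since $u_t(\J)$ varies over a compact set $\mathbb{P}(\V_t)$, and the finitely many relevant hyperplane conditions are open and dense, one shows that for suitable $N$ a uniform lower bound holds — this uniformity is exactly the content of the compactness-plus-openness argument and is the \textbf{main obstacle}.

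To make the last point precise and robust I would argue by contradiction and compactness: suppose no uniform $c, k$ exist. Then there are sequences $\I_n, \J_n$ and directions $u_t(\I_n), w_t(\I_n), u_t(\J_n), w_t(\J_n)$ such that every bounded-length connecting word fails; pass to subsequences so all these directions converge in the (compact) projective spaces, and so that the relevant finite-dimensional reduced linear maps (there are only boundedly many, parametrized by the limiting directions and the fixed loop data) converge. In the limit one obtains boundary directions $u_t^\infty, w_t^\infty$ for which the fixed word $\K = 0^N \K_{\mathrm{loop}} 0^N$ \emph{must} succeed by the pinching/twisting conditions — because $0^N$ drives $w_t^\infty$ into a neighborhood of $[v_1^{(t)}]$ as soon as $w_t^\infty \notin \WW^{(t)}_{\mathrm{bad}}$, and if $w_t^\infty \in \WW^{(t)}_{\mathrm{bad}}$ one first inserts a bounded number of loop-and-$0$ blocks to escape that hyperplane (this escape is possible by twisting, since $\psi_p^z$ maps no eigendirection into any coordinate hyperplane), and symmetrically on the $\J$ side — contradicting the failure. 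Finiteness of $\kappa$ lets us take $k = \kappa \cdot (\text{bounded length})$ and $c$ the product of the $\kappa$ individual constants. The uniformity in a neighborhood of each $\A_t$ is automatic: all objects used — the periodic and homoclinic points, $P_t$, its eigendata, the holonomies, the loop $\psi_p^z$, and the pinching/twisting inequalities (being open conditions) — vary continuously with $\A_t$, so the same $\K$-construction with the same (slightly enlarged) $N$ and (slightly shrunk) $c$ works for all nearby cocycles; the reduction step inherits its bounds from the uniform continuity of holonomies, which is stable under perturbation as noted in Remark \ref{rem: unif cts hol implies bdd dist}.
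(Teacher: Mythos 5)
Your overall architecture (reduce $\|\A_t(\I)\|$ to top singular directions transported by the uniformly continuous holonomies to the fiber over $p$, then build a bounded connecting word out of powers of $P$ at $p$ and the homoclinic loop $\psi_p^z$, and finish with an angle-gap estimate of the type ``norm of a product is at least the product of norms times a transversality factor'') is indeed the paper's strategy. But there is a genuine gap at the step you yourself flag as the main obstacle, and your proposed fixes do not close it. The claim that ``the leading $0^N$ sends $w_t(\I)$ close to $[v_1^{(t)}]$'' is false with a uniform $N$: the number of iterates of $P_t$ needed depends on the angle between $w_t(\I)$ and the invariant hyperplane $\WW_1^{(t)}$ spanned by the non-top eigenvectors, and this angle is not uniformly bounded below over all words $\I$. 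Your escape mechanism --- insert loop blocks, justified because ``$\psi_p^z$ maps no eigendirection into any coordinate hyperplane'' --- is unjustified, since the twisting condition (B0) only controls the images of the \emph{eigendirections} of $P_t$; an arbitrary direction lying in (or near) $\WW_1^{(t)}$ that is not an eigendirection can perfectly well be sent by $\psi_p^z$ back into a bad hyperplane. The compactness/contradiction argument does not repair this: at the limit point the direction may lie exactly in $\WW_1^{(t)}$ off the eigendirections, and the assertion that the fixed word ``must succeed by the pinching/twisting conditions'' is precisely the unproven claim. What the paper supplies here, and what is missing from your proposal, is Lemma \ref{lem: turn}: a \emph{uniformly bounded} number $N$ of iterates of $P_t$ brings an arbitrary direction into an $\ep_0$-cone around \emph{some} eigendirection (proved by a finite descent through the flags), after which the twisting loop legitimately applies, and $P^\ell$ then pushes the result into $\CC(v_1^{(t)},\ep_0/2)$. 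With that lemma the connecting word is built constructively, with length bounded by constants independent of $\I,\J$, and no compactness argument is needed.

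A second, smaller but real, omission concerns simultaneity in $t$. A single fixed word cannot be chosen in advance (the needed power of $P_t$ depends on the actual singular direction of $\A_t^{\tau}(\bar\omega_0)$, which differs from cocycle to cocycle), and your alternative remark that one may take ``$c$ the product of the $\kappa$ individual constants and $k=\kappa\cdot(\text{bounded length})$'' presupposes that the block inserted to align cocycle $t$ does not destroy the alignment already achieved for cocycles $1,\dots,t-1$. That stability is exactly the paper's inductive step: once $u^{(t)}(\omega_j)\in\CC(v_1^{(t)},\ep_0/2)$, further powers of $P_t$, the rectangle holonomies (Lemma \ref{lem: lin algebra immediate}) and the loop keep the direction in that cone because $v_1^{(t)}$ is the attracting top eigendirection; your proposal never argues this. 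Finally, on the $\J$ side the paper works with the adjoint cocycle and the eigendirection $w_1^{(t)}$ of $P_t^*$, obtaining the uniform gap $\mangle(v_1^{(t)},(w_1^{(t)})^{\perp})\geq\beta>0$ fed into Lemma \ref{lem: linear algebra 2}; your ``symmetric'' treatment would need the same machinery. Note also that, per Remark \ref{rem: nonuniform K0}, only the constants $c,k$ --- not the connecting word itself --- can be chosen uniformly near $\A$, so the phrase ``the same $\K$-construction with the same $N$'' should be read as uniformity of constants, not of $\K$.
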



\begin{rem} 
The first statement is the main content of Theorem \ref{thm: qm general}; the uniform choice of the constants $c$ and $k$ follows from the fact that all parameters vary continuously on the data $\A_t$.

Although the constants $c,k$ can be chosen uniformly in a small neighborhood of each $\A_t$, we cannot necessarily choose the connecting word $\K$ uniformly. See Remark \ref{rem: nonuniform K0}.
\end{rem}

We will prove Theorem \ref{thm: qm general} in subsection \ref{subsection: pf qm general}. In subsection \ref{subsection: pf thm E}, we will then show that Theorem \ref{thm: E} follows as a corollary of Theorem \ref{thm: qm general}.

\subsection{Preliminary linear algebra}
We first collect preliminary lemmas and relevant constants needed in the proof of Theorem \ref{thm: A} and Theorem \ref{thm: qm general}. Throughout the section, $\V$ is a finite dimensional $\R$-vector space equipped with a norm $\|\cdot\|$.

\begin{defn} For $A \in \text{End}(\V)$, we choose a singular value decomposition (SVD)
$$A = U\Lambda V^\intercal,$$
where the singular values in $\Lambda$ are listed in a non-increasing order.
We define $u(A)$ and $v(A)$ as the first column of $U$ and $V$, respectively. 

If the singular values of $A$ are distinct, then the SVD of $A$ is unique (up to signs), and hence so are $u(A)$ and $v(A)$. If there are repeated singular values, then the singular value decomposition of $A$ is not necessarily unique. In this case, we simply choose a singular value decomposition of $A$, and set $u(A)$ and $v(A)$ accordingly. 
\end{defn}

Roughly speaking, $u(A)$ and $v(A)$ can be thought of as the most expanding direction of $A^*=A^\intercal$ and $A$, respectively.
From the definition, we have 
\begin{equation}\label{eq: linear algebra}
\|A\|u(A) = Av(A).
\end{equation}
Throughout the section, when we measure an angle between nonzero vectors, we mean the angle between the lines spanned by the vectors. Similarly, when we measure an angle between a nonzero vector $v$ and a hyperplane $\WW$, we mean the minimum angle $ \mangle (v,w)$ over all $w \in \WW\setminus \{0\}$.  Also, we will not distinguish between a vector in $\V \setminus \{0\}$ and its corresponding point in the projective space $\mathbb{P}(\V)$ when there is no confusion.
We have an easy lemma from linear algebra.
\begin{lem}\label{lem: linear algebra 1}
Given any $A \in \text{Aut}(\V)$ and any $w \in \V$, we have
$$\|Aw\| \geq \cos \mangle \big( w,v(A)\big) \|A\| \cdot \|w\|.$$
\end{lem}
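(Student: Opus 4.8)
\textbf{Proof proposal for Lemma \ref{lem: linear algebra 1}.}

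The plan is to reduce the estimate to the single most-expanding singular direction of $A$ and throw away the contribution of every other direction. Write a singular value decomposition $A = U\Lambda V^\intercal$ with singular values $\alpha_1 \geq \alpha_2 \geq \cdots$ listed in non-increasing order, so that $v(A)$ is the first column of $V$. Let $\{v(A) = e_1, e_2, \ldots, e_n\}$ be the orthonormal basis of $\V$ given by the columns of $V$, and decompose $w = \sum_i \langle w, e_i\rangle e_i$. Since $U$ is orthogonal, $\|Aw\|^2 = \|\Lambda V^\intercal w\|^2 = \sum_i \alpha_i^2 \langle w, e_i\rangle^2$.

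Next I would bound this sum from below by keeping only the $i=1$ term: $\|Aw\|^2 \geq \alpha_1^2 \langle w, e_1\rangle^2 = \|A\|^2 \langle w, v(A)\rangle^2$, using $\alpha_1 = \|A\|$. Taking square roots gives $\|Aw\| \geq \|A\| \cdot |\langle w, v(A)\rangle|$. Finally, since $v(A)$ is a unit vector, $|\langle w, v(A)\rangle| = \|w\| \cdot |\cos\theta|$ where $\theta$ is the ordinary angle between $w$ and $v(A)$; and because the lemma measures the angle between the \emph{lines} spanned by $w$ and $v(A)$ (as stipulated just before the lemma), that line-angle $\mangle(w, v(A))$ can be taken in $[0,\pi/2]$, so $\cos\mangle(w,v(A)) = |\cos\theta| \geq 0$. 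This yields $\|Aw\| \geq \cos\mangle(w, v(A)) \, \|A\| \cdot \|w\|$, as claimed. (The hypothesis $A \in \text{Aut}(\V)$ is not really needed beyond ensuring $\|A\| = \alpha_1 > 0$; the inequality holds for any $A \in \text{End}(\V)$, trivially when $A = 0$.)

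There is essentially no obstacle here — this is a routine linear-algebra fact, and the only mild subtlety worth stating carefully is the bookkeeping of absolute values when passing from the inner product $\langle w, v(A)\rangle$ (which may be negative) to $\cos\mangle(w,v(A))$ (which is nonnegative by the line-angle convention). Once that convention is invoked, the chain of inequalities is immediate. If one preferred to avoid SVD notation one could instead argue that $v(A)$ is a unit eigenvector of $A^\intercal A$ for its largest eigenvalue $\|A\|^2$, write $w = c\, v(A) + w'$ with $w' \perp v(A)$, and compute $\|Aw\|^2 = c^2\|A\|^2 + \|Aw'\|^2 \geq c^2\|A\|^2$, with $|c| = \|w\|\cos\mangle(w,v(A))$; this is the same proof in coordinate-free dress.
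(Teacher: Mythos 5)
Your proof is correct and essentially the same as the paper's: the paper decomposes $w = av(A) + v'$ with $v' \perp v(A)$, uses $Av(A) = \|A\|u(A)$ and $Av' \in u(A)^\perp$, and drops the orthogonal term — which is exactly the coordinate-free variant you sketch at the end, and your SVD computation is just that argument written in coordinates.
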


\begin{proof}
Let $v = v(A)$, and write $w = av+v'$ where $|a| =\|w\| \cos\mangle(w,v)$ and $v' \in v^\perp$. Letting $u = u(A)$, we have from \eqref{eq: linear algebra} that
$$Aw = a\|A\|u+Av'.$$
Since the singular vectors are pairwise orthogonal (i.e., columns of $U$ are pairwise orthogonal), we have $Av' \in u^\perp$ and the lemma follows.
\end{proof}


Recall that the \textit{co-norm} $m(A)$ of $A \in \text{GL}(\V)$ is defined by $$m(A) = \|A^{-1}\|^{-1}.$$
The following lemma will be useful in proving Theorem \ref{thm: qm general}. 

\begin{lem}\label{lem: linear algebra 2}
Let $\theta>0$ be given and $A,B,C,D \in \text{Aut}(\V)$ such that
$$\mangle \big( B^*v(A), (Cu(D))^\perp \big) >\theta.$$
Then,
$$\|ABCD\| \geq \|A\|\cdot \|D\|\cdot \sin(\theta) \frac{m(B)^2m(C)^2}{\|B\|\|C\|} .$$
\end{lem}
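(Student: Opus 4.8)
The statement is a purely linear-algebraic estimate, and I would prove it by tracking the most expanding direction through the product $ABCD$ applied from the right, using Lemma~\ref{lem: linear algebra 1} at each stage together with the crude bound $\|Xw\|\ge m(X)\|w\|$ for an automorphism $X$. The key geometric input is the angle hypothesis: it guarantees that after $D$ sends the most expanding direction of $D$ to $u(D)$, and $C$ carries $u(D)$ to $Cu(D)$, the vector $Cu(D)$ is \emph{not} nearly orthogonal to $B^*v(A)$; equivalently, $BCu(D)$ is not nearly orthogonal to $v(A)$, so $A$ still expands it nearly maximally. The loss at the intermediate stages $B$ and $C$ will be absorbed into the co-norm factors $m(B), m(C)$.

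\textbf{Key steps.} First, set $w_0 = v(D)$, so $\|w_0\|=1$ and $Dw_0 = \|D\|u(D)$ by \eqref{eq: linear algebra}. Second, apply $C$: $\|CDw_0\| = \|D\|\,\|Cu(D)\| \ge \|D\|\, m(C)$, and record the unit vector in the direction of $Cu(D)$. Third, apply $B$: $\|BCDw_0\| \ge m(B)\|CDw_0\| \ge \|D\| m(B) m(C)$. Fourth — the crucial step — I need a lower bound for $\cos\mangle(BCu(D), v(A))$ so that Lemma~\ref{lem: linear algebra 1} gives a good bound when $A$ is applied. Here I use that for an automorphism $B$ and unit vectors, the angle between $Bx$ and $y$ is controlled by the angle between $x$ and $B^*y$ up to a factor involving $\|B\|$ and $m(B)$; concretely, $\cos\mangle(Bx,y) = \dfrac{|\langle Bx,y\rangle|}{\|Bx\|\|y\|} = \dfrac{|\langle x, B^*y\rangle|}{\|Bx\|\|y\|} \ge \dfrac{m(B)}{\|B\|}\,\dfrac{|\langle x, B^*y\rangle|}{\|x\|\|B^*y\|} = \dfrac{m(B)}{\|B\|}\cos\mangle(x, B^*y)$. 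Applying this with $x$ the unit vector along $Cu(D)$ and $y = v(A)$, and using the hypothesis $\mangle(B^*v(A), (Cu(D))^\perp) > \theta$, which says exactly that $\cos\mangle(Cu(D), B^*v(A)) \ge \sin\theta$, I get $\cos\mangle(BCu(D), v(A)) \ge \frac{m(B)}{\|B\|}\sin\theta$. Fifth, apply Lemma~\ref{lem: linear algebra 1} with the vector $w = BCDw_0$:
\[
\|ABCDw_0\| \ge \cos\mangle\big(BCDw_0, v(A)\big)\,\|A\|\,\|BCDw_0\| \ge \frac{m(B)}{\|B\|}\sin\theta \cdot \|A\| \cdot \|D\| m(B)m(C).
\]
Since $\|ABCD\| \ge \|ABCDw_0\|/\|w_0\| = \|ABCDw_0\|$, and one further factor $m(C)/\|C\| \le 1$ can be inserted to match the stated form (or rather, observing that $\|C\|\ge m(C)$ lets me weaken $m(C)$ to $m(C)^2/\|C\|$), I arrive at $\|ABCD\| \ge \|A\|\,\|D\|\sin(\theta)\,\dfrac{m(B)^2 m(C)^2}{\|B\|\|C\|}$, as claimed.

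\textbf{Main obstacle.} The only genuinely nontrivial point is the fourth step: converting the hypothesis, which is phrased as an angle between $B^*v(A)$ and the \emph{hyperplane} $(Cu(D))^\perp$, into a usable lower bound on $\cos\mangle(BCu(D), v(A))$, and doing the bookkeeping of how much angle is lost when passing a vector through the automorphism $B$. The identity $\langle Bx, y\rangle = \langle x, B^*y\rangle$ is what makes this work, but one must be careful that $\mangle(v, \WW^\perp)$ for a hyperplane $\WW = u^\perp$ equals $\mangle(v, \text{span}(u))$ complementarily — i.e. that a large angle to the hyperplane means the vector is \emph{close} to the normal line $\text{span}(u)$, hence $\cos\mangle(v,u) \ge \sin\theta$ — so the inequality direction comes out right. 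Everything else is the routine chain of co-norm estimates, which I would not belabor.
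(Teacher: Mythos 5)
Your proof is correct and follows essentially the same route as the paper: both rest on the adjoint identity $\langle Bx,y\rangle=\langle x,B^{*}y\rangle$ together with reading the hypothesis as $\cos\mangle\big(Cu(D),B^{*}v(A)\big)\geq\sin\theta$ to lower-bound $\cos\mangle\big(BCu(D),v(A)\big)$, and then push $v(D)$ through the product using Lemma \ref{lem: linear algebra 1} and co-norm estimates. Your bookkeeping in fact yields the slightly sharper intermediate bound $\|A\|\,\|D\|\sin(\theta)\,m(B)^{2}m(C)/\|B\|$, which you correctly weaken (via $m(C)\leq\|C\|$) to the stated form.
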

\begin{proof}
We have
\begin{align*}
\|BCu(D)\|\cos \mangle \big( BCu(D), v(A)\big)&=\langle v(A),BCu(D)\rangle,\\ 
&= \langle B^*v(A), Cu(D) \rangle,\\ 
&\geq \|B^*v(A)\|\|Cu(D)\|\sin(\theta).
\end{align*}
Hence, $$\cos \mangle \big( BCu(D), v(A)\big)\geq \sin(\theta) \frac{m(B)m(C)}{\|B\|\|C\|}.$$
It then follows from \eqref{eq: linear algebra} and Lemma \ref{lem: linear algebra 1} that 
\begin{align*}
\|ABCD\| \geq \|ABCDv(D)\| &= \|D\|\cdot \|ABCu(D)\|,\\
&\geq  \|D\|\cdot \cos \mangle \big( BCu(D), v(A)\big) \|A\|\cdot \|BCu(D)\|,\\
&\geq \|A\|\cdot \|D\|\cdot \sin(\theta) \frac{m(B)m(C)}{\|B\|\|C\|} \cdot m(BC).
\end{align*}
This completes the proof.
\end{proof}
%

We will also make use of the adjoint cocycle. For a cocycle $F_\A$ generated by a $\text{GL}(\V)$-valued function $\A$ over $f$, we define the \textit{adjoint cocycle} $F_{\A}^*$ over $f^{-1}$ generated by $\A_*$ where $\A_*$ is defined by the relation
\begin{equation}\label{def: adjoint defn}
\langle\A_*(x)u, v \rangle = \langle u , \A(f^{-1}x)v \rangle ~~\text{ for all } x\in \Sig \text{ and } u,v \in \V.
\end{equation}

Suppose $F_\A$ admits holonomies $H^{s/u}$. Then the adjoint cocycle $F_\A^*$ also admits holonomies given by $$H^{s,*}_{x,y} = (H^{u}_{y,x})^*~\text{ and }~H^{u,*}_{x,y} = (H^s_{y,x})^*.$$
This can be easily seen by plugging $u = (H^s_{x,y})^*\widetilde{u}$ and $v = H^s_{f^{-1}y,f^{-1}x} \widetilde{v}$ into \eqref{def: adjoint defn} for some $y$ in the stable set of $x$ with respect to $f$. 
The following lemma shows that many properties of $\A$ carry over to $\A_*$. 
\begin{lem}\label{lem: adjoint typical}
Let $\A \in C^\alpha(\Sig,\glr)$. Then, 
\begin{enumerate} 
\item $F_\A^*$ is fiber-bunched if and only if $F_\A$ is fiber-bunched.
\item $\A_*$ is 1-typical if and only if $\A$ is 1-typical. 
\item $\A_*$ is typical if and only if $\A$ is typical.
\end{enumerate}
\end{lem}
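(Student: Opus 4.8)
\textbf{Proof proposal for Lemma \ref{lem: adjoint typical}.}

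The plan is to verify each of the three equivalences by tracking how the relevant structures for $\A$ transform under passing to the adjoint $\A_*$. For part (1), I would start from the defining relation \eqref{def: adjoint defn}, which gives $\A_*(x) = \A(f^{-1}x)^\intercal$ (with respect to the standard inner product), so $\|\A_*(x)\| = \|\A(f^{-1}x)\|$ and $\|\A_*(x)^{-1}\| = \|\A(f^{-1}x)^{-1}\|$. Since the base map for $F_\A^*$ is $f^{-1}$, which has the same hyperbolicity constant $\theta$ and the same local stable/unstable sets (with the roles of $s$ and $u$ swapped), the fiber-bunching inequality $\|\A_*(x)\|\|\A_*(x)^{-1}\|\theta^\alpha < 1$ holds for all $x$ precisely when the corresponding inequality holds for $\A$. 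H\"older continuity of $\A_*$ with exponent $\alpha$ is immediate from that of $\A$ since transposition is an isometry and $f^{-1}$ is bi-Lipschitz on $\Sig$ with the chosen metric.

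For parts (2) and (3), the key point is to identify the periodic point, homoclinic point, and holonomy loop for $\A_*$. The period-$\mathrm{per}(p)$ return cocycle for $F_\A^*$ at $p$ (now under $f^{-1}$) is $P_* = \big(\A_*^{\mathrm{per}(p)}(p)\big) = P^\intercal$ up to the obvious reindexing, where $P = \A^{\mathrm{per}(p)}(p)$; so $P_*$ has simple real eigenvalues of distinct norms iff $P$ does, establishing (A0) for $\A_*$, and its eigenvectors are related to those of $P$ via the inner product (they span the same flag structure). The twisting condition requires more care: using the formulas $H^{s,*}_{x,y} = (H^u_{y,x})^*$ and $H^{u,*}_{x,y} = (H^s_{y,x})^*$ recorded just before the lemma, the holonomy loop for $\A_*$ at $p$ through the homoclinic point $z$ (which is still homoclinic for $f^{-1}$, with stable and unstable sets interchanged) works out to $\psi^{z}_{p,*} = H^{s,*}_{z,p}\circ H^{u,*}_{p,z} = (H^u_{p,z})^* \circ (H^s_{z,p})^* = \big(H^s_{z,p}\circ H^u_{p,z}\big)^* = (\psi_p^z)^*$. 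Now I claim: $(\psi_p^z)^*$ sends the eigenvectors of $P^\intercal$ into general position relative to the eigenvectors of $P^\intercal$ iff $\psi_p^z$ does so relative to those of $P$. This is a linear-algebra fact: if $\{v_i\}$ are the eigenvectors of $P$ and $\{w_j\}$ the eigenvectors of $P^\intercal$, then $\langle v_i, w_j\rangle = 0$ for $i \neq j$ (eigenvectors for distinct eigenvalues of $A$ and $A^\intercal$ are orthogonal), so $w_j$ spans the line orthogonal to $\mathrm{span}\{v_i : i \neq j\} = \WW_j$; the coefficient $c_{i,j}$ of $\psi_p^z(v_i)$ along $v_j$ is nonzero iff $\langle \psi_p^z(v_i), w_j\rangle \neq 0$ iff $\langle v_i, (\psi_p^z)^* w_j\rangle \neq 0$, which is exactly the statement that $(\psi_p^z)^*(w_j)$ has nonzero $v_i$-component in the eigenbasis, i.e. avoids the hyperplane spanned by $\{w_\ell : \ell \neq i\}$. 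Hence (B0) transfers, giving (2).

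For part (3), I would repeat the argument on exterior powers, using that $(\A_*)^{\wedge t} = (\A^{\wedge t})_*$ (transposition commutes with $\wedge^t$, as recorded in the exterior-algebra subsection: $(A^{\wedge t})^\intercal = (A^\intercal)^{\wedge t}$), that $(P^\intercal)^{\wedge t} = (P^{\wedge t})^\intercal$ so (A1) transfers verbatim, and that the induced holonomy loop $(\psi^z_{p,*})^{\wedge t} = ((\psi_p^z)^{\wedge t})^*$, so (B1) transfers by the same orthogonality-of-eigenvectors argument applied in $(\V)^{\wedge t}$ (noting that the eigenvectors of $(P^{\wedge t})^\intercal$ are wedges of eigenvectors of $P^\intercal$, and the pairing with wedges of eigenvectors of $P$ is again diagonal). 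The main obstacle — and the only step that is not bookkeeping — is the linear-algebra lemma asserting that the twisting (general position) condition is self-dual under $A \mapsto A^*$; once that is phrased cleanly via the orthogonality $\langle v_i, w_j\rangle = \delta_{ij}\langle v_i,w_i\rangle$ and the adjunction $\langle \psi v_i, w_j\rangle = \langle v_i, \psi^* w_j\rangle$, everything else follows by substituting $f^{-1}$ for $f$ and swapping the superscripts $s \leftrightarrow u$ throughout.
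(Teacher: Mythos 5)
Your proposal is correct and follows essentially the same route as the paper: the core step is identical, namely identifying the eigendirections of $P^\intercal$ as $w_j=(\WW_j)^\perp$ and using the adjunction $\langle \psi_p^z(v_i),w_j\rangle=\langle v_i,(\psi_p^z)^*w_j\rangle$ together with $\psi_p^{z,*}=(\psi_p^z)^*$ to transfer the twisting condition, with (3) obtained by the same argument on exterior powers. The only cosmetic differences are that for (1) the paper simply cites Lemma 7.2 of \cite{bonatti2004lyapunov} where you give the short direct computation, and for (3) the paper leaves the exterior-power bookkeeping implicit.
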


\begin{proof}
See Lemma 7.2 of \cite{bonatti2004lyapunov} for the proof of (1). The setting in \cite{bonatti2004lyapunov} is $\slr$-valued cocycles, but the proof readily extends to $\glr$-valued cocycles.
For (2), we note that the eigenvalues of the adjoint matrix $P^*$ are equal to the eigenvalues of $P$; in particular, they are simple and distinct in modulus. Indeed, if we define $\WW_j$ to be the hyperplane spanned by all but the $j$-th eigenvector $v_j$ of $P$, then the $j$-th eigendirection of $P^*$ is given by $w_j :=(\WW_j)^\perp$: for any $1 \leq i \neq j \leq d$, we have
$$
\langle v_i, P^*w_j\rangle = \langle Pv_i,w_j \rangle= \lambda_i \langle v_i, w_j \rangle = 0.
$$
The twisting condition (B0) from Definition \ref{defn: 1-typical} is then equivalent to 
$$\langle \psi_p^z(v_i),w_j\rangle \neq 0 \text{ for all }1 \leq i,j \leq d.$$
Hence, $\langle v_i ,(\psi_p^z)^*w_j \rangle \neq 0$ for all $1 \leq i,j \leq d$; this is equivalent to $\A_*$ being 1-typical because $\psi_p^{z,*} =( \psi_p^z)^*$.
(3) then trivially follows from (2).
\end{proof}

For $v \in \mathbb{P}(\V)$, let the \textit{cone around $v$ of size $\ep$} be 
$$\CC(v,\ep):=\{w \in \mathbb{P}(\V) \colon \mangle (v,w ) < \ep\}.$$

If $P \in \text{GL}(\V)$ has simple eigenvalues of distinct norms, then any $v \in \mathbb{P}(\V)$ can be mapped close to one of the eigendirections of $P$ under iterations of $P$. Even though the number of iterations needed depends on the given direction $v$, the following lemma shows that such number of iterations can be uniformly bounded above, independent of $v$. A quick illustration of ideas in $\mathbb{P}(\R^3)$ is as follows: suppose $\{v_i\}_{1\leq i \leq 3}$ are eigendirections of $P$ with $|\lambda_1|>|\lambda_2|>|\lambda_3|$. If given $v$ is already close to some $v_i$, then no iteration of $P$ is necessary. If not, then a large but bounded number of iterations of $P$ will either map $v$ close to one of the $v_i$'s or map it out of the $\ep$-neighborhood of span$\{v_2,v_3\}$ for some fixed $\ep>0$, in which case further bounded number of iterations of $P$ will map it close to $v_1$.

\begin{lem}\label{lem: turn}
Suppose $\V$ is $d$-dimensional, and $P \in \text{GL}(\V)$ has simple eigenvalues of distinct norms with corresponding eigenvectors $\{v_i\}_{1 \leq i \leq d}$. Given $\ep>0$, there exists $N = N(\ep)\in \N$ such that for any $v \in \mathbb{P}(\V)$, there exists $n = n(v) \leq N$ such that 
$$P^nv \in \bigcup\limits_{i=1}^d\CC(v_i,\ep).$$ 
\end{lem}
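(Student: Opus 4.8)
The plan is to argue by contradiction using compactness of projective space. Suppose no such uniform $N$ exists. Then for each $n \in \N$ there is a point $w_n \in \mathbb{P}(\V)$ such that $P^m w_n \notin \bigcup_{i=1}^d \CC(v_i,\ep)$ for all $0 \le m \le n$. By compactness of $\mathbb{P}(\V)$, pass to a subsequence so that $w_n \to w \in \mathbb{P}(\V)$. By continuity of the iterates, for every fixed $m$ we get $P^m w \notin \bigcup_{i=1}^d \CC(v_i,\ep/2)$ (any limit point of directions staying outside the open $\ep$-cones stays outside the closed $\ep/2$-cones, say). So it suffices to rule out the existence of a single $w$ whose entire forward $P$-orbit avoids a fixed neighborhood of $\{v_1,\dots,v_d\}$.

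To rule this out, I would use the standard north–south / Perron dynamics of a matrix with simple eigenvalues of distinct norms. Order $|\lambda_1| > \dots > |\lambda_d|$ and write $w = \sum_{i} a_i v_i$. Let $j$ be the smallest index with $a_j \ne 0$. Then in the eigenbasis, $P^n w = \lambda_j^n\big(a_j v_j + \sum_{i>j} a_i (\lambda_i/\lambda_j)^n v_i\big)$, and since $|\lambda_i/\lambda_j| < 1$ for $i > j$, the bracketed vector converges to $a_j v_j \ne 0$; hence $P^n w \to v_j$ in $\mathbb{P}(\V)$. In particular $P^n w \in \CC(v_j,\ep/2)$ for all large $n$, contradicting that the orbit of $w$ avoids all the $\ep/2$-cones. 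This contradiction proves the lemma, and the integer $n(v) \le N$ is obtained for each individual $v$ simply because $P^n v$ eventually enters some $\CC(v_i,\ep)$ and $N$ is the uniform bound just established.

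Alternatively — and this may be cleaner to write — one can avoid the double limiting argument by making the convergence rate explicit in a way that is uniform off a bad set. Fix $\ep > 0$ and let $\rho := \max_{i} |\lambda_{i+1}/\lambda_i| < 1$. For a direction $v = \sum a_i v_i$ with smallest nonzero coefficient index $j$, the "escape time" into $\CC(v_j,\ep)$ is controlled by how small $|a_j|$ is relative to the norm of the tail, i.e. roughly $n \gtrsim \log(\text{tail}/|a_j|)/\log(1/\rho)$. The only way this can fail to be bounded is if $|a_j|$ can be arbitrarily small, i.e. $v$ is arbitrarily close to $\mathrm{span}\{v_{j+1},\dots,v_d\}$. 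One then handles this by an inductive/stratification argument: a large bounded number of iterates either already succeeds, or pushes $v$ definitely away from $\mathrm{span}\{v_{j+1},\dots,v_d\}$ (since the component along $v_j$ grows relative to the lower components at rate bounded below), reducing to a situation where $|a_j|$ is bounded below and the explicit estimate applies. This is exactly the $\mathbb{P}(\R^3)$ picture sketched before the lemma, made into a downward induction on $j = d, d-1, \dots, 1$.

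The main obstacle is purely bookkeeping: making the "bounded number of iterations either succeeds or escapes the degenerate subspace" step quantitative and uniform, since the relevant thresholds (the $\ep$-neighborhood of each coordinate subspace, the gap $\rho$, and the number of strata) all interact. The contradiction-plus-compactness route sidesteps this entirely at the cost of being non-constructive, so I would present that as the primary proof and perhaps remark that an effective bound follows from the eigenvalue gaps.
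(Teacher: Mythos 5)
Your primary argument (compactness of $\mathbb{P}(\V)$ plus contradiction, then Perron dynamics for a single limiting direction) is correct: the cones $\CC(v_i,\ep)$ are open, so a limit of directions whose first $n$ iterates avoid them has its entire forward orbit in the closed complement, while writing $w=\sum_i a_iv_i$ and taking $j$ minimal with $a_j\neq 0$ forces $P^nw\to v_j$ in $\mathbb{P}(\V)$ (the eigenvalues are real since they have distinct norms), a contradiction. This is, however, a genuinely different route from the paper's proof, which is exactly your ``alternative'': a quantitative downward induction on the index $j$, choosing $\eta=\eta(\ep)$ so that being $\eta$-close to every hyperplane $\WW_i$, $i\neq j$, forces membership in $\CC(v_j,\ep)$, and a single $m$ so that $\mangle(v,\WW_j)>\eta$ implies $P^mv$ is $\eta$-close to all $\WW_i$ with $i\geq j+1$; the strictly decreasing sequence of indices terminates in at most $d-1$ steps, giving the explicit bound $N=(d-1)m$. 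What the constructive proof buys, and what your compactness route gives up, is that $m$ (hence $N$) depends only on the spectral gaps and the eigenbasis geometry, which vary continuously in $P$; this is precisely what the paper invokes in Remark \ref{rem: N uniform} to make $N$ uniform over a neighborhood of $P$, and that uniformity feeds into the uniform choice of $c,k$ near $\A$ in Theorems \ref{thm: A} and \ref{thm: E}. If you keep the contradiction proof as primary, you should either note that the same compactness argument can be run jointly in $(\widetilde P,v)$ over a compact neighborhood of $P$ (using continuity of the eigendirections) to recover the remark, or fall back on your quantitative sketch for that purpose; as a proof of the lemma as stated, though, your argument is complete.
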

\begin{proof}
Without loss of generality, suppose that the eigenvalues $\{\lambda_i\}_{1 \leq i \leq d}$ of $P$ corresponding to $\{v_i\}_{1\leq i \leq d}$ are decreasing in modulus. We adopt the same notation as in Lemma \ref{lem: adjoint typical} and define $\WW_j$ to be the hyperplane spanned by all but the $j$-th eigenvector $v_j$. 
We make a few observations:
\begin{enumerate}
\item 
If $v$ is close to each hyperplane $\WW_i$ for every $i \neq j$, then $v$ has to be close to $v_j$. 
We fix $\eta>0$ depending on the given $\ep>0$ such that the following holds: if $v \in \mathbb{P}(\V)$ with $\mangle(v,\WW_i)<\eta$ for all $i \in \{1,\ldots, j-1,j+1,\ldots,d\}$, then $v \in  \CC(v_j,\ep)$.

\item Let $v = \sum\limits_{i=1}^d c_iv_i$. If the angle $\mangle(v,W_j)$ is not too small, then the ratio $|c_j/c_i|$ is not too small (if $c_i=0$, the ratio is $\infty$) for every $i$. Since $|\lambda_j| > |\lambda_{i}|$ for all $i \geq j+1$, we choose some large $m\in \N$ such that $|\lambda_j^mc_j/\lambda_i^mc_i|$ is sufficiently large for all $i \geq j+1$; this implies that $P^mv$ makes a small angle with each $\WW_i$ for $i \geq j+1$.

Formally, for $\eta>0$ chosen in (1), we choose $m\in \N$ such that for any $1\leq j \leq d$, if $\mangle(v, \WW_j)>\eta$, then the angle $P^mv$ makes with each hyperplane $\WW_i$ with $ i\geq j+1$ is at most $\eta$. 
The existence of such $m\in \N$ follows from the simplicity of the eigenvalues of $P$. 

\end{enumerate}
We claim that for any $v \in \mathbb{P}(\V)$, there exists $0 \leq k \leq d-1$ with $P^{km}v \in \bigcup\limits_{i=1}^d\CC(v_i,\ep)$. 

If $w_0:=v \in \bigcup\limits_{i=1}^d\CC(v_i,\ep)$, then there is nothing to be done; we set $k=0$. 

If $w_0 \not\in \bigcup\limits_{i=1}^d\CC(v_i,\ep)$, then we find the smallest $j_0 \in \N$ such that $\mangle(w_0,\WW_{j_0})>\eta$. From the choice of $m$, the angle $w_1:=P^mw_0$ makes with each hyperplane $\WW_i$, $i = j_0+1,\ldots,d$ is smaller than $\eta$. 

If $w_1 \in \bigcup\limits_{i=1}^d\CC(v_i,\ep)$, then we set $k = m$. If not, from $w_1 \not\in \CC(v_{j_0},\ep)$ and (1), there exists some $i \in \{1,\ldots, j_0 - 1 , j_0+1,\ldots,d\}$ such that $\mangle(w_1,\WW_i)>\eta$. Since we already know $w_1$ makes an angle less than $\eta$ with each $\WW_i$ with $i\geq j_0+1$, such $i$ is necessarily smaller than $j_0$. We then set $j_1$ to be the smallest number (necessarily smaller than $j_0$) among such $i$. Again from the choice of $m$, the angle $w_2:=P^mw_1$ makes with each $\WW_i$, $i=j_1+1,\ldots,d$ is smaller than $\eta$.

We repeat the process inductively as follows: given $w_n:=P^mw_{n-1}$ from the previous step, we set $k = nm$ if $w_n \in \bigcup\limits_{i=1}^d\CC(v_i,\ep)$. If not, from $w_n \not\in \CC(v_{j_{n-1}},\ep)$ and (1), we can necessarily find some $i$ smaller than $j_{n-1}$ such that $\mangle(w_n,\WW_i)>\eta$. We set $j_n$ to be the smallest such $i$. Then, $w_{n+1}:=P^mw_{n}$ makes an angle less than $\eta$ with $\WW_i$, $i =j_n+1,\ldots,d$.

We continue this process until $j_n = 1$. From the construction, $\mangle(w_{n+1},\WW_i)<\eta$ for all $i=2,\ldots,d$, which implies that $w_{n+1} \in \CC(v_1,\ep)$. Note that $j_0 \leq d-1$, because if $j_0$ were equal to $d$, then $w_0$ must have been in $\CC(v_d,\ep)$ contradicting $w_0 \not\in \bigcup\limits_{i=1}^d\CC(v_i,\ep)$. Since $\{j_n\}$ is a strictly decreasing sequence bounded below by 1, the inductive process necessarily terminates in at most $d-1$ steps. We complete the proof by setting $N:=(d-1)m$.
\end{proof}

\begin{rem}\label{rem: N uniform}
Since the eigenvalues of $P$ from Lemma \ref{lem: turn} vary continuously in $P$, we can choose $N$ to work uniformly near $P$: given $\ep>0$, there exists $N\in \N$ such that for any $\widetilde{P} \in \text{GL}(\V)$ sufficiently close to $P$ and any $v \in \mathbb{P}(\V)$, there exists $n  = n(v,\widetilde{P}) \leq N$ such that $\widetilde{P}^nv \in \bigcup\limits_{i=1}^d\CC(\widetilde{v}_i,\ep)$ where $\{\widetilde{v}_i\}_{1\leq i \leq d}$ are distinct eigendirections of $\widetilde{P}$. 
\end{rem}

In the following lemma, we also adopt the same notations from Lemma \ref{lem: adjoint typical}.

\begin{lem}\label{lem: lin algebra immediate}
Let $\ep>0$ be given, and suppose $P,\psi,R \in \text{GL}(\V)$ and $\ell \in \N$ satisfy the following properties:
\begin{itemize}
\item $P$ has simple real eigenvalues of distinct norms, 
\item For any $v \in \bigcup\limits_{i=1}^d\CC(v_i,\ep)$, we have $\mangle(\psi (v),\WW_i) >\ep$ for each $i$,
\item $\mangle(R(v),v) <\ep/2$ for any $v\in \mathbb{P}(\V)$,
\item For any $v\in \mathbb{P}(\V)$ with $\mangle(v,\WW_i)>\ep$ for each $i$, we have $P^\ell v \in \CC(v_1,\ep/2)$.
\end{itemize}
Then for any $v \in \bigcup\limits_{i=1}^d\CC(v_i,\ep/2)$, we have
$$P^\ell \psi R (v) \in \CC(v_1,\ep/2).$$
\end{lem}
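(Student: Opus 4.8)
The plan is to chain the four hypotheses in the order $R$, then $\psi$, then $P^{\ell}$, tracking which cone or hyperplane-neighborhood the iterate occupies at each stage. So I would fix $v \in \bigcup_{i=1}^{d}\CC(v_i,\ep/2)$ and choose an index $i_0$ with $\mangle(v,v_{i_0})<\ep/2$; everything then reduces to following this single $v$ through the three maps.

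First I would handle the $R$-step. Using that the line-angle $\mangle$ is a metric on $\mathbb{P}(\V)$ — in particular that it obeys the triangle inequality — together with the hypothesis $\mangle(R(v),v)<\ep/2$, I get $\mangle(R(v),v_{i_0})\le \mangle(R(v),v)+\mangle(v,v_{i_0})<\ep$, so $R(v)\in\CC(v_{i_0},\ep)\subseteq\bigcup_{i=1}^{d}\CC(v_i,\ep)$. Next, the $\psi$-step is immediate: applying the second (twisting) hypothesis to $R(v)\in\bigcup_{i=1}^{d}\CC(v_i,\ep)$ yields $\mangle(\psi R(v),\WW_i)>\ep$ for every $i$. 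Finally, the $P^{\ell}$-step: the vector $\psi R(v)$ now satisfies precisely the assumption of the fourth bullet, so $P^{\ell}\psi R(v)\in\CC(v_1,\ep/2)$, which is the claimed conclusion.

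I do not expect a serious obstacle here beyond bookkeeping; the lemma is essentially a packaging statement that isolates the combinatorial skeleton (turn close to an eigendirection of $P$ $\to$ twist away from all the hyperplanes $\WW_i$ $\to$ contract to the top eigendirection $v_1$) to be reused in the proof of Theorem \ref{thm: qm general}. The only two points worth spelling out are that the angle between lines genuinely satisfies the triangle inequality, so the near-identity map $R$ enlarges a cone of size $\ep/2$ only to one of size $\ep$, and that the universally-quantified "for each $i$" in the second and fourth hypotheses line up, so the output of the $\psi$-step feeds directly into the $P^{\ell}$-step without loss.
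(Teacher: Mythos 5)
Your proposal is correct and is exactly the argument the paper has in mind: the paper's proof consists of the single line "The proof is immediate from the properties of $P,\psi,R$ and $\ell$," and your chaining of the three hypotheses in the order $R$, $\psi$, $P^{\ell}$ (using the triangle inequality for the projective angle to pass from the $\ep/2$-cone to the $\ep$-cone after applying $R$) is the intended bookkeeping. Nothing further is needed.
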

\begin{proof}
The proof is immediate from the properties of $P,\psi,R$ and $\ell$.
\end{proof}

\subsection{Proof of Theorem \ref{thm: A}}\label{subsection: pf thm A}
In this subsection, we prove Theorem \ref{thm: A}.

\begin{thm*}[Theorem \ref{thm: A}]
Every $\A \in \U$ is quasi-multiplicative. Moreover, the constants $c,k$ in \eqref{eq: qm intro} can be chosen uniformly in a neighborhood of $\A$ in $\U$. 
\end{thm*}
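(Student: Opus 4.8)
The plan is to prove Theorem \ref{thm: A} as the $\kappa=1$, $\V_1 = \R^d$ case of Theorem \ref{thm: qm general}, with $\A_1 = \A$, using that a typical $\A$ admits (by definition) a fixed point $p$ — or a periodic point, which after passing to a power of $f$ and recoding we may assume is fixed — and a homoclinic point $z$ realizing the pinching (A0) and twisting (B0) conditions, and that fiber-bunched \hol cocycles admit uniformly continuous canonical holonomies \eqref{eq: hol holder}. So the real work is the argument for Theorem \ref{thm: qm general} in this single-space case, which I would organize as follows. Fix the data $P = \A^{\mathrm{per}(p)}(p)$ with eigenvectors $\{v_i\}$, eigendirections, hyperplanes $\WW_j$, and the holonomy loop $\psi = \psi_p^z$. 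Using the twisting condition, fix $\ep>0$ small enough that $\psi$ carries each cone $\CC(v_i,\ep)$ into the region where every angle to every $\WW_j$ exceeds $\ep$ (this is possible since (B0) says $\psi(v_i)\notin \WW_j$ for all $i,j$, and both $\psi$ and the cone condition are continuous/open). Then invoke Lemma \ref{lem: turn} (and Remark \ref{rem: N uniform}) to get $N = N(\ep)$ such that $P^n$ sweeps any direction into $\bigcup_i \CC(v_i,\ep/2)$ within $n\le N$ iterates, and Lemma \ref{lem: lin algebra immediate} to see that the composite $P^\ell \psi R$ sends $\bigcup_i \CC(v_i,\ep/2)$ into $\CC(v_1,\ep/2)$ for a suitable fixed $\ell$ and any $R$ that moves directions by less than $\ep/2$.

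The connecting word $\K = \K(\I,\J)$ is then built as a concatenation of four pieces, all of bounded length: (i) a block of $q_1 \le N$ iterates of the periodic word at $p$, chosen so that the image of the most-expanding direction $v(\A(\J))$ of $\A(\J)$ — transported to the $p$-fiber by the appropriate holonomy — lands in some cone $\CC(v_i,\ep)$; (ii) the finite word joining $p$ to $z$ along $\W^u(p)$ and then back from $z$ to $p$ along $\W^s(p)$ (realizing $\psi_p^z$ up to holonomy corrections), which has a fixed length depending only on $p,z$; (iii) the word $p^{\ell}$ realizing $P^\ell$ to push everything into $\CC(v_1,\ep)$; and (iv) another block of at most $N$ iterates at $p$ arranged so that $u(\A(\I))$, transported to the $p$-fiber, is $\ep$-close to the image of $v_1$. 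Because $p$ (hence the periodic word) and $z$ (hence the joining word) are fixed once and for all, $|\K|\le k$ for a uniform $k$. Having arranged the directions, the estimate $\|\A(\I\K\J)\| \geq c\|\A(\I)\|\,\|\A(\J)\|$ comes from Lemma \ref{lem: linear algebra 1} and, more precisely, from a chained application in the spirit of Lemma \ref{lem: linear algebra 2}: writing $\A^{|\I\K\J|}(x) = \A^{|\I|}(\cdot)\,\A^{|\K|}(\cdot)\,\A^{|\J|}(\cdot)$ and inserting holonomies to replace the $\K$-block by the fixed linear maps $P^{q_1}$, $\psi_p^z$, $P^\ell$, $P^{q_2}$, one gets that the most expanding direction of $\A(\J)$ is mapped by the $\K$-block to a direction making a definite (bounded below by $\sin$ of a fixed angle) angle with the hyperplane orthogonal to $u(\A(\I))$, which is exactly the hypothesis needed to conclude. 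All the "lost" factors — the co-norms of the holonomies and of the bounded-length $\K$-blocks, the fixed angle, the $\vps$-distortion from the holonomies (Lemma \ref{lem: bdd distortion}) — are uniformly bounded away from $0$, giving a uniform $c$.

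Finally, for the uniformity in a neighborhood of $\A$: since $p$ is a fixed (hyperbolic) periodic point and $z$ a fixed homoclinic point, both persist for nearby $\B\in\U$, the matrices $P_\B = \B^{\mathrm{per}(p)}(p)$ and holonomy loops $\psi_{p,\B}^z$ vary continuously (canonical holonomies vary continuously in the cocycle, \eqref{eq: hol holder} and the discussion after it), the pinching/twisting conditions are open, and Remark \ref{rem: N uniform} gives a locally uniform $N$; hence $\ell$, the length bound $k$, and every constant feeding into $c$ can be taken uniform on a neighborhood. I expect the main obstacle to be bookkeeping: correctly keeping track of which holonomy (stable vs.\ unstable, local vs.\ global) is needed to pass between the fibers over the relevant points and over $p$, ensuring each insertion of a holonomy only costs a uniformly bounded multiplicative constant, and making sure the two "turning" blocks of $P$-iterates can be arranged on both the $\I$-side (targeting $u(\A(\I))$ via the adjoint/$\A_*$ picture, using Lemma \ref{lem: adjoint typical}) and the $\J$-side without the required angle condition degenerating; the linear-algebra estimates themselves are then routine given Lemmas \ref{lem: linear algebra 1}--\ref{lem: lin algebra immediate}.
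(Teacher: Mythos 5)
The overall frame you set up (deducing Theorem \ref{thm: A} from the $\kappa=1$ case of Theorem \ref{thm: qm general}, turning blocks of bounded $P$-iterates via Lemma \ref{lem: turn}, the twisting loop plus $P^\ell$, the angle estimate of Lemma \ref{lem: linear algebra 2}, and uniformity from continuity of holonomies and Remark \ref{rem: N uniform}) is the paper's, but your connecting word has a genuine gap: it contains only \emph{one} twisting excursion (one copy of $\psi_p^z$ followed by $P^\ell$), shared between the two sides, whereas the argument needs one excursion \emph{per side}. Writing your model middle block in matrix order as $P^{q_2}P^\ell\psi_p^z P^{q_1}$, the quantity that must be bounded below in Lemma \ref{lem: linear algebra 2} is essentially $\langle (P^*)^{q_2}\,v',\; P^\ell\psi_p^z P^{q_1}\,u'\rangle$, where $u'$ is the transported $u(\A(\I))$ and $v'$ the transported $v(\A(\J))$. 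The forward factor can indeed be steered into $\CC(v_1,\ep)$ by turning, twisting and $P^\ell$. But on the $\J$-side, bounded iterates of $P^*$ only bring $v'$ near \emph{some} eigendirection $w_i$ of $P^*$ (Lemma \ref{lem: turn} again), and $\langle w_i,v_1\rangle=0$ for every $i\neq 1$, since $w_i=\WW_i^\perp$ and $v_1\in\WW_i$. If, say, $v(\A(\J))$ is (close to) $w_2$, no choice of $q_2\le N$ helps, the angle hypothesis of Lemma \ref{lem: linear algebra 2} degenerates, and no uniform $c$ comes out of this construction. This is exactly why the paper runs the whole ``turn $+$ twist $+$ $P^\ell$'' routine a second time for the adjoint cocycle $\A_*$ (Lemma \ref{lem: adjoint typical}), along the homoclinic orbit traversed in the adjoint sense ($\hat z$), so that the $\J$-side direction lands near $w_1$; the uniform transversality is then the fixed pinching angle $\beta=\mangle(v_1,\WW_1)>0$ between $v_1$ and $w_1^\perp$, not any relation between $v_1$ and the arbitrary direction $v(\A(\J))$. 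Your piece (iv) is the symptom of the same issue: a bounded block of $P$-iterates cannot make an arbitrary transported direction $\ep$-close to $v_1$ (or to the image of $v_1$); Lemma \ref{lem: turn} only reaches \emph{some} $v_i$ (resp.\ $w_i$), and upgrading ``some $v_i$'' to ``$v_1$'' is precisely what the twisting loop is for --- on each side separately.

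Two secondary points, fixable but worth noting. First, $\I$ cannot in general be followed immediately by the $p$-word: primitivity must be used to insert connecting words of length $\le\bar{\tau}$ at both ends, and more importantly $\K$ has to be realized as the itinerary of an actual shadowing point (the paper constructs $\bar{\omega}_\I$, $\hat{\iota}_\J$ and the bracket $\chi=[\widetilde{\iota}_\J,\widetilde{\omega}_\I]$) so that the matrices along $\K$ equal $P^{n}$, $\psi_p^z$ (via \eqref{eq: psi_p^z}), $P^\ell$ only up to holonomy and rectangle corrections that are uniformly small; a formal concatenation of words does not by itself produce these identities. Second, the roles of the two sides are blurred in your (i)--(iv): the adjoint/$\A_*$ picture belongs to the $\J$-side (the block that multiplies on the left, whose relevant direction is $v(\A(\J))$), while the $\I$-side is handled with the forward cocycle acting on $u(\A(\I))$. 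The uniformity-in-$\A$ discussion is fine once the construction is corrected to include the second, adjoint-side twisting excursion.
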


\begin{proof}[Proof of Theorem \ref{thm: A}]
Given $\A \in \U$, we will set uniform constants $c$ and $k$ such that for any given $\I,\J\in \L$, there exists $\K \in \L$ with $|\K| \leq k$ such that quasi-multiplicativity \eqref{eq: qm intro} holds.

Let $p$ and $z$ be the periodic and homoclinic point given by the hypothesis. 
For simplicity, we assume that $p$ is a fixed point of $f$. In the case where the reference point $p$ is a periodic point, we replace $f$ by its suitable power so that $p$ becomes a fixed point and the proof readily extends with relevant modifications. 
From Remark \ref{rem: homoclinic pts}, we also assume that $z$ is on $\Wloc^u(p)$.
\\\\
\textbf{Step 1.} We begin by setting up the notations and constants to be used in the proof.

\begin{itemize}
\item For any $(\omega,n) \in \Sig \times \N$, we identify it with the orbit segment starting at $\omega$ of length $n$.

\item Let $\{v_i\}_{1 \leq i \leq d}$ be eigendirections of $P = \A(p)$ listed in the order of decreasing modulus of eigenvalues. Similarly, we denote the eigendirections of $P_*:=\A_*(p)$ by$\{w_i\}_{1\leq i \leq d}$.
We define $\WW_j$ be the hyperplane in $\R^d$ spanned by all $v_i$'s except $v_j$. As in the proof of Lemma \ref{lem: adjoint typical}, we have $w_j=( \WW_j)^\perp$ for each $1\leq j \leq d$.

\item The angle formed by the top eigendirections $v_1$ and $w_1$ of $P$ and $P_*$ is necessarily bounded away from $\pi/2$.
Let
$$\beta := \mangle(v_1,w_1^\perp) = \mangle(v_1,\WW_1)>0.$$

\item The twisting condition (B0) implies that there exists $\ep_0>0$ such that
\begin{equation}\label{eq: uniform twist 0}
\mangle\big(\psi_p^z(v),\WW_j \big)>\ep_0,
\end{equation}
for all $1 \leq j \leq d$ whenever $v \in \bigcup\limits_{i=1}^{d} \CC(v_i,\ep_0)$. Fix such an $\ep_0 \in (0,\beta/8)$.

\item Suppose $a,b,c,d\in \Sig$ are related by
$$[a,c] = b ~~\text{ and }~~[c,a] = d,$$
where the bracket operation is defined in \eqref{eq: bracket}.
Then we say such points form a \textit{rectangle} with vertices $a,b,c,$ and $d$, and denote it by $[a,b,c,d]_r$.

\begin{figure}[H]
\caption{}
\centering
\includegraphics[width=0.25\columnwidth]{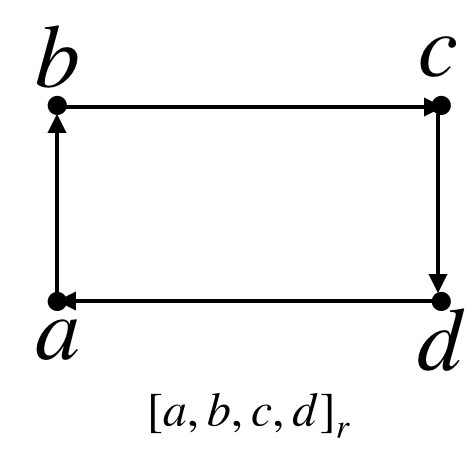}
\label{fig: figure_6}
\end{figure}

Note that a rectangle is defined by prescribing two opposite vertices. All rectangles appearing in the proof will have one of its vertices at $p$. 
\item 
For $q \in \Sig$ in the local neighborhood of $p$, but not on $\Wloc^s(p) \cup\Wloc^u(p)$, consider the rectangle $[p,x,q,y]_r$ having $p$ and $q$ as opposite vertices.
We define ``the holonomy of the rectangle $[p,x,q,y]_r$'' by
\begin{equation}\label{eq: hol rectangle 0}
R_q : = H^u_{y,p}\circ H^s_{q,y}\circ H^u_{x,q}\circ H^s_{p,x}.
\end{equation}

Since canonical holonomies are uniformly continuous, the holonomy composition $R_q$ uniformly approaches the identity as the rectangle degenerates (i.e., as a pair of opposite sides degenerates to a point) to a line or a point. 

\item 
Recall $\theta \in (0,1)$ is the hyperbolicity constant defining the metric on the base $\Sig$. We fix $m\in \N$ such that the following conditions hold: suppose $[p,x,q,y]_r$ is a rectangle.
\begin{enumerate}[label=(\roman*)]
\item If $[p,x,q,y]_r$ has an edge whose length is at most $\theta^m$, then
$$\mangle(R_q(v),v) < \frac{\ep_0}{2} \text{ for any }v \in \mathbb{P}(\R^d).$$
\item If all edges of $[p,x,q,y]_r$ are no longer than $\theta^m$, then
$$\mangle\big(H^{u}_{x,q}\circ H^{s}_{p,x}(v),v \big)< \ep_0/2~~ \text{ and }~~\mangle\big(H^{s}_{y,q}\circ H^{u}_{p,y}(v),v \big) < \ep_0/2,$$
for any $v \in \mathbb{P}(\R^d)$.
\end{enumerate}
The existence of such $m\in \N$ is guaranteed from the uniform continuity of the canonical holonomies $H^{s/u}$. 

\item 
Recall that we assumed $z\in \Wloc^u(p)$. Fix $\ell \in \N$ such that $f^\ell z \in \Wloc^s(p)$. Increase $\ell$ if necessary such that for any $v \in\bigcup\limits_{i=1}^{d} \CC(v_i,\ep_0)$, we have
$$P^\ell\psi_p^z(v) \in \CC(v_1,\ep_0/2).$$
The existence of such $\ell$ is guaranteed from \eqref{eq: uniform twist 0} and pinching condition (A0) on $P$.

Notice that further increasing $\ell$ does not disturb the defining properties of $\ell$. So, we further increase $\ell$ if necessary so that $d(f^\ell z,p) \leq \theta^{m}$.
 
\item Set $\Upsilon :=\max\left(\max\limits_{x \in \Sig}\|\A(x)\|,1\right)$ and $\varrho:=\min\left(\min\limits_{x\in \Sig}m(\A(x)),1\right)$. 
\item Using the uniform continuity of the canonical holonomies, we fix $C_0>1$ so that $\|H^{s/u}_{x,y}\| \leq C_0$ for any $x,y\in \Sig$ with $d(x,y) \leq \theta$. Increase $C_0$ if necessary so that it also serves as a constant for the bounded distortion property \eqref{eq: bdd distortion general} of the singular value potential $\Phi_\A^1$:
for any $n \in \N$ and $\I \in \L(n)$, we have
$$C_0^{-1} \leq \frac{\|\A^n(x)\|}{\|\A^n(y)\|} \leq C_0,$$
for any $x,y\in \I$.

\item Let $N\in \N$ be given by applying Lemma \ref{lem: turn} to $P$ and $\ep_0/2$. Then for any $v \in \mathbb{P}(\R^d)$, there exists $n=n(v) \leq N$ such that $P^nv \in \bigcup\limits_{i=1}^d \CC(v_i,\ep_0/2)$.
\item Let $k_1 := N+\ell$.
\end{itemize}

By adjusting the constants $\beta,\ep_0,m,\ell,\Upsilon,\varrho,C_0,N$ and $k_1$ in the order they are defined, we may assume that they work for the adjoint cocycle as well. For the adjoint cocycle, we interchange the role of $z$ and $f^\ell z$, and denote the corresponding points (on the orbit of $z$) by $\hat{z} \in \Wloc^u(p)$ and $f^{-\ell} \hat{z}\in \Wloc^s(p)$.

The constants $\beta,\ep_0,m,r,\ell,\Upsilon,\varrho,C_0,N$ and $k_1$ also work uniformly in a small neighborhood of $\A$. We will comment regarding the uniform choice of the constants $c,k$ at the end of the proof. \\

\noindent\textbf{Step 2.} 
Since the adjacency matrix $T$ is primitive, there exists $\bar{\tau} \in \N$ such that $T^{\bar{\tau}}>0$. Such $\bar{\tau}$ is the mixing rate of the system $(\Sig,f)$. Then for any given $\I\in \L$, there exists $\bar{\omega}_0 \in [\I] \cap \W^s(p)$ such that $f^{|\I|+\bar{\tau}}\bar{\omega}_0 \in \Wloc^s(p)$.

We set 
$$\omega_0:= f^\tau \bar{\omega}_0 \text{ where } \tau  = \tau(\I) := |\I|+\bar{\tau}+m.$$
Since $f^{|\I|+\bar{\tau}}\bar{\omega}_0$ is already on the local stable set $\Wloc^s(p)$ of $p$, we have $d(\omega_0,p) \leq \theta^m$.

Let $$u_{\bar{\omega}_0}:=H^s_{\omega_0,p}u(\A^\tau(\bar{\omega}_0)).$$ Lemma \ref{lem: turn} implies that there exists $n =n(u_{\bar{\omega}_0}) \leq N$ such that $P^n u_{\bar{\omega}_0} \in \CC(v_i,\ep_0/2)$, for some $1 \leq i \leq d$. From $u_{\bar{\omega}_0}$ and $n$, we construct a new point 
$$\bar{\omega}_{\I} = f^{-\tau-n}[z,f^n \omega_0];$$
note that $\bar{\omega}_\I \in \Wloc^u(\bar{\omega}_0) \cap [\I]$.
We set 
$$\omega_\I := f^\tau \bar{\omega}_\I, \text{ and }\widetilde{\omega}_\I:= f^{n+\ell} \omega_\I.$$ 
The forward orbit segment starting at $\bar{\omega}_\I \in [\I]$ first comes close to $p$, arriving at $\omega_\I$, then dwells near $p$ for $n$ iterates, and then shadows the orbit segment from $z$ to $f^\ell z$ to finally land on $\Wloc^{s}(p)$ at the point $\widetilde{\omega}_\I$. 
Since $n$ is bounded above by $N$, the length of the orbit segment $(\omega_\I,n+\ell)$ is bounded above by $k_1$. 

The holonomy of the rectangle with opposite vertices at $p$ and $f^n\omega_\I = f^{-\ell}\widetilde{\omega}_\I$ is given by $$R_{f^{-\ell}\widetilde{\omega}_\I} = H^u_{z,p}H^s_{f^{-\ell} \widetilde{\omega}_\I,z}H^u_{f^n\omega_0,f^{-\ell} \widetilde{\omega}_\I}H^s_{p,f^n\omega_0}.$$
Combining this with the relation $H^s_{\widetilde{\omega}_\I,f^\ell z}\A^\ell(f^{-\ell}\widetilde{\omega}_\I) = \A^{\ell}(z)H^s_{f^{-\ell} \widetilde{\omega}_\I,z}$ and \eqref{eq: psi_p^z}, we obtain 
\begin{align}\label{eq: using psi_p^z}
\begin{split}
H^s_{\widetilde{\omega}_{\I},p}\A^{n+\ell}(\omega_{\I})H^u_{\omega_0,\omega_{\I}}
&=H^s_{\widetilde{\omega}_{\I},p}\A^\ell(f^{-\ell}\widetilde{\omega}_\I )\A^n(\omega_\I) H^u_{\omega_0,\omega_\I} \\
&=H^s_{\widetilde{\omega}_{\I},p}H^s_{f^\ell z,\widetilde{\omega}_\I}\A^{\ell}(z) H^s_{f^{-\ell} \widetilde{\omega}_\I,z}H^u_{f^n\omega_0,f^{-\ell} \widetilde{\omega}_\I} \A^n(\omega_0)\\
&= H^s_{f^\ell z,p}\A^{\ell}(z) H^u_{p,z}R_{f^{-\ell}\widetilde{\omega}_{\I}}H^s_{f^{n}\omega_{0},p} \A^{n}(\omega_0)\\
&=P^\ell\psi^{z}_{p} R_{f^{-\ell}\widetilde{\omega}_{\I}} H^s_{f^{n}\omega_{0},p} \A^{n}(\omega_0)\\
&=P^\ell\psi^{z}_{p} R_{f^{-\ell}\widetilde{\omega}_{\I}} P^{n} H^s_{\omega_0, p}.
\end{split}
\end{align}

Then $u_{\bar{\omega}_\I}:=H^s_{\widetilde{\omega}_\I,p}\A^{n+\ell}(\omega_\I)H^u_{\omega_0,\omega_\I}u(\A^\tau(\bar{\omega}_0))$ is related to $u_{\bar{\omega}_0}$ as follows: 
\begin{align}
\begin{split}\label{eq: ut relation 0}
u_{\bar{\omega}_\I}&=H^s_{\widetilde{\omega}_\I,p}\A^{n+\ell}(\omega_\I)H^u_{\omega_0,\omega_\I}u(\A^\tau(\bar{\omega}_0))\\
&=P^\ell\psi^{z}_{p} R_{f^{-\ell}\widetilde{\omega}_{\I}} P^{n}H^s_{\omega_0,p} u(\A^\tau(\bar{\omega}_0))\\
&=P^\ell\psi^{z}_{p} R_{f^{-\ell}\widetilde{\omega}_{\I}} P^{n}u_{\bar{\omega}_0}.
\end{split}
\end{align}
From \eqref{eq: ut relation 0}, it follows that 
\begin{equation}\label{eq: close to v1}
u_{\bar{\omega}_\I} \in \CC(v_1,\ep_0/2).
\end{equation}
Indeed, the choice of $n = n(u_{\bar{\omega}_0})$ gives $P^nu_{\bar{\omega}_0} \in \CC(v_i,\ep_0/2)$ for some $1 \leq i \leq d$. Since the edge between $p$ and $f^n\omega_0$ is no longer than $\theta^m$, $R_{f^{-\ell} \widetilde{\omega}_\I}$ doesn't move any line off itself more than $\ep_0/2$ in angle. 
Lemma \ref{lem: lin algebra immediate} then gives \eqref{eq: close to v1}.
Note from the choice of $\ell$, we have $d(\widetilde{\omega}_\I,p) \leq \theta^m$. This fact will be used in Step 4.

Let us briefly summarize what we have done so far. From a given word $\I\in \L$, we construct an orbit segment  $(\bar{\omega}_0,\tau)$ starting at $\bar{\omega}_0 \in [\I]$ and ending at $\omega_0 \in \Wloc^s(p)$ using the mixing property of the base system $(\Sig,f)$. We do not however have any control of the singular direction $u_{\bar{\omega}_0}$; it could be anywhere in $\mathbb{P}(\R^d)$. So we construct a new orbit segment $(\bar{\omega}_\I,\tau+n+\ell)$ which first shadows the orbit of $\bar{\omega}_0$ for time $\tau+n$ and then shadows the orbit of $z$ for time $\ell$. By choosing $n$ in such a way that $P^n u_{\bar{\omega}_0}$ is close to one of the eigendirections of $P$, we ensure that $u_{\bar{\omega}_\I}$ is close enough to the top eigendirection $v_1$ of $P$.
\\\\
\textbf{Step 3.}
We apply the argument in Step 2 to the adjoint cocycle $\A_*$ with $\hat{z}$ and $f^{-\ell}\hat{z}$ playing the role of $z$ and $f^\ell z$.

Similar to $\bar{\omega}_0$, we obtain $\hat{\iota}_0 \in f^{|\J|}\J$ from the mixing property of $(\Sig,f)$ such that $$\iota_0:=f^{-\tau(\J)}\hat{\iota}_0 \in \Wloc^u(p) \text{ where }\tau(\J) = |\J|+\bar{\tau}+m.$$
Applying Lemma \ref{lem: turn} to $P_*$ and the direction $H^{s,*}_{\iota_0,p}u(\A_*^{\tau(\J)}(\hat{\iota}_0))$ gives $\hat{n} \leq N$ such that $P_*^{\hat{n}}H^{s,*}_{\iota_0,p}u(\A_*^{\tau(\J)}(\hat{\iota}_0))$ belongs to the cone $\CC(w_i,\ep_0/2)$ for some $1\leq i\leq d$. Define $$\hat{\iota}_\J:=f^{\tau(\J)+\hat{n}}[f^{-\hat{n}}\iota_0,\hat{z}],$$ and set   
 $$\iota_\J = f^{-\tau(\J)}\hat{\iota}_{\J} \text{ and } \widetilde{\iota}_\J:=f^{-\hat{n}-\ell} \iota_\J.$$
Then the analogue of \eqref{eq: close to v1} holds:
\begin{equation}\label{eq: close to w1}
H^{s,*}_{\widetilde{\iota}_\J,p}\A_*^{\hat{n}+\ell}(\iota_\J)H^{u,*}_{\iota_0,\iota_{\J}}u(\A_*^{\tau(\J)}(\hat{\iota}_0)) \in \CC(w_1,{\ep}_0/2).
\end{equation}
The length of the $f^{-1}$-orbit from $\iota_\J$ to $\widetilde{\iota}_\J$ is bounded above by $k_1$.  

Having two points $\widetilde{\omega}_{\I} \in \Wloc^s(p)$ and $\widetilde{\iota}_\J\in \Wloc^u(p)$ with the desired control on the singular directions \eqref{eq: close to v1} and \eqref{eq: close to w1}, we connect their orbits near $p$ by 
$$\chi:=[\widetilde{\iota}_\J,\widetilde{\omega}_{\I}],$$
and set $\bar{\chi}:=f^{-\tau(\I)-n-\ell}\chi \in [\I]$ and $\hat{\chi}:=f^{\tau(\J)+\hat{n}+\ell}\chi \in f^{|\J|}[\J]$. 

From the construction, every edge of the rectangle $[p,\widetilde{\omega}_\I,\chi,\widetilde{\iota}_\J]_r$ is no longer than $\theta^m$.
From the choice of $m$, $H^u_{\widetilde{\omega}_{\I},\chi}\circ H^s_{p,\widetilde{\omega}_{\I}}$ is sufficiently close to the identity in that it does not move any line off itself more than $\ep_0/2$ in angle. Then from \eqref{eq: close to v1}, 
\begin{align*}
u_{\bar{\chi}}&:=\A^{n+\ell}(f^{\tau(\I)}\bar{\chi})H^u_{\omega_0,f^{\tau(\I)}\bar{\chi}}u(\A^\tau(\bar{\omega}_0))\\
&= H^u_{\widetilde{\omega}_\I,\chi} H^s_{p,\widetilde{\omega}_{\I}}u_{\bar{\omega}_\I}
\end{align*}
belongs to $\CC(v_1,\ep_0)$.

Similarly, $H^{u,*}_{\widetilde{\iota}_\J,\chi}\circ H^{s,*}_{p,\widetilde{\iota}_\J}$ doesn't move any line off itself more than $\ep_0/2$ in angle.
Notice that 
$$u(\A_*^{\tau(\J)}(\hat{\iota}_0)) = v(\A^{\tau(\J)}(\iota_0)),$$
since $\A_*(fx)$ is the transpose of the $\A(x)$. 
Then it similarly follows from \eqref{eq: close to w1} that
$$v_{\hat{\chi}}:=\A_*^{\hat{n}+{\ell}}(f^{\hat{n}+{\ell}}\chi)H^{u,*}_{\iota_0,f^{\hat{n}+{\ell}}\chi}v(\A^{\tau(\J)}(\iota_0))$$
belongs to $ \CC(w_1,\ep_0)$.

Then $u_{\bar{\chi}} \in \CC(v_1,\ep_0)$ and $v_{\hat{\chi}}\in \CC(w_1,\ep_0)$ together give the uniform angle gap (using the choice of 
$\ep_0 \in (0,\beta/8)$):
\begin{equation}\label{eq: 3beta/4}
\mangle\Big(v_{\hat{\chi}},u_{\bar{\chi}}^\perp\Big)>\frac{3\beta}{4}.
\end{equation}
\\
\textbf{Step 4.}
We use the orbit of $\chi$ to construct a connecting word $\K$.
Let $k:=2m+2\bar{\tau}+2k_1$, and note that $k$ is independent of $\I$ and $\J$. Then we define the connecting word $$\K:= [f^{|\I|}\bar{\chi}]_{\bar{k}}^w,$$ where $\bar{k}=2m+2\bar{\tau}+n+\hat{n}+2\ell$. The length of $\K$ is at most $k$. We apply Lemma \ref{lem: linear algebra 2} with 
$$A=\A^{\tau(\J)}(\iota_0),~B=H^s_{f^{\hat{n}+\ell}\chi,\iota_0} \A^{\hat{n}+\ell}(\chi),~C=\A^{n+\ell}(f^{\tau(\I)}\bar{\chi})H^u_{\omega_0,f^{\tau(\I)}\bar{\chi}},\text{ and }D=\A^{\tau(\I)}(\bar{\omega}_0):$$ recalling that $H^{s/u,*}_{x,y} = (H^{u/s}_{y,x})^*$, from \eqref{eq: 3beta/4}, such choice of $A,B,C$ and $D$ satisfies the assumption of Lemma \ref{lem: linear algebra 2} with $\theta = 3\beta/4$.
Since $C_0$ is the constant from the bounded distortion as well as the upper bound on $\|H^{s/u}_{x,y}\|$ whenever $d(x,y) \leq \theta$, we have 
\begin{align*}
\|\A(\I\K\J)\| &\geq \|\A^{\bar{k}+|\I|+|\J|}(\bar{\chi})\|,\\
&=\|\A^{\tau(\J)}(f^{\hat{n}+\ell}\chi)\A^{\hat{n}+\ell}(\chi)\A^{n+\ell}(f^{\tau(\I)}\bar{\chi})\A^{\tau(\I)}(\bar{\chi})\|,\\
&\geq C_0^{-2}\|H^s_{\hat{\chi},\hat{\iota}_0}\A^{\tau(\J)}(f^{\hat{n}+\ell}\chi)\A^{\hat{n}+\ell}(\chi)\A^{n+\ell}(f^{\tau(\I)}\bar{\chi})\A^{\tau(\I)}(\bar{\chi})H^u_{\bar{\omega}_0,\bar{\chi}}\|,\\
&=C_0^{-2}\|\A^{\tau(\J)}(\iota_0)
H^s_{f^{\hat{n}+\ell}\chi,\iota_0} \A^{\hat{n}+\ell}(\chi)
\A^{n+\ell}(f^{\tau(\I)}\bar{\chi})H^u_{\omega_0,f^{\tau(\I)}\bar{\chi}}
\A^{\tau(\I)}(\bar{\omega}_0)\|,\\
&= C_0^{-2}\|ABCD\|,\\
&\geq C_0^{-2} \sin(3\beta/4)\|A\|\|D\| \frac{m(B)^2m(C)^2}{\|B\|\|C\|},\\
&\geq C_0^{-8}\sin(3\beta/4) \frac{\varrho^{4k_1}}{\Upsilon^{2k_1}}\|\A^{\tau(\J)}(\iota_0)\|\cdot  \|\A^{\tau(\I)}(\bar{\omega}_0)\| ,\\
&\geq C_0^{-8} \sin(3\beta/4) \frac{\varrho^{4k_1+2(\bar{\tau}+m)}}{\Upsilon^{2k_1}}\|\A^{|\J|}(f^{-|\J|}\hat{\iota}_0)\|\cdot  \|\A^{|\I|}(\bar{\omega}_0)\|,\\
&\geq c\|\A(\I)\|\|\A(\J)\|,
\end{align*}
where $c:=\displaystyle C_0^{-10} \sin(3\beta/4) \frac{\varrho^{4k_1+2(\bar{\tau}+m)}}{\Upsilon^{2k_1}}$.

From the comments at the end of Step 2 as well as Remark \ref{rem: N uniform}, the constants $c$ and $k$ work in a small neighborhood of $\A$. This completes the proof.
\end{proof}

\subsection{Proof of Theorem \ref{thm: qm general}}\label{subsection: pf qm general}

The proof of Theorem \ref{thm: qm general} closely follows the proof of Theorem \ref{thm: A}. 
We will use the same notations as in the proof of Theorem \ref{thm: A} whenever applicable.

\begin{proof}[Proof of Theorem \ref{thm: qm general}]~\\\\
\textbf{Step 1.}
Let $\A_t \colon \Sig \to \text{GL}(\V_t)$, $1 \leq t \leq \kappa$ be \hol functions with uniformly continuous holonomies $H^{s/u,(t)}$ such that the pinching (A0) and the twisting (B0) conditions from Definition \ref{defn: 1-typical} hold at the common fixed point $p$ and its homoclinic point $z$. Let $P_t:=\A_t(p)$.

First, we fix the constants $\beta,\ep_0, m,\ell, \Upsilon, \varrho, C_0$ and $N$ from the proof of Theorem \ref{thm: A} such that their properties work uniformly for all $\A_t$, $t \in \{1,2,\ldots,\kappa\}$. For instance, denoting $$\beta_t:=\mangle\big(v_1^{(t)},(w_1^{(t)})^\perp\big) = \mangle\big(v_1^{(t)},\WW_1^{(t)}\big)>0,$$ let $\beta$ be the minimum of all $\beta_t$: 
$$\beta:=\min\limits_{1 \leq t \leq \kappa} \beta_t,$$
which is necessarily bounded away from 0. We define $N \in \N$ by taking the maximum among the $N$'s obtained by applying Lemma \ref{lem: turn} to $P_t$ and $\ep_0/2$ for each $1 \leq t \leq \kappa$. Similarly, other constants are chosen to work uniformly for all $\A_t$, $1 \leq t \leq \kappa$. 

For $k_1$, we re-define it as
$$k_1 := \kappa(N+\ell).$$
By further relaxing these constants, they work uniformly in a small neighborhood of each $\A_t$. 

In order to avoid overloading the super/subscripts, for the rest of the proof, we will often write $\A$ to denote $\A_t$ for some $1 \leq t \leq \kappa$ when the context is clear. Similarly, we will suppress the index $t$ from related expressions (especially from the holonomies $H^{s/u,(t)}_{x,y}$) when there is no confusion.
\\\\
\textbf{Step 2.}
Following Step 2 of the proof of Theorem \ref{thm: A}, we obtain $\bar{\tau}\in \N$ from the mixing property of $(\Sig,f)$ such that given any $\I \in \L$, there exists $\bar{\omega}_0 \in [\I] \cap \W^s(p)$ such that $f^{|\I|+\bar{\tau}}(\bar{\omega_0}) \in \Wloc^s(p)$. 
We set $$\widetilde{\omega}_0 :=\omega_0 = f^\tau \bar{\omega}_0, \text{ where } \tau=\tau(\I) := |\I|+\bar{\tau}+m.$$ 
Since $f^{|\I|+\bar{\tau}}\bar{\omega}_0$ is already on the local stable set $\Wloc^s(p)$ of $p$, we have $d(\widetilde{\omega}_0,p) \leq \theta^m$.
For each $1 \leq t \leq \kappa$, let 
$$u^{(t)}(\omega_0):=H^s_{\omega_0,p}   u(\A_t^{\tau}(\bar{\omega}_0)) \in \mathbb{P}(\V_t).$$

With $(\bar{\omega}_0,\tau)$ as the base case, we will inductively construct orbit segments $\{(\bar{\omega}_j,\tau+n_j)\}_{1 \leq j \leq \kappa}$ with $\bar{\omega}_j \in [\I]$ such that the $j$-th orbit segment $(\bar{\omega}_j,\tau+n_j)$ satisfies the following properties: setting 
\begin{equation}\label{eq: omega_j0}
\omega_j :=f^{\tau}\bar{\omega}_j \text{ and } \widetilde{\omega}_j:= f^{n_j} \omega_j,
\end{equation}
we have $ \omega_j \in \Wloc^u(\omega_0)$ and $\widetilde{\omega}_j \in \Wloc^s(p)$ with $d(\widetilde{\omega},p) \leq \theta^m$. Moreover, setting
\begin{equation}\label{eq: ut omega_j}
u^{(t)}(\omega_j):=H^s_{\widetilde{\omega}_j,p}\A^{n_j}(\omega_j)H^u_{\omega_0,\omega_j}u(\A_t^{\tau}(\bar{\omega}_0)), 
\end{equation}
we have
\begin{equation}\label{eq: ut in v_1}
u^{(t)}(\omega_j) \in \CC(v_1^{(t)},\ep_0/2) \text{ for }1 \leq t \leq j.
\end{equation}

First, we construct $\bar{\omega}_1$ similarly how we constructed $\bar{\omega}_\I$ in Step 2 of the proof of Theorem \ref{thm: A}: by applying Lemma \ref{lem: turn} to $u^{(1)}(\omega_0)$, we obtain $\widetilde{n}_0 \leq N$ such that $P_1^{\widetilde{n}_0} u^{(1)}(\omega_0)$ belongs to $\CC(v_1^{(1)},\ep_0/2)$. We then set
$$\bar{\omega}_{1} = f^{-\tau-\widetilde{n}_0}[z,f^{\widetilde{n}_0} \omega_0], \text{ and } n_1  = \widetilde{n}_0+\ell,$$
and define $\omega_1$, $\widetilde{\omega}_1$ according to \eqref{eq: omega_j0}. 
Following the same argument that established $u_{\bar{\omega}_\I}\in \CC(v_1,\ep_0/2)$ from Step 2 in the proof of Theorem \ref{thm: A}, we see that $u^{(1)}(\omega_1)$ defined in \eqref{eq: ut omega_j} belongs to $\CC(v_1^{(1)},\ep_0/2)$. This establishes \eqref{eq: ut in v_1} for $j=1$.

For the inductive step, suppose we have $(\bar{\omega}_j,\tau+n_j)$ such that \eqref{eq: ut in v_1} holds. Applying Lemma \ref{lem: turn} to $u^{(j+1)}(\omega_j)$ gives $\widetilde{n}_j \leq N$ such that 
$P_{j+1}^{\widetilde{n}_j}u^{(j+1)}(\omega_j)$ belongs to $\CC(v_i^{(j+1)},\ep_0/2)$ for some $1 \leq i \leq d_{j+1}$.
Setting $$\bar{\omega}_{j+1}:=f^{-\tau-n_j-\widetilde{n}_j}[z,f^{\widetilde{n}_j}\widetilde{\omega}_{j}] \text{ and }n_{j+1} := n_j+\widetilde{n}_j+\ell,$$
we obtain $\omega_{j+1}$ and $\widetilde{\omega}_{j+1}$ according to \eqref{eq: omega_j0}. From the choice of $\ell$, we have $d(\widetilde{\omega}_{j+1},p) \leq \theta^m$.
We need to show that for such $\omega_{j+1}$, $u^{(t)}(\omega_{j+1})$ belongs to $\CC(v_1^{(t)},\ep_0/2)$ for each $1 \leq t \leq j+1$. 

The analogous calculations to \eqref{eq: using psi_p^z} and \eqref{eq: ut relation 0} show that $u^{(t)}(\omega_{j+1})$ and $u^{(t)}(\omega_{j})$ are related by
$$u^{(t)}(\omega_{j+1}) = P_t^\ell\psi_{p,z}^{(t)}R^{(t)}_{f^{-\ell}\widetilde{\omega}_{j+1}}P_t^{\widetilde{n}_j}u^{(t)}(\omega_j)$$
for each $1 \leq t \leq \kappa$.

From the inductive hypothesis as well as the choice of $\widetilde{n}_j$, it follows that $P_t^{\widetilde{n}_j}u^{(t)}(\omega_j)$ belongs to $\CC(v_1^{(t)},\ep_0/2)$ for each $1\leq t \leq j+1$. Indeed for $1\leq t \leq j$, we already have $u^{(t)}(\omega_{j+1}) \in \CC(v_1^{(t)},\ep_0/2)$ from the hypothesis, and since $v_1^{(t)}$ is the eigendirection of $P_t$ corresponding to the largest eigenvalue in modulus, $P_t^{\widetilde{n}_j}$ maps it even closer toward $v_1^{(t)}$. For $t = j+1$, the number $\widetilde{n}_j \leq N$ is chosen so that $P_{j+1}^{\widetilde{n}_{j}}u^{(j+1)}(\omega_j)$ belongs to $\CC(v_i^{(j+1)},\ep_0/2)$ for some $1 \leq i \leq d_{j+1}$.

Since $R^{(t)}_{f^{-\ell} \widetilde{\omega}_{j+1}}$ does not move any line off itself more than $\ep_0/2$ in angle,
it follows from Lemma \ref{lem: lin algebra immediate} that $u^{(t)}(\omega_{j+1})$ belongs to $\CC(v_1^{(t)},\ep_0/2)$ for each $1 \leq t \leq j+1$, completing the inductive step.

\begin{figure}[h]
\caption{}
\centering
\includegraphics[width=0.7\columnwidth]{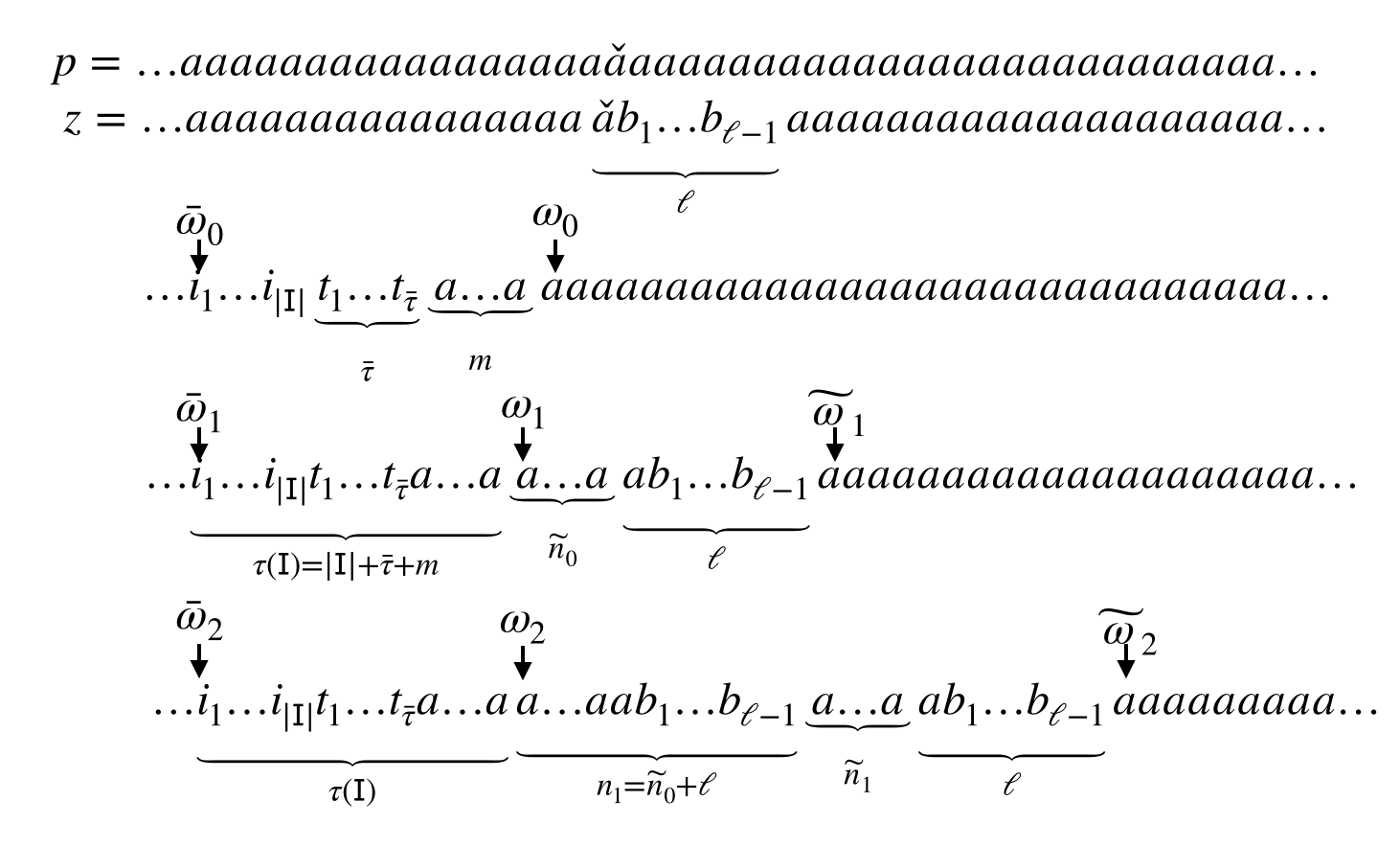}
\label{fig: figure_4}
\end{figure}

See Figure \ref{fig: figure_1} below for first two iterations of the inductive step. Also see Figure \ref{fig: figure_4} for the symbolic description of the inductive step; the checks and arrows indicate the 0-th entry of the indicated points.

\begin{figure}[h]
\caption{}
\centering
\includegraphics[width=\textwidth]{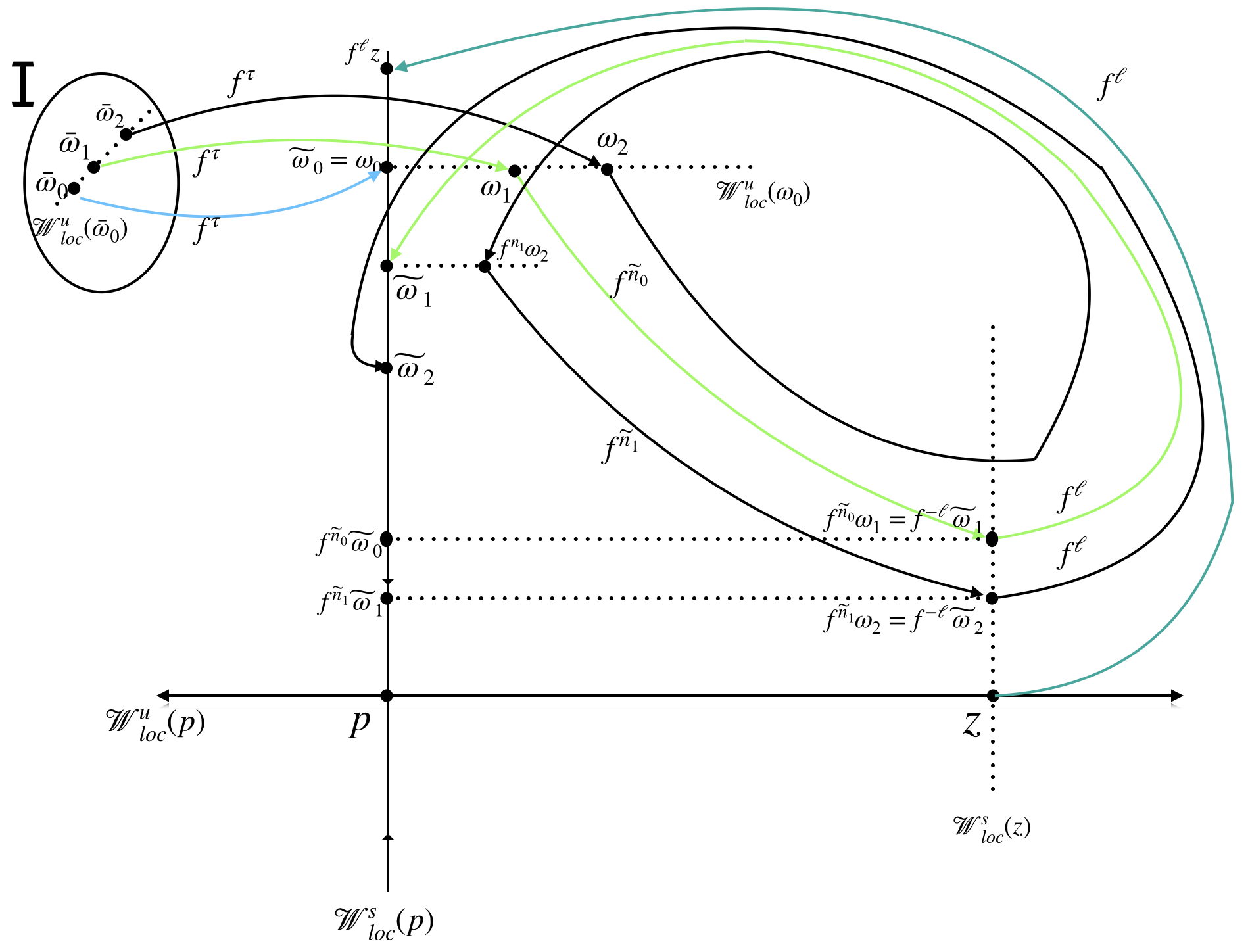}
\label{fig: figure_1}
\end{figure}

The induction ends at $\omega_\kappa$ where $\kappa$ is the number of cocycles $\A_t$. Setting $\bar{\omega}_\I:=\bar{\omega}_\kappa$ and $n_\I = n_\kappa$, we summarize the properties of the orbit segment $(\bar{\omega}_\I,\tau+n_\I)$ as follows: 
\begin{equation}\label{eq: omega_I0}
\bar{\omega}_{\I} \in [\I],~
\omega_\I:=\omega_\kappa \in \Wloc^u(\omega_0),~\widetilde{\omega}_{\I}:=\widetilde{\omega}_\kappa \in\Wloc^s(p)
\end{equation}
with $d(\widetilde{\omega}_\I,p )\leq \theta^m$, and 
\begin{equation}\label{eq: ut omega_I0}
u^{(t)}(\omega_{\I}):=u^{(t)}(\omega_\kappa)\in \CC(v_1^{(t)},\ep_0/2) \text{ for every }1 \leq t \leq \kappa.
\end{equation}
Recalling that $$k_1:=\kappa(N+\ell),$$
we have $n_\I \leq k_1$. This follows because $n_{j+1}-n_j = \widetilde{n}_j+\ell \leq N+\ell$ for each $j$ and the inductive process terminates in $\kappa$ iterations.
\\\\
\textbf{Step 3}.
We apply the same argument above to the adjoint cocycle $\A_*$ similar to Step 3 in the proof of Theorem \ref{thm: A}. 

For any $\J \in \L$, there exist $\hat{\iota}_0 \in f^{|\J|}[\J]$ such that $$\iota_0 :=f^{-\tau(\J)}\hat{\iota}_0\in \Wloc^u(p) \text{ with }\tau(\J) = |\J|+\bar{\tau}+m.$$ Similar to how we constructed $\bar{\omega}_\I$ inductively from $\bar{\omega}_0$, we  construct $\hat{\iota}_\J \in f^{|\J|}[\J]$ from $\hat{\iota}_0$ such that properties analogous to \eqref{eq: omega_I0} and \eqref{eq: ut omega_I0} hold: denoting $\iota_\J:=f^{-\tau(\J)}\hat{\iota}_\J$ and $\widetilde{\iota}_\J:=f^{-\hat{n}_\J}\iota_\J \in \Wloc^u(p)$, 
we have 
$$\iota_\J \in \Wloc^s(\iota_0) \text{ and }\widetilde{\iota}_\J \in \Wloc^u(p)  $$
with $d(\widetilde{\iota}_\J,p) \leq \theta^m$. Also, $\hat{n}_\J$ is bounded above by $k_1$. 
Moreover,
\begin{equation}\label{eq: vt in w_1}
H^{s,*}_{\widetilde{\iota}_\J,p}\A_*^{\hat{n}_\J}(\iota_\J)H^{u,*}_{\iota_0,\iota_{\J}}u^{(t)}(\A_*^{\tau(\J)}(\hat{\iota}_0)) \in \CC(w_1^{(t)},\ep_0/2) \text{ for every } 1 \leq t \leq \kappa.
\end{equation}

Having constructed two points $\widetilde{\omega}_{\I}\in \Wloc^s(p)$ and $\widetilde{\iota}_\J\in \Wloc^u(p)$ with the desired control on the singular directions \eqref{eq: ut in v_1} and \eqref{eq: vt in w_1}, we connect them near $p$ by 
$$\chi:=[\widetilde{\iota}_\J,\widetilde{\omega}_{\I}],$$
and set $\bar{\chi}:=f^{-\tau(\I)-n_\I}\chi \in [\I]$ and $\hat{\chi}:=f^{\tau(\J)+\hat{n}_\J}\chi \in f^{|\J|}[\J]$. 

From the choice of $m$, following the same argument as in Step 3 in the proof of Theorem \ref{thm: A}, we obtain the uniform angle gap:
$$\mangle\Big(v^{(t)}(\hat{\chi}),u^{(t)}(\bar{\chi})^\perp\Big)>\frac{3\beta}{4} \text{ for every }1\leq t \leq \kappa,$$
where $$ u^{(t)}(\bar{\chi}) = \A^{n_\I}(f^{\tau(\I)}\bar{\chi})H^u_{\omega_0,f^{\tau(\I)}\bar{\chi}}u(\A^\tau(\bar{\omega}_0))$$ and $$ v^{(t)}(\hat{\chi})=\A_*^{\hat{n}_\J}(f^{\hat{n}_\J}\chi)H^{u,*}_{\iota_0,f^{\hat{n}_\J}\chi}v(\A^{\tau(\J)}(\iota_0)).$$
\\\\
\textbf{Step 4.} This step also follows Step 4 in the proof of Theorem \ref{thm: A}. Setting 
$$\K:=[f^{|\I|}\bar{\chi}]^w_{\bar{k}}$$
where $\bar{k} = 2m+2\bar{\tau}+n_\I+\hat{n}_\J$, the length of $\K$ is bounded above by $k := 2m+2\bar{\tau}+2k_1$, a number defined independent of $\I$ and $\J$.
We then apply Lemma \ref{lem: linear algebra 2} to each $\A_t$, $t \in \{1,2,\ldots,\kappa\}$. This gives
$$\|\A_t(\I\K\J)\| \geq c \|\A_t(\I)\|\|\A_t(\J)\|,$$
where $c:=\displaystyle C_0^{-10} \sin(3\beta/4) \frac{\varrho^{4k_1+2(\bar{\tau}+m)}}{\Upsilon^{2k_1}}$. Here we have used the fact that all constants from Step 1 have been chosen to work uniformly over all $\A_t$. Lastly, $c$ and $k$ can be slightly relaxed to work uniformly in a small neighborhood of each $\A_t$.
\end{proof}

\begin{rem}\label{rem: nonuniform K0}
Unlike constants $c$ and $k$, it is clear from the proof of Theorem \ref{thm: qm general} that the connecting word $\K = \K(\I,\J) \in \L$ cannot be chosen uniformly in a small neighborhood of $\A$. This is because although $\B$ may be arbitrarily close to $\A$, the singular direction $u(\B_t^\tau(\bar{\omega}_0))$ from Step 2 could be drastically different from $u(\A_t^\tau(\bar{\omega}_0))$ if the length of $\I$ (and, hence, $\tau = m+\bar{\tau}+|\I|$) is arbitrarily large. Then the number of iterates $n$ of $P$ needed to turn $H^s_{\omega_0,p}u(\A_t^\tau(\bar{\omega}_0))$ close to one of the eigendirections of $P$ would be different from that of $H^s_{\omega_0,p}u(\B_t^\tau(\bar{\omega}_0))$. Hence we cannot expect $\K$ to be chosen uniformly near $\A$. 
\end{rem}

\begin{rem}\label{rem: s_0 for all R+}
From the proof of Theorem \ref{thm: qm general}, it is clear that given any $s_0 \in \R^+_0$ (i.e., not necessarily belonging to the range $[0,d]$) and $\A\in \U$, the singular value functions $\widetilde{\varphi}_\A^s$, $s \in [0,s_0]$ are simultaneously quasi-multiplicative. Moreover, the constants $c,k$ can be chosen uniformly in a small neighborhood of $\A$.

Suppose $s_0>d$. Let $c_1,k>0$ be constants from Theorem \ref{thm: qm general}, and $c_2>0$ be a small constant such that 
$$\min\limits_{x \in \Sig} \det(\B(x) )^{s/d} \geq c_2$$
for all $s \in [d,s_0]$ and $\B$ sufficiently close to $\A$.
Let $C >1$ be the maximum among the bounded distortion constant (see Remark \ref{rem: unif cts hol implies bdd dist}) of $\Phi_\B^s$ for all $s \in (d,s_0)$ and $\B$ sufficiently close to $\A$. From the definition of $\vphi^s(A):=|\det(A)|^{s/d}$ for $s >d$, simultaneous quasi-multiplicativity of $\widetilde{\vphi}_\A^s$, $s \in [0,s_0]$ holds with uniform constants $c:=\min(c_1,C^{-2}c_2^k)$ and $k$.
This remark will be useful in proving Theorem \ref{thm: B} in the following section.
\end{rem}

\subsection{Proof of Theorem \ref{thm: E}}\label{subsection: pf thm E}

From Theorem \ref{thm: qm general}, Theorem \ref{thm: E} easily follows.
\begin{proof}[Proof of Theorem \ref{thm: E}]
Given $\A \in \U$, we set $\kappa = d-1$, $\V_t = \R^{d \choose t}$ and $\A_t  = \A^{\wedge t}$ for each $1\leq t \leq d-1$. Then each $\A_t$ admits holonomies $(H^{s/u})^{\wedge t}$ where $H^{s/u} $ are the canonical holonomies of $\A$ given by the fiber-bunching assumption on $\A$.  Also, $(H^{s/u})^{\wedge t}$ varies \hol continuously because $H^{s/u}$ does from \eqref{eq: hol holder}. Hence, it follows from $\A$ being typical that $\A_t$, $1 \leq t\leq d-1$ satisfy the assumptions of Theorem \ref{thm: qm general}. 

Recalling that $\widetilde{\vphi}^t_\A = \widetilde{\vphi}^1_{\A^{\wedge t}}$ for $1\leq t\leq d-1$, Theorem \ref{thm: qm general} gives simultaneous quasi-multiplicativity of $\widetilde{\vphi}^t_\A$ when $t$ is restricted to $[1,d-1] \cap \N$. Moreover, $\widetilde{\vphi}_\A^0 \equiv 1$ is trivially quasi-multiplicative, and by decreasing $c$ if necessary (depending on $k$ and $\varrho$ from the proof of Theorem \ref{thm: qm general}), $\widetilde{\vphi}_\A^d $ is also simultaneously quasi-multiplicative with the same constants $c$ and $k$.

Simultaneous quasi-multiplicativity easily extends to include all $t \in [0,d]$ as follows: for any $t \in (n,n+1)$ with $n \in \{0,1,\ldots,d-1\}$, we write $t = n\gamma +(n+1)(1-\gamma)$ for some $\gamma \in (0,1)$. We raise the inequality from simultaneous quasi-multiplicativity of $\widetilde{\vphi}_\A^n$ by power $\gamma$:
$$\left(\widetilde{\vphi}^{n}_\A(\I\K\J)\right)^\gamma \geq \left(c \widetilde{\vphi}^{n}_\A(\I)\widetilde{\vphi}^{n}_\A(\J)\right)^\gamma.$$
Similarly, we raise the inequality from simultaneous quasi-multiplicativity of $\widetilde{\vphi}_\A^{n+1}$ by power $1-\gamma$:
$$\left(\widetilde{\vphi}^{n+1}_\A(\I\K\J)\right)^{1-\gamma} \geq \left(c \widetilde{\vphi}^{n+1}_\A(\I)\widetilde{\vphi}^{n+1}_\A(\J)\right)^{1-\gamma}.$$
Noticing that $\widetilde{\vphi}^t_\A$ is uniformly comparable to $(\widetilde{\vphi}^{n}_\A)^\gamma(\widetilde{\vphi}^{n+1}_\A)^{1-\gamma}$, multiplying the two inequalities gives simultaneous quasi-multiplicativity of $\widetilde{\varphi}_\A^t$: there exists $c_0>0$ such that
$$\widetilde{\vphi}^{t}_\A(\I\K\J) \geq c_0 \widetilde{\vphi}^{t}_\A(\I)\widetilde{\vphi}^{t}_\A(\J).$$
\end{proof}

\section{Continuity of the subadditive pressure}

\subsection{Proof of Theorem \ref{thm: B}}
In this subsection, we prove Theorem \ref{thm: B} based on the proof of Fekete's lemma. For any $\A \colon \Sig \to \glr$ and $s \in [0,\infty)$, we obtain a subadditive sequence $\{\log\alpha_n^s(\A)\}_{n \in \N}$ where 
$$\alpha_n^s(\A) :=  \sum\limits_{|\I| = n} \widetilde{\vphi}^s_{\A}(\I) = \sum\limits_{|\I| = n} \max\limits_{x \in [\I]} \vphi^s(\A^n(x)).$$
Since the base system is a subshift of finite type $(\Sig,f)$, we have $$\P(\Phi_\A^s) = \lim\limits_{n \to \infty}\frac{1}{n}\log \alpha_n^s(\A);$$
that is, we can compute the pressure by looking $(n,1)$-separated sets, and drop the limit in $\ep$ from the definition of the pressure \eqref{eq: pressure}. See Section 4 of \cite{keller1998equilibrium}.

We say that a sequence $\{a_n\}_{n \in \N}$ is \textit{almost superadditive with constant $C>0$} if for all $n,m \in \N$, we have
$$a_{n+m} \geq a_n +a_m - C.$$

In the following lemma, we use quasi-multiplicativity from Theorem \ref{thm: E} to show that given any $\A\in \U$ and $s \in [0,\infty)$, the sequence $\{\log \alpha_n^s(\B)\}$ is almost superadditive with the uniform constant $C>0$ for all $\B$ sufficiently close to $\A$.
\begin{lem}\label{lem: superadditive}
Let $\A \in \U$ and $s \in [0,\infty)$ . Then there exists $C = C_s>0$ such that the following holds: there exists a small neighborhood of $\A$ in $\U$ such that for all $\B$ in the neighborhood, the sequence $\{\log \alpha_n^s(\B)\}_{n \in \N}$ is almost superadditive with constant $C$. 
\end{lem}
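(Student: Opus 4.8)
The plan is to prove almost superadditivity of $\{\log\alpha_n^s(\B)\}_{n\in\N}$ by using simultaneous quasi-multiplicativity (Theorem~\ref{thm: E}) together with the uniform bounded distortion (Lemma~\ref{lem: bdd distortion} and Remark~\ref{rem: unif cts hol implies bdd dist}), and handling the case $s>d$ via Remark~\ref{rem: s_0 for all R+}. First I would fix $\A\in\U$ and $s\in[0,\infty)$. By Theorem~\ref{thm: E} (extended to all $s\in[0,s_0]$ for any $s_0$ as in Remark~\ref{rem: s_0 for all R+}), there are constants $c>0$, $k\in\N$ and a neighborhood $\mathcal{N}$ of $\A$ in $\U$ such that for every $\B\in\mathcal{N}$ and all $\I,\J\in\L$ there is a connecting word $\K=\K(\I,\J,\B)\in\L$ with $|\K|\le k$, $\I\K\J\in\L$, and $\widetilde{\vphi}^s_\B(\I\K\J)\ge c\,\widetilde{\vphi}^s_\B(\I)\widetilde{\vphi}^s_\B(\J)$. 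Also shrink $\mathcal{N}$ so that the bounded distortion constant $C_0>1$ for $\Phi_\B^s$ and the bounds $\Upsilon:=\max(\sup_{x,\B}\|\B(x)\|,1)$, $\varrho:=\min(\inf_{x,\B}m(\B(x)),1)$ work uniformly over $\mathcal{N}$.

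The key step is the counting estimate. Fix $n,m\in\N$. For each pair $(\I,\J)\in\L(n)\times\L(m)$, the word $\I\K(\I,\J,\B)\J$ lies in $\L(n+m+|\K|)$. Splitting off the middle block, $\widetilde{\vphi}^s_\B(\I\K\J)$ controls $\widetilde{\vphi}^s_\B(\I'\J)$ where $\I'=\I\K$ has length between $n$ and $n+k$; more directly, using submultiplicativity of $\widetilde{\vphi}^s_\B$ and the crude bound $\widetilde{\vphi}^s_\B(\K)\le\Upsilon^{ks}$ for $|\K|\le k$, one gets $\widetilde{\vphi}^s_\B(\I\K\J)\le \Upsilon^{ks}\,\widetilde{\vphi}^s_\B(\I\J')$-type inequalities; but the clean route is: the map $(\I,\J)\mapsto \I\K(\I,\J)\J$ from $\L(n)\times\L(m)$ into $\bigcup_{j=0}^{k}\L(n+m+j)$ is at most $(q^k)$-to-one (since $\I,\J$ are recovered from the image by reading the first $n$ and last $m$ symbols, and $\K$ ranges over at most $q^k$ possibilities). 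Hence
\begin{align*}
\sum_{j=0}^{k}\alpha_{n+m+j}^s(\B)
&\ge q^{-k}\sum_{\I\in\L(n)}\sum_{\J\in\L(m)}\widetilde{\vphi}^s_\B(\I\K\J)\\
&\ge q^{-k}c\sum_{\I\in\L(n)}\widetilde{\vphi}^s_\B(\I)\sum_{\J\in\L(m)}\widetilde{\vphi}^s_\B(\J)
= q^{-k}c\,\alpha_n^s(\B)\,\alpha_m^s(\B).
\end{align*}
Finally, to pass from $\sum_{j=0}^{k}\alpha_{n+m+j}^s(\B)$ back to $\alpha_{n+m}^s(\B)$, note that for $0\le j\le k$ one has $\alpha_{n+m+j}^s(\B)\le q^{j}\Upsilon^{js}\,\alpha_{n+m}^s(\B)$ by appending $j$ symbols and using submultiplicativity with the norm bound $\Upsilon$, so $\sum_{j=0}^{k}\alpha_{n+m+j}^s(\B)\le (k+1)q^k\Upsilon^{ks}\,\alpha_{n+m}^s(\B)$. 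Combining, $\alpha_{n+m}^s(\B)\ge \frac{c}{(k+1)q^{2k}\Upsilon^{ks}}\,\alpha_n^s(\B)\,\alpha_m^s(\B)$, and taking logarithms gives almost superadditivity with $C=C_s:=\log\!\big((k+1)q^{2k}\Upsilon^{ks}/c\big)$, uniformly over $\B\in\mathcal{N}$.

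The main obstacle is bookkeeping rather than conceptual: one must be careful that the connecting word $\K$ depends on $\B$ (Remark~\ref{rem: nonuniform K0}), so the constants $c,k$ — not $\K$ itself — are what must be uniform, and that is exactly what Theorem~\ref{thm: E} provides. A secondary point is the case $s>d$, where $\widetilde{\vphi}^s_\B(\I)=\big(\max_{x\in[\I]}|\det\B^{|\I|}(x)|\big)^{s/d}$; here Remark~\ref{rem: s_0 for all R+} already supplies uniform quasi-multiplicativity constants on $[0,s_0]$ for any $s_0$, so the same counting argument applies verbatim. One should also double-check the at-most-$(q^k)$-to-one claim when some $\I\K\J$ happen to coincide for different $\K$; this only improves the inequality, so no harm. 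This completes the plan.
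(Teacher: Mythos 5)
Your proposal is correct and follows essentially the same route as the paper's proof: uniform constants $c,k$ from Theorem~\ref{thm: E} (with Remark~\ref{rem: s_0 for all R+} for $s>d$), summing the quasi-multiplicativity inequality over pairs $(\I,\J)\in\L(n)\times\L(m)$ to bound $c\,\alpha_n^s(\B)\alpha_m^s(\B)$ by $\sum_{j=0}^{k}\alpha_{n+m+j}^s(\B)$, and then comparing back to $\alpha_{n+m}^s(\B)$ via the one-step estimate $\alpha_{n+1}^s(\B)\le q\Upsilon^s\alpha_n^s(\B)$. Your only deviations are cosmetic: the map $(\I,\J)\mapsto\I\K\J$ is in fact injective once the image's length fixes $|\K|$ (so the extra $q^{-k}$ factor, and likewise the appeal to bounded distortion, are unnecessary but harmless), yielding the same conclusion with a slightly larger constant $C_s$.
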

\begin{proof}
There exists $C_1>0$ such that for any $n\in \N$, 
\begin{equation}\label{eq: superadditive 1}
\alpha^s_{n+1}(\A) \leq C_1 \alpha^s_{n}(\A).
\end{equation}
In fact, denoting the number of alphabets in $\Sig$ by $q$, set $C_1 = \Upsilon^s \cdot q$ where $\Upsilon = \max\limits_{x\in \Sig} \|\A(x)\|$. Increase $C_1$ slightly to ensure that \eqref{eq: superadditive 1} also holds for all $\B$ in a small neighborhood of $\A$. 

After shrinking the neighborhood if necessary, we have 
$$c \alpha_n^s(\B) \alpha_m^s(\B) \leq \sum\limits_{i=0}^k\alpha_{n+m+i}^s(\B) \leq \Big(\sum\limits_{i=0}^k C_1^i\Big) \alpha_{m+n}^s(\B)$$
where $c$ and $k$ are the uniform constants from simultaneous quasi-multiplicativity in Theorem \ref{thm: E} and Remark \ref{rem: s_0 for all R+}.
The lemma follows by setting $C =\log \Big(c^{-1} \cdot \sum\limits_{i=0}^k C_1^i\Big)$.
\end{proof}

We are now ready to prove Theorem \ref{thm: B}.

\begin{proof}[Proof of Theorem \ref{thm: B} (1)]
Let $\A \in \U$, $s \in [0,\infty)$, and $\ep>0$ be given.

First, we show that there exists $\d>0$ such that for any $\B$ sufficiently close to $\A$ and $t \in [0,\infty)$ with $|s-t|<\d$, we have
\begin{equation}\label{eq: Proof cty pressure (1)-1}
\Big| \P(\Phi_\B^s) - \P(\Phi_\B^t)\Big| <\ep/2.
\end{equation}

For any $\B$ near $\A$, consider the ratio
$$\frac{\widetilde{\vphi}_\B^s(\I)}{\widetilde{\vphi}_\B^t(\I)}, $$
for some $n\in \N$ and $\I \in \L(n)$. Suppose $x, y \in [\I]$ such that $\widetilde{\vphi}_\B^s(\I) = \vphi^s(\B^n(x))$ and $\widetilde{\vphi}^t_\B(\I) = \vphi^t(\B^n(y))$. We then write 
$$\frac{\widetilde{\vphi}_\B^s(\I)}{\widetilde{\vphi}_\B^t(\I)}  = \frac{\vphi^s(\B^n(x))}{\vphi^t(\B^n(y))} = \frac{\vphi^s(\B^n(x))}{\vphi^s(\B^n(y))} \cdot \frac{\vphi^s(\B^n(y))}{\vphi^t(\B^n(y))}.$$
Using the bounded distortion property (Lemma \ref{lem: bdd distortion} and Remark \ref{rem: unif cts hol implies bdd dist}) of $\B$, the first term in the ratio $\vphi^s(\B^n(x))/\vphi^s(\B^n(y))$ can be bounded above and below by $C_1$ and $C_1^{-1}$ for some uniform constant $C_1$ independent of $\B$ and $n$.

To bound the second term $\vphi^s(\B^n(y))/\vphi^t(\B^n(y))$ in the ratio, choose $\Upsilon $ so that it serves as an upper bound on $\max\limits_{x \in \Sig}\|\B(x)\|$ for any $\B$ sufficiently close to $\A$. If $|s-t| <\d$, then $\vphi^s(\B^n(y))/\vphi^t(\B^n(y))$ can be bounded above and below by $\Upsilon^{n\d}$ and $\Upsilon^{-n\d}$.
Then it follows from the definition of $\alpha_n^s(\B)$ that
$$\Big|\frac{1}{n}\log \alpha_n^s(\B)-\frac{1}{n}\log \alpha_n^t(\B)  \Big| \leq  \d\log \Upsilon + \frac{1}{n} \log C_1.$$
Sending $n$ to infinity, \eqref{eq: Proof cty pressure (1)-1} follows by setting $\d = \ep/(2\log\Upsilon)$.

We then show that there exists a neighborhood of $\A$ in $\U$ such that for any $\B$ in the neighborhood, 
\begin{equation}\label{eq: Proof cty pressure (1)-2}
\left|\P(\Phi_\B^s) - \P(\Phi_\A^s)  \right| <\ep/2.
\end{equation}

For any $t,n \in \N$, we write $n = qt+r$ with $0 \leq r  <t$. For all $\B$ in a small neighborhood of $\A$, Lemma \ref{lem: superadditive} gives
$$
- C\frac{ (q+1)}{n}+\frac{q}{n}\log\alpha_t^s(\B) + \frac{1}{n}\log\alpha_r^s(\B)  \leq \frac{1}{n}\log\alpha_n^s(\B) \leq \frac{q}{n}\log\alpha_t^s(\B) + \frac{1}{n}\log\alpha_r^s(\B).
$$
Notice that as $n \to \infty$, we have $q/n \to 1/t$ and $\frac{1}{n}\log \alpha_r^s(\B) \to 0$ because there are only $t$ possible values of $\alpha_r^s(\B)$. Sending $n \to \infty$,
$$  \Big| \P(\Phi_\B^s)-\frac{1}{t}\log \alpha_t^s(\B)  \Big| \leq \frac{C}{t}.$$

We choose $t \in \N$ large so that $C/t < \ep/8$. Then we shrink the neighborhood of $\A$ if necessary such that for any $\B$ in the neighborhood,
$$\Big| \frac{1}{t}\log \alpha_t^s(\A) - \frac{1}{t} \log \alpha_t^s(\B) \Big| < \ep/4.$$
Then for all $\B$ in such neighborhood of $\A$, \eqref{eq: Proof cty pressure (1)-2} follows.

Combining \eqref{eq: Proof cty pressure (1)-1} and \eqref{eq: Proof cty pressure (1)-2}, we have
$$|\P(\Phi_\A^s) - \P(\Phi_\B^t)|<\ep$$ 
for any $\B$ sufficiently close to $\A$ and any $t\in [0,\infty)$ with $|s-t|<\d = \ep/(2\log \Upsilon)$.
\end{proof}
\begin{proof}[Proof of Theorem \ref{thm: B} (2)]
From Proposition \ref{prop: unique eq as gibbs}, the equilibrium state $\mu_{\A,s}$ of $\Phi_\A^s$ is unique due to quasi-multiplicativity of $\Phi_\A^s$. Together with the continuity the map $(\A,s) \mapsto \P(\Phi_\A^s)$ on $\U \times [0,\infty)$, it follows that $\mu_{\A,s}$ also varies continuously on $\U \times [0,\infty)$. 

Indeed, suppose $(\A_n ,s_n) \in \U\times [0,\infty)$ converges to $(\A,s) \in \U \times  [0,\infty)$. By passing to a subsequence, let $\nu$ be any weak-$*$ limit of $\mu_{\A_n,s_n}$. We recall that two maps $\mu \mapsto h_\mu(f)$ and $(\Phi,\mu)  \mapsto \F(\Phi,\mu)$ are upper semi-continuous; the entropy map is upper semi-continuous from the expansivity of the base system $(\Sig,f)$, and $\F$ is upper semi-continuous from being an infimum of continuous functions. From Theorem \ref{thm: B} (1), $\nu$ must be an equilibrium state of $\Phi_\A^s$:
\begin{align*}
\P(\Phi_\A^s) = \lim\limits_{n \to \infty}\P(\Phi_{\A_n}^{s_n}) &= \lim\limits_{n \to \infty} h_{\mu_{\A_{n},s_n}}(f)+\F(\Phi_{\A_n}^{s_n},\mu_{\A_n,s_n}),\\
&\leq h_\nu(f) + \F(\Phi_\A^s,\nu).
\end{align*} 
Since $\A\in \U$, the equilibrium state $\mu_{\A,s}$ of $\Phi_\A^s$ is unique. Hence $\nu = \mu_{\A,s}$, as desired.
\end{proof}

\subsection{Applications in dimension theory and proof of Theorem \ref{thm: C}}
Theorem \ref{thm: B} has applications in the dimension theory of fractals. More specifically, we consider repellers of expanding maps. 
Let $M$ be a $d$-dimensional Riemannian manifold, and $h \colon M \to M$ be a $C^{1}$ map. 
\begin{defn}\label{defn: repeller}
A compact $h$-invariant subset $\Lambda \subset M$ is a \textit{repeller} if 
\begin{enumerate}
\item $h$ is expanding on $\Lambda$: there exists $\lambda>1$ such that $$\|D_xh(v)\| \geq \lambda\|v\|$$
for all $x \in \Lambda$ and $v \in T_xM$; 
\item there exists a bounded open neighborhood $V$ of $\Lambda$ such that 
$$\Lambda = \{x \in V \colon h^nx \in V \text{ for all } n\geq 0\}.$$
\end{enumerate}
\end{defn}

For any repeller $\Lambda$ and $s \in [0,d]$, we associate a subadditive sequence $\Phi_\Lambda^s = \{\log\vphi^s_{\Lambda,n}\}_{n \in \N}$ on $\Lambda$ where
$$ \vphi^s_{\Lambda,n}(x) :=  \vps((D_xh^n)^{-1}).$$
Then the function $s \mapsto \P(\Phi^s_\Lambda)$ is strictly decreasing, and
the equation
$$\P(\Phi_\Lambda^s) = 0$$ 
has a unique solution (see \cite{barreira2003dimension}, \cite{falconer1994bounded}, and \cite{ban2010dimensions}) which we denote by $s(\Lambda)$. Such equation is a variation of so-called \textit{Bowen's equation} first introduced in \cite{bowen1979hausdorff}, and its unique solution often carries geometric information of the underlying object. In our case, $s(\Lambda)$ is an upper bound for the upper box dimension of $\Lambda$:
\begin{prop}\cite{ban2010dimensions}
Let $\Lambda$ be a repeller. Then $s(\Lambda)$ is an upper bound on the upper box dimension of $\Lambda$; that is,
$$\overline{\dim}_B \Lambda \leq s(\Lambda).$$
\end{prop}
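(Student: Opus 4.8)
The plan is to show that $s(\Lambda)$, the unique zero of Bowen's equation $\P(\Phi_\Lambda^s) = 0$, dominates the upper box dimension by a standard covering argument, estimating the number of balls needed to cover $\Lambda$ in terms of the singular value function $\varphi^s$ of the inverse derivative cocycle. First I would fix a small $\ep_0 > 0$ and use the expanding/repelling structure from Definition \ref{defn: repeller} to build, for each large $n$, a natural cover of $\Lambda$ by the images under branches of $h^{-n}$ of a fixed ball; concretely, since $h$ is expanding and $\Lambda$ has a Markov partition (coding $\Lambda$ by a subshift of finite type $(\Sig, f)$ as in the introduction), the sets $h^{-n}$-images of partition elements have diameter comparable to $\|(D_x h^n)^{-1}\|$, and more precisely the image of a ball of radius $\ep_0$ under the branch of $h^{-n}$ through $x$ is contained in an ellipsoid whose semi-axes are comparable to the singular values $\alpha_i((D_xh^n)^{-1})$.

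The key step is the covering count: for a parameter $s \in [0,d]$, an ellipsoid with semi-axes $\alpha_1 \geq \ldots \geq \alpha_d$ can be covered by roughly $\prod_{i=1}^{\lceil s\rceil}(\alpha_i/\alpha_{\lceil s\rceil}) \cdot (\text{const})$ balls of radius $\alpha_{\lceil s\rceil}$, and iterating this across scales one gets that the $s$-dimensional covering quantity of $\Lambda$ at scale $\delta_n \sim \min_x \alpha_d((D_xh^n)^{-1})$ is controlled by $\sum_{|\I|=n} \varphi^s((D_xh^n)^{-1}) = \alpha_n^s$, in the notation of the excerpt. Taking $\frac1n\log$ of this sum and sending $n\to\infty$ gives exactly $\P(\Phi_\Lambda^s)$; hence if $\P(\Phi_\Lambda^s) < 0$, the covering numbers at scale $\delta_n$ decay, which forces $\overline{\dim}_B\Lambda \le s$. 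Since this holds for every $s > s(\Lambda)$ (where $\P(\Phi_\Lambda^s) < 0$ because $s \mapsto \P(\Phi_\Lambda^s)$ is strictly decreasing with zero at $s(\Lambda)$), letting $s \downarrow s(\Lambda)$ yields $\overline{\dim}_B\Lambda \le s(\Lambda)$.

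I would organize the proof as: (i) set up the Markov coding and record the bounded-distortion estimate comparing $\|(D_xh^n)^{-1}\|$ (and more generally all singular values of $(D_xh^n)^{-1}$) within each cylinder, which follows from the $C^1$ (indeed $C^r$, $r>1$) regularity of $h$ and the expansion; (ii) prove the geometric covering lemma that an ellipsoid with prescribed semi-axes needs at most $C\,\varphi^s(\cdot)/\alpha_d(\cdot)^s$ balls of radius $\alpha_d(\cdot)$, or more efficiently work scale by scale and use multiplicativity of the covering bounds along the orbit; (iii) assemble the global cover of $\Lambda$ at scale $\delta_n$ from the branch images and bound its cardinality by $C^n \alpha_n^s$ for some constant absorbing distortion; (iv) conclude via the identification $\P(\Phi_\Lambda^s) = \lim \frac1n\log\alpha_n^s$ and the monotonicity of the pressure.

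The main obstacle I anticipate is controlling the nonconformal geometry at intermediate scales: a single branch image $h^{-n}(B)$ is a very eccentric ellipsoid, and covering it efficiently with round balls requires either passing through intermediate iterates (so that at each step one only covers a boundedly eccentric set) or a careful direct count that tracks how many round balls of the "right" radius fit inside the ellipsoid — this is precisely where the singular value function $\varphi^s$ with its fractional last factor $\alpha_{\lceil s\rceil}^{\{s\}}$ enters, and getting the exponent bookkeeping exactly right (so that the covering number is genuinely $\lesssim \alpha_n^s \cdot \delta_n^{-s}$ and not off by a power) is the delicate part. A clean way around this is to cover at the uniform scale $\delta_n = \min_{|\I|=n}\alpha_{d}((D\cdot h^n)^{-1})$ and observe each ellipsoid of semi-axes $\alpha_1 \ge \cdots \ge \alpha_d$ meets at most $C\prod_{i=1}^{d}\max(1,\alpha_i/\delta_n) \le C\delta_n^{-d}\varphi^d(\cdot) \le C\delta_n^{-s}\varphi^s(\cdot)\cdot(\alpha_d/\delta_n)^{\,d-s}$ many $\delta_n$-balls; since $\alpha_d \ge \delta_n$ this last factor is bounded only after also bounding $\alpha_d/\delta_n$ uniformly, which again reduces to the bounded-distortion step from (i). Once that uniform comparability is in hand, the rest is routine.
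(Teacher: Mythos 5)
You are proving a statement the paper only quotes from \cite{ban2010dimensions}, so the comparison is with the standard covering argument there; your overall strategy (branch images of small balls are approximate ellipsoids with semi-axes given by the singular values of $(D_xh^n)^{-1}$, count balls via the Falconer-type estimate that produces $\vphi^s$, and conclude from $\P(\Phi_\Lambda^s)<0$ for $s>s(\Lambda)$) is indeed the right one. But the step you yourself flag as delicate is a genuine gap as written. Covering at the single scale $\delta_n=\min_{|\I|=n}\alpha_d\big((D_\cdot h^n)^{-1}\big)$ forces you to control the factor $\big(\alpha_d((D_xh^n)^{-1})/\delta_n\big)^{d-s}$, and this is a comparison \emph{across different cylinders}, not within one cylinder: orbits with different expansion rates make this ratio exponentially large in $n$, and no bounded distortion estimate (which only compares points in the same cylinder) can save it. With that factor uncontrolled, the claim that the $\delta_n$-covering number is $\lesssim \delta_n^{-s}\alpha_n^s$ is simply false in the genuinely nonconformal case; the same problem already sits inside your main paragraph, where "the covering quantity at scale $\delta_n$ is controlled by $\sum_{|\I|=n}\vphi^s$" is asserted without justification. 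A secondary issue: the proposition is stated for $C^1$ repellers, and for merely $C^1$ maps one does not have uniform bounded distortion of all singular values within cylinders; one only gets $e^{n\ep}$-corrections from uniform continuity of $Dh$ (which suffice, since $\ep$ is arbitrary and gets absorbed into the pressure), so item (i) of your outline should be phrased that way rather than as a bounded distortion claim.

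The standard repair, and the route taken in the cited literature, is a stopping-time (Moran) cover at a \emph{fixed} scale $\rho$ rather than a uniform iterate $n$: for each branch, iterate until the first time $n$ at which $\alpha_{\lceil s\rceil}\big((D_xh^{n})^{-1}\big)\le\rho$. At that moment all semi-axes of index $\ge\lceil s\rceil$ are below $\rho$, while $\alpha_{\lceil s\rceil}$ is comparable to $\rho$ (it dropped below $\rho$ in one step, and one step changes singular values by at most a uniform factor), so that piece needs at most $C\,\vphi^s\big((D_xh^{n})^{-1}\big)\rho^{-s}$ balls of radius $\rho$. Summing over the stopping family, and bounding it by $\sum_{n\ge n_0(\rho)}\sum_{|\I|=n}\widetilde\vphi^s(\I)\lesssim \sum_{n\ge n_0(\rho)}e^{n(\P(\Phi_\Lambda^s)+\ep)}$, which is finite and small when $\P(\Phi_\Lambda^s)<0$, yields a total count $N(\rho)\le C\rho^{-s}$; hence $\overline{\dim}_B\Lambda\le s$ for every $s>s(\Lambda)$, and letting $s\downarrow s(\Lambda)$ finishes the proof. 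With this replacement of your step (iii) (and the $e^{n\ep}$ version of step (i)), the rest of your outline goes through.
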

Such $s(\Lambda)$ is a good candidate for estimating the Hausdorff dimension of $\Lambda$, and there are many settings in which $s(\Lambda)$ is equal to the Hausdorff dimension. See \cite{barreira2003dimension}, \cite{falconer1994bounded}, and \cite{ban2010dimensions}.

\begin{rem}\label{rem: self-affine sets}
The idea of relating the Hausdorff dimension of dynamically defined fractals to the unique zero of the pressure function applies to other settings as well, including self-affine sets described in the introduction.

Let $\mathsf{T}=\{T_i\}_{i=1}^q$ be a set of affine transformations of $\R^d$ with 
$T_i(x) = A_ix+r_i$ 
where $A_i \in\glr$ is a contraction (i.e., $\|A_i\|<1$) and $r_i \in \R^d$ is a translation vector. Then there exists a unique self-affine attractor $X \subset \R^d$ invariant under $\mathsf{T}$ in the sense that
$$X = \bigcup\limits_{i=1}^q T_iX;$$
see \cite{hutchinson1981fractals}. 
Let $F_\A$ be the locally constant cocycle over the one-sided shift $(\Sigma^+_q,f)$ on $q$ alphabets generated by $\A(x) = A_{x_0}$. Then the unique zero $s(X)$ of the function
$$s \mapsto \P(\Phi_\A^s)$$
is an upper bound of, and often equal to, the Hausdorff dimension of $X$.

Indeed, Falconer \cite{falconer1988hausdorff} showed that when $\|A_i\|<1/3$ for each $i$, then for Lebesgue almost all translation vectors $(r_1,\ldots,r_q)$, the upper bound $s(X)$ is in fact equal to the Hausdorff dimension of $X$. Solomyak \cite{solomyak1998measure} then relaxed the assumption $\|A_i\|<1/3$ to $\|A_i\|<1/2$.

Moreover, B\'ar\'any, Hochman, and Rapaport \cite{barany2017hausdorff} recently considered self-affine sets $X\subset R^2$ satisfying the strong open set condition. They showed that under mild and checkable conditions, $s(X)$ is equal to the Hausdorff dimension of $X$.
\end{rem}

From the structural stability of hyperbolic sets, 
for any $C^1$-small perturbation $g$ of $h$, there exists a \textit{continuation} $\Lambda_g$ of $\Lambda$ such that $h|_\Lambda$ is conjugate to $g|_{\Lambda_g}$. In particular, $\Lambda_g$ is also a repeller (with respect to $g$). Notice from its definition (i.e., from the subadditivity of $\Phi_\Lambda^s$) that $s(\Lambda_g)$ varies upper semi-continuously in $g$.

We will now prove Theorem \ref{thm: C} by applying Theorem \ref{thm: B}. First, we introduce the analogue of the fiber-bunching condition on $\Lambda$.
\begin{defn}
Suppose $\Lambda$ is a repeller defined by $h$. For $\alpha\in (0,1)$, we say $h|_\Lambda$ is \textit{$\alpha$-bunched} if 
$$\|(D_xh)^{-1}\|^{1+\alpha} \cdot \|D_xh\| <1,$$
for all $x\in \Lambda$. 
\end{defn}

\begin{rem}
A natural class of $\alpha$-bunched repellers are small perturbations of conformal repellers.
\end{rem}

\begin{thm*}[Theorem \ref{thm: C}]
Let $M$ be a Riemannian manifold, and let $h \colon M \to M$ be a $C^{r}$ map with $r>1$. Suppose $\Lambda \subset M$ is a $\alpha$-bunched repeller defined by $h$ for some $\alpha\in (0,1)$ with $r-1>\alpha$. Then there exist a $C^1$-neighborhood $\VV_1$ of $h$ in $C^{r}(M,M)$ and  a $C^1$-open and $C^r$-dense subset $\mathcal{V}_2 \subset \mathcal{V}_1$ such that the map $$g \mapsto s(\Lambda_g)$$ is continuous on $\mathcal{V}_2$.
\end{thm*}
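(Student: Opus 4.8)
The plan is to reduce the statement to Theorem~\ref{thm: B} by coding the repeller symbolically. Fix a Markov partition of $\Lambda$ and let $(\Sig,f)$ be the (two-sided) subshift of finite type it produces; by structural stability of the repeller the same combinatorics code every continuation $\Lambda_g$, via a coding map $\chi_g\colon\Sig\to\Lambda_g$ (factoring through the one-sided shift) depending continuously on $g$, and after passing to the spectral decomposition of $\Lambda$ and a power we may assume $h|_\Lambda$ topologically mixing, so the adjacency matrix is primitive. To $g$ near $h$ attach the $\glr$-valued function $\A_g(x):=\bigl((D_{\chi_g(\pi x)}g)^{\intercal}\bigr)^{-1}$; since $\varphi^s$ is transpose-invariant, $\varphi^s(\A_g^n(x))=\varphi^s\bigl((D_{\chi_g(\pi x)}g^n)^{-1}\bigr)$, so $\Phi^s_{\A_g}$ is the pullback of the potential $\Phi^s_{\Lambda_g}$ defining $s(\Lambda_g)$, and $\P(\Phi^s_{\Lambda_g})=\P(\Phi^s_{\A_g})$ for every $s$ (a Hölder finite-to-one semiconjugacy and passage to the natural extension preserve the subadditive pressure). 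It then suffices to show that $g\mapsto s(\Lambda_g)$ is continuous on $\mathcal{V}_2:=\{g\in\mathcal{V}_1\colon\A_g\text{ is typical}\}$ for a suitable $C^1$-neighborhood $\mathcal{V}_1$.

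First I would settle the regularity picture. Take $\mathcal{V}_1$ small enough that $\alpha$-bunching (an open $C^1$ condition) holds throughout it and the continuations $\Lambda_g,\chi_g$ exist. Since $r-1>\alpha$, $Dg$ is $\min(1,r-1)$-Hölder on $M$ with exponent $>\alpha$, and $\chi_g$ is Hölder with an exponent bounded below uniformly on $\mathcal{V}_1$; hence $\A_g$ is $\alpha$-Hölder, and — as the conjugacies $\pi_g$ vary continuously in $C^0$ with uniform Hölder bounds — $g\mapsto\A_g$ is continuous into $C^\alpha(\Sig,\glr)$ (interpolating the $C^0$ convergence against the uniform Hölder bound). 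Moreover $F_{\A_g}$ admits uniformly continuous stable and unstable holonomies, varying continuously with $g$: the stable ones are trivial since $\A_g$ factors through $\pi$ and is therefore constant on local stable sets, while the unstable ones converge because the pointwise $\alpha$-bunching gives $\|(Dg^n)^{-1}\|\,\|Dg^n\|<\|(Dg^n)^{-1}\|^{-\alpha}$ along backward orbits and the extra Hölder exponent $r-1>\alpha$ then makes the defining telescoping series converge geometrically, at a rate uniform on $\mathcal{V}_1$. This is exactly the setting (Hölder cocycle with uniformly continuous holonomies) in which Theorem~\ref{thm: qm general} and the bounded-distortion estimate (Lemma~\ref{lem: bdd distortion}, Remark~\ref{rem: unif cts hol implies bdd dist}) hold, so the argument proving Theorem~\ref{thm: B} applies verbatim to $\A_g$ once $\A_g$ is typical.

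Next, $\mathcal{V}_2$ is $C^1$-open and $C^r$-dense. Openness is routine: typicality with respect to a fixed periodic point $p$ and homoclinic point $z$ is an open condition on the finitely many relevant cocycle and holonomy values (simplicity and separation of the eigenvalues of $P=\A^{\mathrm{per}(p)}(p)$ and of its exterior powers, nonvanishing of the twisting coefficients of $\psi_p^z$), and periodic and homoclinic orbits have continuations under $C^1$-perturbations, so the $C^1$-continuous assignment $g\mapsto\A_g$ pulls these open conditions back to a $C^1$-open set. Density is the main obstacle and requires a genuine perturbation argument. Given $g\in\mathcal{V}_1$ and $\ep>0$, I would first perturb $g$ inside an arbitrarily small neighborhood of a periodic orbit — composing with a $C^r$-small diffeomorphism whose derivative at $p$ is a prescribed matrix close to the identity — to make the eigenvalues of $P$ real, simple, of distinct norms, with all $t$-fold products distinct, i.e.\ to achieve (A0) and (A1); then perturb $g$ near a point on the orbit of a homoclinic point of $p$, in a region disjoint from the periodic orbit and the first perturbation, to move the holonomy loop $\psi_p^z$ and its exterior powers into general position with respect to the eigenvectors of $P$, i.e.\ (B0) and (B1) — finitely many nondegeneracy conditions, achieved after an arbitrarily small further perturbation. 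Both perturbations can be kept $C^r$-small, hence small enough to preserve $\alpha$-bunching and all continuations; this is the analogue, within the constrained class of derivative cocycles of $C^r$ maps, of the genericity arguments of Bonatti--Viana and Avila--Viana, and is the step I expect to cost the most work.

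Finally, on $\mathcal{V}_2$ we combine continuity of $g\mapsto\A_g$ with Theorem~\ref{thm: B}(1) to obtain that $(g,s)\mapsto\P(\Phi^s_{\Lambda_g})=\P(\Phi^s_{\A_g})$ is continuous on $\mathcal{V}_2\times[0,d]$, an interval containing $s(\Lambda_g)\in[0,d]$. Since $s\mapsto\P(\Phi^s_{\Lambda_g})$ is strictly decreasing with the unique zero $s(\Lambda_g)$, continuity of $g\mapsto s(\Lambda_g)$ follows by an elementary bracketing argument: for $g_0\in\mathcal{V}_2$ and $0<\ep\le s(\Lambda_{g_0})$ (and one-sidedly if $s(\Lambda_{g_0})=0$), strict monotonicity gives $\P(\Phi^{s(\Lambda_{g_0})-\ep}_{\Lambda_{g_0}})>0>\P(\Phi^{s(\Lambda_{g_0})+\ep}_{\Lambda_{g_0}})$; these two strict sign conditions persist for $g$ near $g_0$ by continuity in $g$; and the intermediate value theorem in $s$ together with uniqueness of the zero then forces $s(\Lambda_g)\in(s(\Lambda_{g_0})-\ep,\,s(\Lambda_{g_0})+\ep)$.
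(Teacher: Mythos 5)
Your overall reduction is the same as the paper's: code $\Lambda$ by a Markov partition, replace the derivative data by a H\"older, fiber-bunched cocycle over the shift whose singular-value potentials pull back $\Phi^s_{\Lambda_g}$ (you take $\bigl((Dg)^{\intercal}\bigr)^{-1}$ over $f$, the paper takes $(Dg)^{-1}$ over $(\Sig,f^{-1})$; these are adjoint to one another and interchangeable), identify $\P(\Phi^s_{\A_g})$ with $\P(\Phi^s_{\Lambda_g})$, restrict to the set where the cocycle is typical, apply Theorem \ref{thm: B}, and conclude via strict monotonicity of $s\mapsto\P(\Phi^s_{\Lambda_g})$ (your bracketing argument is the step the paper leaves implicit). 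Note that your one-line justification of the pressure identification is itself a lemma in the paper (Lemma \ref{lem: P=P}, proved via the Ledrappier--Walters relativised variational principle together with a correspondence of separated sets); the reason you give is the right one, so this is only a citation-level gap.

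The genuine gap is in the $C^r$-density of typicality, specifically the pinching conditions (A0)/(A1). Your proposed mechanism --- compose $g$ near the periodic orbit with a $C^r$-small diffeomorphism whose derivative at $p$ is a prescribed matrix close to the identity --- cannot achieve (A0): such a perturbation only replaces $P=D_pg^{\mathrm{per}(p)}$ by $RP$ with $R$ near $I$, and if $P$ has a complex conjugate pair of eigenvalues whose argument is bounded away from $0$ and $\pi$, every such $RP$ still has a complex pair. No small perturbation localized at the periodic point makes the eigenvalues real, which is exactly why the paper's Lemma \ref{lem: VV_2 typical} (following Section 9 of Bonatti--Viana) argues differently: after a preliminary perturbation arranging a strong twisting condition and hence a dominated splitting on a neighborhood of the homoclinic loop, it passes to new periodic points $x_m$ shadowing the homoclinic orbit with an $m$-long sojourn near $p$, runs a one-parameter family of small rotations $R_{t\ep}$ in the invariant two-plane $E^j$, and uses the long sojourn to find $t_0$ and $m$ for which the rotation number of the return map on $E^j$ is an integer; only then does an arbitrarily small further perturbation split the complex pair into real eigenvalues of distinct norms, and the procedure is iterated over the remaining complex pairs. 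You correctly flag this as the hardest step and cite Bonatti--Viana, but the concrete perturbation you describe would fail, and since the whole theorem rests on $C^r$-density of typicality within the constrained class of derivative cocycles, this step requires the full rotation-number argument. The remaining ingredients of your proposal (openness of typicality, the twisting perturbation near $z_0$, bounded distortion and holonomies for $\A_g$, and the final monotonicity argument) are correct and match the paper.
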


\begin{rem}\label{rem: VV_1 small}
The neighborhood $\VV_1$ is chosen such that $\Lambda$ has a continuation $\Lambda_g$ for every $g \in \VV_1$.
\end{rem}

We begin by relating the setting of Theorem \ref{thm: C} to the setting in Theorem \ref{thm: B}. It is well-known that the dynamics on any repeller can be coded by a one-sided subshift of finite type $(\Sig^+,f)$ via a Markov partition $\mathcal{R}$ of arbitrarily small diameter. See \cite{bowen1979hausdorff}, \cite{ruelle1982repellers}, \cite{ruelle1989thermodynamic} for discussions on Markov partitions and the coding of repellers into subshifts of finite type. 

Once we fix such a Markov partition $\mathcal{R}$ for $\Lambda$, there exists a continuous and surjective coding map 
$$\chi \colon \Sig^+ \to \Lambda$$
such that $\chi \circ f = h \circ \chi$. The number card$(\chi^{-1}x)$ is bounded on $\Lambda$, and there exists a residual set $\widetilde{\Lambda} \subset \Lambda$ such that every $x \in \widetilde{\Lambda}$ has a unique pre-image under $\chi$. 

We now take the natural extension $(\Sig,f)$ of $(\Sig^+,f)$, and consider its inverse $(\Sig,f^{-1})$. Recalling that $\pi \colon \Sig \to \Sig^+$ is the projection map, we define a cocycle $F_\B$ over $(\Sig,f^{-1})$ generated by
\begin{equation}\label{eq: defn of B}
\B(x)  = (D_{\chi(\pi x)}h)^{-1}.
\end{equation}

The reason why we consider $F_\B$ other than the usual derivative cocycle is because $\B^n \colon \Sig \to \glr$ is related to $\vps_{\Lambda,n} \colon \Lambda \to \glr$ in a following way: for any $x\in \Sig$ and $n \in \N$, we have
\begin{equation}\label{eq: B and Lambda}
\B^n(f^{n-1}x) =  (D_{\chi (\pi x)}h)^{-1} \ldots (D_{\chi(\pi (f^{n-1}x))}h)^{-1} =\vps_{\Lambda,n}(\chi(\pi x)). 
\end{equation}

Let $g$ be a $C^1$-small perturbation of $h$ in $C^{r}(M,M)$. If the perturbation is sufficiently small, then we may use the same Markov partition $\mathcal{R}$ of $\Lambda$ to code the dynamics of $g$ on $\Lambda_g$ via $\chi_g$, and take its natural extension. Then we realize the perturbation $h|_\Lambda$ to $g|_{\Lambda_g}$ as the perturbation of the cocycle $F_\B$ to $F_{\B_g}$ over the same subshift of finite type $(\Sig,f^{-1})$ where $\B_g(x) = (D_{\chi_g(\pi x)}g)^{-1}$. 

Now consider the typicality assumption on the cocycle $F_\B$ over $(\Sig,f^{-1})$. We fix $\theta$, the constant defining the metric on the base $\Sig$, such that $\theta < \|(D_xh)^{-1}\|$ for all $x \in \Lambda$. If $h$ is $C^{r}$ and $\alpha$-bunched for some $r>1$ and $\alpha \in (0,1)$ satisfying $r-1>\alpha$, then the corresponding cocycle $F_\B$ over $(\Sig,f^{-1})$ is also fiber-bunched. Denoting the canonical holonomies of $F_\B$ by $H^{s/u,-}$ (the minus sign in the superscript indicates that the cocycle is over $(\Sig,f^{-1})$), the local unstable holonomy $H^{u,-}$ is trivial from the definition \eqref{eq: defn of B} of $\B$: $H^{u,-}_{x,y} \equiv I$ for any $y$ in the local unstable set of $x$ with respect to $f^{-1}$. 

A homoclinic point $z$ of a fixed point $p$ in $\Sig$ corresponds to a sequence of points $\{z_n\}_{n \in \N_0} \in \Lambda$ such that $z_0 =\chi (\pi z)$, $h^{\ell}z_0 = \chi(\pi p)$ for some $\ell \in \N$ and 
\begin{equation}\label{eq: z_n Lambda}
hz_n = z_{n-1}, \text{ and } z_n \xrightarrow{n \to \infty} \chi(\pi p).
\end{equation}

Symbolically, if $ p = [\ldots aa \hat{a}aa \ldots] \in \Sig$ and $z = [\ldots a \hat{a}b_1\ldots b_{\ell-1}aa\ldots] \in \Sig$, then for each $n \in \N$, (from now on, we will drop the notation for the coding map $\chi$ between $\Sig^+$ and $\Lambda$) we have
$$z_n = [\underset{n+1}{\underbrace{a\ldots a}}b_1\ldots b_{\ell-1}aa\ldots] \in \Sig^+.$$ 
Moreover, $H^{s,-}_{z,p}$ is given by 
$$H^{s,-}_{z,p} = \lim\limits_{n \to \infty} \left[(D_{\pi p}h)^{n} (D_{ z_{n-1}}h)^{-1}\ldots (D_{ z_0}h)^{-1}\right].$$
Using the fact that $h^\ell z_0 = \pi p$, we have $H^{u,-}_{p,f^\ell z} = I$, and 
$$H^{u,-}_{p,z} = (D_{hz_0}h)^{-1} \ldots (D_{h^{\ell-1}z_0}h)^{-1} (D_{\pi p}h)^{\ell-1}.$$

Via $H^{s/u,-}$, the holonomy loop $\psi_{p}^{z,-}$ with respect to $F_\B$ over $(\Sig,f^{-1})$ is given by 
$$\psi_p^{z,-} = H^{s,-}_{z,p} \circ H^{u,-}_{p,z},$$
where $H^{s,-}_{z,p}$ and $H^{u,-}_{p,z}$ are given as in the paragraph above. We say that an $\alpha$-bunched repeller $\Lambda$ defined by $h$ (or simply $h$) is \textit{typical} if the corresponding cocycle $F_\B$ is typical over $(\Sig,f^{-1})$.

\begin{lem}\label{lem: VV_2 typical}
Let $h \colon M \to M$ be a $C^{r}$ map defining an $\alpha$-bunched repeller $\Lambda$. Then there exist a $C^1$-neighborhood $\VV_1$ of $h$ in $C^{r}(M,M)$ and a $C^1$-open and $C^r$-dense subset $\VV_2$ of $\VV_1$ such that any $g \in \VV_2$ is typical.
\end{lem}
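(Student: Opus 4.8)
The plan is to take $\VV_1$ as in Remark~\ref{rem: VV_1 small}, shrunk so that for every $g \in \VV_1$ the continuation $\Lambda_g$ is a repeller coded by the same Markov partition --- hence by the same primitive subshift $(\Sig,f)$ --- and the induced cocycle $F_{\B_g}$ over $(\Sig,f^{-1})$ is fiber-bunched; this uses that $\alpha$-bunching is $C^1$-open and that $r-1>\alpha$ makes each $\B_g$ H\"older. Put
$$\VV_2 := \{\, g \in \VV_1 \colon g \text{ is typical}\,\} = \{\, g \in \VV_1 \colon F_{\B_g} \in \U \,\}.$$
Openness is the easy half: typicality is witnessed by a periodic point $p$ and a homoclinic point $z$ of the \emph{fixed} base $(\Sig,f^{-1})$ through finitely many strict inequalities --- the eigenvalue moduli of the periodic datum $P_g := \B_g^{\text{per}(p)}(p)$ are pairwise distinct, all $t$-fold products of its eigenvalues are distinct for $1\le t\le d-1$, and the twisting coefficients of $(\psi_p^{z,-})^{\wedge t}$ in the eigenbasis of $P_g^{\wedge t}$ are nonzero --- and both $P_g$ and the holonomy loop $\psi_p^{z,-}$ (built from $\B_g^\ell(z)$ and the canonical holonomies via \eqref{eq: psi_p^z}, which converge uniformly on $\VV_1$ by the uniform fiber-bunching) depend continuously on $g$, so these conditions persist on a neighborhood.

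The content is $C^r$-density. Fix $g_0 \in \VV_1$ and a $C^r$-neighborhood $\mathcal{W}\subset\VV_1$; I would produce a typical $g \in \mathcal{W}$ by two localized perturbations, paralleling the density argument of \cite{bonatti2004lyapunov} in the space of maps. \emph{Step 1 (pinching).} First perturb $g_0$ to some $g_1 \in \mathcal{W}$ possessing a periodic point $p$ whose periodic datum $P := \B_{g_1}^{\text{per}(p)}(p)$ has $d$ eigenvalues of pairwise distinct moduli --- for a real matrix this already forces these eigenvalues to be real and simple, since a non-real eigenvalue lies in a conjugate pair of equal modulus --- and then, after a further tiny perturbation supported near the orbit of $p$ (which keeps $P$ in the open set of matrices with $d$ distinct real eigenvalue moduli while nudging its eigenvalues), such that all $t$-fold products of the eigenvalues of $P$ are distinct for every $1\le t\le d-1$; then $P$ satisfies (A0) and every (A1). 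This is the delicate step, discussed below.

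\emph{Step 2 (twisting).} Let $\{v_i^{(t)}\}$ be the eigenvectors of $P^{\wedge t}$, and fix a homoclinic point $z$ of $p$ with $\chi_{g_1}(\pi z)$ off the $\Lambda_{g_1}$-orbit of $p$ (possible since homoclinic points are dense). Precompose $g_1$ with a diffeomorphism supported in a small ball around $\chi_{g_1}(\pi z)$, disjoint from the orbit of $p$ and from the support of Step~1, fixing $\chi_{g_1}(\pi z)$ and with prescribed derivative near $I$ there; this yields $g \in \mathcal{W}$ with $P_g = P$ unchanged --- so (A0) and every (A1) persist --- while a single factor of $\psi_p^{z,-}$ in \eqref{eq: psi_p^z} is modified, so that $\psi_p^{z,-}$ ranges over a full neighborhood of its old value in $\glr$ as the local diffeomorphism varies. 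For each $t$ and each relevant index pair, nonvanishing of a twisting coefficient excludes a proper algebraic subset of $\glr$; as in \cite{bonatti2004lyapunov} the complement of the finite union of these sets is nonempty, hence open and dense, so $g$ can be chosen so that $\psi_p^{z,-}$ satisfies (B0) and every (B1). Then $g$ is typical and lies in $\mathcal{W}$, proving density.

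The main obstacle is Step~1: perturbing $g_0$ near the orbit of a \emph{fixed} periodic point only moves $P$ inside a neighborhood of its unperturbed value, which need not meet the open region of matrices with $d$ distinct eigenvalue moduli (this happens, for instance, when $g_0$ defines a conformal repeller, where every return datum has all eigenvalue moduli equal). The remedy is to import the perturbation machinery of \cite{bonatti2004lyapunov}, transplanted from cocycles to maps: one works with a periodic orbit of large period and composes $g_0$ with $C^r$-small local diffeomorphisms along a suitably spread-out segment of that orbit, so that their cumulative effect on $\B_g^{\text{per}(p)}(p)$ is large enough to reach the desired open region --- equivalently, to create a one-dimensional dominated splitting along the orbit, which forces a simple real spectrum of distinct moduli at $p$ --- while each individual perturbation, hence $g$, stays $C^r$-close to $g_0$. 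Granting this, the remaining ingredients --- the elementary genericity of the spectral and twisting conditions, and the check that composing with a small local diffeomorphism realizes an arbitrary nearby value of $P_g$, resp.\ of $\psi_p^{z,-}$, while leaving the repeller, its coding, the fiber-bunching, and the other datum intact --- are routine.
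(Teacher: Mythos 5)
Your overall architecture does match the paper's: shrink $\VV_1$ so that every $g$ has a continuation coded by the same Markov partition and $F_{\B_g}$ is fiber-bunched, let $\VV_2$ be the set of typical maps, get $C^1$-openness from persistence of the finitely many strict spectral and twisting conditions at a fixed symbolic periodic/homoclinic pair, and get $C^r$-density of the twisting conditions (B0)/(B1) by a perturbation supported near $z_0$ that modifies essentially only the factor $(D_{z_0}g)^{-1}$ of $\psi_p^{z,-}$ while leaving $P$ untouched; your explicit treatment of (A1) and (B1) is, if anything, more careful than the paper's write-up. The genuine gap is your Step 1, which you flag as "the delicate step" and then black-box with "granting this": that step is the actual content of the paper's proof, and the mechanism you gesture at is not the one that works. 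You propose composing $g_0$ with $C^r$-small local diffeomorphisms "along a suitably spread-out segment" of a long periodic orbit so that their "cumulative effect is large enough to reach the desired open region," but largeness of the cumulative perturbation is not the issue --- controlling the eigenvalue structure of a long product of nearly conformal matrices is --- and you give no mechanism for why that product should acquire $d$ real eigenvalues of pairwise distinct moduli. Moreover, in the $C^r$ topology with $r>1$ the admissible derivative change at a point scales like (support radius)$^{r-1}$, so a Franks-type scheme that perturbs the derivative independently at many points of a long orbit (whose points are forced close together) is not available; this is precisely why the $C^r$-density argument has the structure it does.

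The missing structure, following Section 9 of \cite{bonatti2004lyapunov} (which is what the paper reproduces), is: first perturb near a homoclinic point of $p$ to install a strong twisting condition; this yields a robust dominated splitting, with the same index as the eigenspace splitting of $P$ (in particular isolating each complex-conjugate $2$-plane $E^j$), over every invariant set in a neighborhood $\mathcal{N}$ of the homoclinic loop. Then take periodic points $x_m$ shadowing the homoclinic loop, whose orbits spend $m$ consecutive iterates near $p$, and apply a one-parameter family of $C^r$-small rotations supported in a \emph{fixed} ball around $p$ acting in the plane $E^j$: because the orbit of $x_m$ passes through that ball $m$ times, the rotation number of the return map of $x_m$ restricted to $E^j$ moves by roughly $m t\ep$, so for suitable $t_0$ and large $m$ it becomes an integer, after which a further tiny perturbation near $x_m$ preserving $E^j$ splits that block into two distinct real eigenvalues; induction over the complex blocks produces the periodic point satisfying (A0). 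Note also the ordering this forces: a twisting-type perturbation must \emph{precede} the pinching step, since it supplies the dominated splitting that the rotation-number argument needs, whereas your plan performs all twisting after pinching. As written, Step 1 of your proposal is a gap rather than a routine appeal to the cited machinery.
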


\begin{proof} 
As mentioned in Remark \ref{rem: VV_1 small}, we begin by choosing $\VV_1$ sufficiently small so that $\Lambda$ has a continuation $\Lambda_g$ for every $g \in \VV_1$.

We code $h|_\Lambda$ using a Markov partition to a one-sided subshift $(\Sig^+,f)$ and take its natural extension $(\Sig,f)$. Then consider $F_\B$ over $(\Sig,f^{-1})$ defined as in \eqref{eq: defn of B}. By choosing $\mathcal{V}_1$ sufficiently small, we ensure that $\Lambda_g$ for every $g\in \VV_1$ can be coded by the same Markov partition.
For simplicity, we will continue to suppress the notation for the coding map $\chi_g:\Sig^+ \to \Lambda_{g}$ and write $\B_{g}(x) = (D_{\pi x}g)^{-1}$ where $\pi x$ refers to $\chi_{g}(\pi x) \in \Lambda_{g}$. 

Following Section 9 of \cite{bonatti2004lyapunov}, we will show that the pinching condition (A0) is $C^r$-dense via the claim below and briefly sketch the proof here.
\\\\
\noindent\textbf{Claim}: given any $C^r$-neighborhood $\W$ of $\VV_1$, there exists $g \in \W$ and a periodic point $p_{g} \in \Lambda_{g}$ such that $D_{p_{g}} g^{\text{per}(p_{g})}$ has simple real eigenvalues of distinct norms.
\\\\
First, notice that the lemma follows from the claim. Indeed, suppose there exists a fixed (or periodic) point $p \in \Lambda_{g_0}$ of some $g_0 \in \VV_1$ such that $D_pg_0$ has simple real eigenvalues of distinct norms. For $g$ sufficiently $C^1$-close to $g_0$, the property of having simple real eigenvalues of distinct norms persists at $D_{p_g}g$ where $p_g$ is the continuation of $p$ with respect to $g$. Denoting the corresponding fixed point in $\Sig$ by $\widetilde{p}_g$, the property of having simple real eigenvalues of distinct norms is equivalent on $D_{p_g}g$ and its inverse $\B_g(\widetilde{p}_g)=(D_{p_g} g)^{-1}$. Hence, the pinching condition (A0) on $F_{\B}$ is $C^1$-open. Moreover, it is $C^r$-dense in $\VV_1$ assuming that the claim holds. 

It is clear that the twisting condition (B0) on $F_{\B_g}$ is $C^1$-open because the canonical holonomies $H^{s/u,-}$ vary continuously in $g$.
The twisting condition is also $C^r$-dense; given any $\{z_n\}_{n \in \N_0}$ homoclinic (as in \eqref{eq: z_n Lambda}) to a periodic point whose derivative of the return map has simple real eigenvalues of distinct norms, the twisting assumption (B0) on $F_{\B_g}$ can be obtained with an arbitrarily small $C^r$-perturbation of $g$ near $z_0$. 
This is because an arbitrarily small $C^r$-perturbation of $g$ near $z_0$ only changes $(D_{z_0}g)^{-1}$ without affecting other terms in $\psi_p^{z,-}$, and the perturbation can be chosen to destroy any configuration preventing the twisting condition (B0) on $\psi_p^{z,-}$. Hence, in order to prove the lemma, it suffices to prove the claim.
\begin{proof}[Proof of claim]
Let $g_0$ be any map in $\W$. Given any fixed (or periodic) point $p \in \Lambda_{g_0}$, upon a small $C^r$-perturbation of $g_0$ near $p$, we assume that $P := D_pg_0$ has simple real eigenvalues of distinct norms except for some pairs of complex conjugate eigenvalues. Fix any sequence $\{z_n\}_{n \in \N_0}$ homoclinic to $p$ as in \eqref{eq: z_n Lambda}, and let $z$ be the corresponding homoclinic point in $\Sig$. Upon another small perturbation of $g_0$ near $z_0$, we assume stronger twisting condition (i.e., original formulation in \cite{bonatti2004lyapunov}) holds for $\psi_p^{z,-}$. From such twisting condition, it follows that there exists a small neighborhood $\mathcal{N}$ around $(\text{orbit of }z_0) \cup p$ such that any $g_0$-invariant set in $\mathcal{N}$ admits a $Dg_0$-invariant dominated splitting $E^1 \oplus \ldots \oplus E^k$ which agrees with the eigenspace splitting of $P$ at $p$.  

Denoting $p = [aa\ldots] \in \Sig^+$, consider a periodic point $x_m \in \Sig^+$ which repeats the word $ab_1\ldots b_{\ell-1}\underset{m}{\underbrace{a\ldots a}} \in \L(\ell+m)$. We denote the corresponding periodic point in $\Sig$ by $\widetilde{x}_m$.
When $m$ is sufficiently large, the orbit of $x_m$ belongs to $\mathcal{N}$.
Since the dominated splitting is robust, there exists a dominated splitting over the orbit of $x_m$ (for all sufficiently large $m$) with respect to any sufficiently small $C^r$-perturbation $g$ of $g_0$. Moreover, such splitting has the same index as the eigenspace splitting of $P$ at $p$.

Assuming $E^1 \oplus \ldots \oplus E^k$ is ordered in the decreasing norm of the eigenvalues of $P$, let $j$ be the largest index such that $E^j$ is 2-dimensional (i.e., corresponds to a pair of complex conjugate eigenvalues).  
Then consider a 1-parameter family of perturbations $g_t, t \in [0,1]$ near $p$ such that $Dg_t$ near $p$ is given by the post-composition of $Dg_0$ with a rotation $R_{t\ep}$ by angle $t\ep$ along the $E^j$-plane. Here $\ep>0$ is chosen sufficiently small so that $g_t$ remains in $\W$ for all $t \in[0,1]$. 

We can then show that given any small $\d>0$, there exists $t_0 \in [0,\d]$ and a sufficiently large $m$ such that the rotation number of $\B_{g_{t_0}}^{\ell+m}(\widetilde{x}_m)|_{E^j}$ is an integer. By an arbitrarily small $C^r$-pertubation of $g_{t_0}$ near $x_m$ preserving $E^j$, we can ensure that $\B_{g_{t_0}}^{\ell+m}(\widetilde{x}_m)|_{E^j}$ has two real and distinct eigenvalues. Repeating this process on $g_{t_0}$ and $x_m$, we inductively resolve all complex conjugate pairs of eigenvalues into real eigenvalues of distinct norms by arbitrary small $C^r$-perturbations. See Section 9 in \cite{bonatti2004lyapunov} for more details.
\end{proof}
This completes the proof of the lemma.
\end{proof}

\begin{rem}\label{rem: VV_2 typical}
The main content in the proof of Lemma \ref{lem: VV_2 typical} shows that the pinching condition (A0) is $C^r$-dense in $\VV_1$. Then we concluded that there exists a $C^1$-open and $C^r$-dense subset $\VV_2$ of $\VV_1$ such that the cocycle ${\B_g}(x) = (D_{\pi x}g)^{-1}$ over $(\Sig,f^{-1})$ is typical for every $g\in \VV_2$. From the same result, we can also conclude that there exists another $C^1$-open and $C^r$-dense subset $\widetilde{\VV}_2$ of $\VV_1$ such that the derivative cocycle $Dg$ is typical (in the sense of Definition \ref{defn: 1-typical}) for every $g\in \widetilde{\VV}_2$.
This remark will be useful in proving Corollary \ref{cor: perturb conformal Lyap spectrum} in Section 6. 
\end{rem}

Combining Theorem \ref{thm: B} and Lemma \ref{lem: VV_2 typical}, the map
$$(g,s) \mapsto \P(\Phi^s_{\B_g})$$
is continuous on $\VV_2 \times [0,\infty)$. Since $s(\Lambda)$ is the unique zero of the pressure function $s \mapsto \P(\Phi^s_\Lambda)$, the following lemma relates the pressure $\P(\Phi^s_\B)$ defined over $(\Sig,f^{-1})$ to the pressure $\P(\Phi_\Lambda^s)$ defined over $(\Lambda,h|_\Lambda)$.
\begin{lem}\label{lem: P=P}
$\P(\Phi^s_\B) = \P(\Phi_\Lambda^s)$.
\end{lem}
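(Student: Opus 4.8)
The plan is to show that the two subadditive pressures agree by exhibiting a pressure-preserving correspondence between $(n,\ep)$-separated sets in $\Sig$ (for the cocycle $F_\B$ over $(\Sig,f^{-1})$) and in $\Lambda$ (for the potential $\Phi_\Lambda^s$ over $(\Lambda,h)$). The key observation is the identity \eqref{eq: B and Lambda}, which says that for $x \in \Sig$ and $n \in \N$ we have $\B^n(f^{n-1}x) = \vphi^s_{\Lambda,n}(\chi(\pi x))$ up to the singular value function; more precisely $\vps(\B^n(f^{n-1}x)) = \vphi^s_{\Lambda,n}(\chi(\pi x))$. Since $\chi\circ\pi \colon \Sig \to \Lambda$ is a finite-to-one continuous surjection with $\chi\circ\pi\circ f^{-1} = h^{-1}\circ(\chi\circ\pi)$ in the appropriate sense, orbit segments of length $n$ in $\Sig$ under $f^{-1}$ project to orbit segments of length $n$ in $\Lambda$ under $h$, and the value of the potential along corresponding segments matches.

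Concretely, first I would recall that the pressure of a subadditive potential over a subshift of finite type can be computed using $(n,1)$-separated sets that are unions of $n$-cylinders (as noted before Lemma \ref{lem: superadditive}, citing \cite{keller1998equilibrium}), so that $\P(\Phi_\B^s) = \lim_{n\to\infty}\frac1n\log\sum_{\I\in\L(n)}\max_{x\in[\I]}\vps(\B^n(x))$. Second, I would observe that because the canonical holonomies of $F_\B$ are \hol and the cocycle is fiber-bunched, $\Phi_\B^s$ has bounded distortion (Lemma \ref{lem: bdd distortion}), so the max over each cylinder is comparable, up to a uniform multiplicative constant, to the value at any chosen point; hence $\P(\Phi_\B^s) = \lim_{n\to\infty}\frac1n\log\sum_{\I\in\L(n)}\vps(\B^n(x_\I))$ for any choice $x_\I\in[\I]$. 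Third, using \eqref{eq: B and Lambda} I would rewrite $\vps(\B^n(x_\I)) = \vphi^s_{\Lambda,n}(\chi(\pi f^{-(n-1)}x_\I))$, translating the sum over $n$-cylinders of $\Sig$ into a sum over the corresponding collection of points in $\Lambda$. Fourth, I would invoke the standard properties of the Markov partition coding: the images under $\chi$ of distinct $n$-cylinders have pairwise disjoint interiors and form a cover of $\Lambda$ by sets of diameter shrinking with $n$ (because $h$ is expanding), and the coding is uniformly bounded-to-one; this identifies the cylinder sums for $F_\B$ with the sums defining $\P(\Phi_\Lambda^s)$ over $(n,\ep)$-separated subsets of $\Lambda$, up to bounded multiplicative error coming from the bounded-to-one multiplicity and the bounded distortion of $\Phi_\Lambda^s$ on each partition element (which follows from the $C^r$, $r>1$, regularity of $Dh$ on the expanding set $\Lambda$). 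Taking $\frac1n\log$ and letting $n\to\infty$, the bounded errors vanish and the two limits coincide.

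The main obstacle is the bookkeeping around the coding map rather than any deep idea: one must be careful that $\chi$ is only a semiconjugacy (finite-to-one, continuous, surjective, not injective), so cylinders in $\Sig$ may overlap in $\Lambda$ along the boundary of the Markov partition, and one must check that the potential $\Phi_\Lambda^s$ itself has bounded distortion within each element of the refined partition $\bigvee_{i=0}^{n-1}h^{-i}\mathcal{R}$ so that evaluating at a single preimage point is legitimate — this uses the $\alpha$-\hol dependence of $Dh$ together with expansion, exactly as in \cite{bowen1979hausdorff} or \cite{ruelle1982repellers}. I expect the cleanest route is to cite the standard fact (from the references on thermodynamic formalism of repellers, e.g. \cite{bowen1979hausdorff}, \cite{ruelle1982repellers}, \cite{ruelle1989thermodynamic}) that the pressure of a subadditive potential is invariant under a finite-to-one \hol semiconjugacy arising from a Markov partition, and then simply verify that the potentials $\Phi_\B^s$ on $\Sig$ and $\Phi_\Lambda^s$ on $\Lambda$ correspond under $\chi\circ\pi$ via \eqref{eq: B and Lambda}; given this, $\P(\Phi_\B^s)=\P(\Phi_\Lambda^s)$ is immediate.
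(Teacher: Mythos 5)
Your proposal is sound in outline, but it takes a genuinely different route from the paper at the one step where real work is needed, namely comparing the symbolic pressure with the pressure on $\Lambda$ across the finite-to-one coding $\chi$. The paper splits the lemma in two: it introduces the pulled-back potential $\Phi_\Lambda^{s,+}$ on $\Sig^+$ and proves $\P(\Phi_\Lambda^{s,+}) = \P(\Phi_\Lambda^s)$ \emph{measure-theoretically}, via the subadditive variational principle together with Ledrappier--Walters' relativised variational principle (finiteness of $\chi^{-1}(x)$ forces $h_\mu(f) = h_{\chi_*\mu}(h)$ and $\F(\Phi_\Lambda^{s,+},\mu) = \F(\Phi_\Lambda^s,\chi_*\mu)$), and only then handles the passage from $(\Sig^+,f)$ to $(\Sig,f^{-1})$ by the elementary correspondence $\widetilde{E} \mapsto f^{n-1}\widetilde{E}$ between $(n,1)$-separated sets, using \eqref{eq: B and Lambda}. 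You instead propose a direct geometric counting argument on $\Lambda$: cylinder sums, bounded distortion, small $d_n$-diameter of refined Markov rectangles, and bounded multiplicity to absorb boundary overlaps. This can be made rigorous (you correctly identify the boundary-overlap problem and the ingredients needed to control it), and it has the merit of staying entirely at the level of separated/spanning sets; but be aware that the ``standard fact'' you want to cite --- invariance of pressure under a finite-to-one H\"older semiconjugacy --- is stated in \cite{bowen1979hausdorff}, \cite{ruelle1982repellers}, \cite{ruelle1989thermodynamic} for additive potentials, not subadditive ones, so either you complete the counting estimates yourself (in particular a uniform-in-$n$ bound on the number of $(n,\ep)$-separated points inside one element of $\bigvee_{i=0}^{n-1}h^{-i}\mathcal{R}$, which follows from injectivity of $h^{n-1}$ on refined rectangles) or you fall back on the variational-principle argument, which is exactly what the paper does. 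One further bookkeeping point: since $F_\B$ is a cocycle over $(\Sig,f^{-1})$, the cylinder sums computing $\P(\Phi_\B^s)$ must use cylinders adapted to the backward coordinates $x_0,x_{-1},\ldots,x_{-(n-1)}$ (equivalently, apply $f^{n-1}$ as in the paper), not the forward cylinders $[\I]$, $\I \in \L(n)$, that appear in your second step; the bounded distortion of $\Phi_\B^s$ then covers the residual dependence on the remaining coordinates. This is a fixable indexing slip, not a conceptual error.
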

\begin{proof}
Let $\Phi_\Lambda^{s,+} = \{\log \varphi^{s,+}_{\Lambda,n}\}_{n \in \N}$ be a subadditive sequence on $\Sig^+$ defined by $$\varphi^{s,+}_{\Lambda,n}(x) := \vps_{\Lambda,n}(\chi (x))=\vphi^s((D_{\chi (x)}h^n)^{-1}).$$ 
Then $\P(\Phi_\Lambda^{s,+})$ and $\P(\Phi_\Lambda^{s})$ are equal, as described below.

Consider any $\mu \in \M(f)$ and $\nu \in \M(h)$ related by $\chi_* \mu = \nu$. From the fact that $\chi$ is the coding map, we have $$h_\mu(f) = h_\nu(h).$$
Indeed, we have $h_\mu(f) \geq h_{\nu} (h)$ since $(\Lambda,h|_\Lambda)$ is a factor of $(\Sig^+,f)$. For the reverse inequality, Ledrappier-Walters' relativised variational principle \cite{ledrappier1977relativised} states that for any $\nu \in \M(h)$, we have
$$ \sup\limits_{\widetilde{\mu} \colon \chi_*\widetilde{\mu} = \nu} h_{\widetilde{\mu}}(f) = h_\nu(h)+\int\limits_\Lambda h(f,\chi^{-1}(x))d\nu(x).$$ 
Since the pre-image $\chi^{-1}(x)$ is finite, $h(f,\chi^{-1}(x)) = 0$ for every $x \in \Lambda$, and we have $h_\mu(f) \leq h_\nu(h)$.

For $\chi_*\mu = \nu$, we also have $\F(\Phi^{s,+}_\Lambda,\mu) = \F(\Phi^s_\Lambda,\nu)$ from the definition of $\Phi^{s,+}_\Lambda$. Then it follows that $\P(\Phi_\Lambda^{s,+})=\P(\Phi_\Lambda^{s})$ from the subadditive variational principle \eqref{eq: var prin}. Hence, it suffices to show that $\P(\Phi_\Lambda^{s,+})$ is equal to $\P(\Phi_\B^s)$.

From the expansivity of $(\Sig,f)$, it suffices to consider $(n,1)$-separated sets in the definition of the pressure (see \cite{keller1998equilibrium}). Notice that on $(\Sig^+,f)$, a subset $E \subset \Sig^+$ is $(n,1)$-separated if any two distinct $x,y\in E$ satisfy $x_i \neq y_i$ for some $0 \leq i \leq n-1$ (i.e., $y \not\in [x]_n$).

For every $x\in \Sig^+$, we choose a point $\widetilde{x} \in \Sig$ such that $\pi \widetilde{x} = x$. Then \eqref{eq: defn of B} and \eqref{eq: B and Lambda} gives
\begin{equation}\label{eq: B = Lambda}
\vps(\B^n(f^{n-1}\widetilde{x})) = \varphi^{s,+}_{\Lambda,n}(x).
\end{equation}

We observe a simple relationship between $(n,1)$-separated sets in $(\Sig^+,f)$ and $(n,1)$-separated sets in $(\Sig,f^{-1})$.
Given any $(n,1)$-separated set $E$ in $(\Sig^+,f)$, for each $x \in E$ we choose any point $\widetilde{x} \in \Sig$ from $\pi^{-1}(x)$, and call the corresponding set $\widetilde{E} \subset \Sig$. Then $f^{n-1}\widetilde{E}$ is a $(n,1)$-separated set in $(\Sig,f^{-1})$. 
Conversely, given any $(n,1)$-separated set $\widetilde{E}$ of $(\Sig,f^{-1})$, the projection $\pi(f^{-n+1}\widetilde{E})$ is a $(n,1)$-separated set in $(\Sig^+,f)$. 

From \eqref{eq: B = Lambda}, 
$$\sup \big\{ \sum\limits_{x \in \widetilde{E}} \vps(\B^n(\widetilde{x})) \colon \widetilde{E} \text{ is }(n,1)\text{-separated in }(\Sig,f^{-1}) \big\}$$
is equal to 
$$\sup \big\{\sum\limits_{x \in E} \varphi^{s,+}_{\Lambda,n}(x) \colon E \text{ is }(n,1)\text{-separated in }(\Sig^+,f) \big\}$$
for each $n\in \N$.
Hence, the definition of the subadditive pressure \eqref{eq: pressure} gives $\P(\Phi^s_\B) = \P(\Phi_\Lambda^{s,+})$.
\end{proof}

\begin{proof}[Proof of Theorem \ref{thm: C}]
From Lemma \ref{lem: VV_2 typical}, there exist a $C^1$-neigbhorhood $\VV_1$ of $h$ in $C^r(M,M)$ and a $C^1$-open and $C^r$-dense subset $\VV_2$ of $\VV_1$ such that every $g \in \VV_2$ is typical.  Theorem \ref{thm: B} and Lemma \ref{lem: P=P} give us that the map $$(g,s) \mapsto  \P(\Phi_{\B_g}^s) = \P(\Phi_{\Lambda_g}^s)$$ is continuous on $\VV_2 \times [0,\infty)$. Hence the map $g\mapsto s(\Lambda_g)$ is continuous on $\VV_2$. 
\end{proof}

%
%
%
%
%
%
%
%
%
%

\section{Other applications of Theorem \ref{thm: E}}
\subsection{Pointwise Lyapunov spectrum and proof of Theorem \ref{thm: D}} We prove Theorem \ref{thm: D} in this subsection. 

Recall from the introduction that
$$\lambda_t(x) :=\lim\limits_{n \to \infty} \frac{1}{n}\log\vphi^t(\A^n(x)),$$
if the limit exists (See \cite{barreira2002lyapunov} for a general discussion on the pointwise Lyapunov exponent). 

We may think of $\lambda_t(x)$, if it exists, as the sum of top $t$ Lyapunov exponents of $x$.
Let
$$\vec{\lambda}(x) = (\lambda_1(x),\ldots,\lambda_d(x)),$$
if $\lambda_t(x)$ exists for each $1 \leq t \leq d$. Let
$$L_{\A}:=\{\vec{\alpha} \in \R^d \colon \vec{\alpha} = \vec{\lambda}(x) \text{ for some } x \in \Sig\}.$$

\begin{thm*}[Theorem \ref{thm: D}]
Let $\A \in \U$. Then $L_{\A}$ is a closed and convex subset of $\R^d$. 
\end{thm*}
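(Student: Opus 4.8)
The plan is to prove that $L_{\A}$ is both convex and closed by exploiting the quasi-multiplicativity of all the singular value functions $\widetilde{\vphi}^t_\A$, $1 \le t \le d$, established in Theorem \ref{thm: E}, together with the subadditive thermodynamic formalism. The key intermediate object is the set of Lyapunov vectors realized by ergodic invariant measures: for an ergodic $\mu \in \M(f)$, by Kingman's subadditive ergodic theorem the limit $\lambda_t(x)$ exists for $\mu$-a.e. $x$ and equals $\F(\Phi_\A^t,\mu)$ (independent of $x$). So I first reduce the problem to describing the set $K := \{(\F(\Phi_\A^1,\mu),\ldots,\F(\Phi_\A^d,\mu)) : \mu \in \M(f)\}$ and then showing (a) $K \subseteq L_{\A}$ and (b) $L_{\A} \subseteq \overline{K}$, with (c) $K$ itself being closed and convex.

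For convexity and closedness of $K$, I would use that each map $\mu \mapsto \F(\Phi_\A^t,\mu)$ is affine on $\M(f)$ — this is standard for subadditive potentials arising from a fixed cocycle, since $\F(\Phi_\A^t,\mu) = \inf_n \frac1n\int\log\vphi^t(\A^n)\,d\mu$ is an infimum of affine continuous functions (hence concave and upper semicontinuous), and in fact linearity holds because the Lyapunov exponents are additive over ergodic decompositions. Combined with weak-$*$ compactness of $\M(f)$ this makes $K$ the continuous affine image of a compact convex set, hence compact and convex. The inclusion $K \subseteq L_{\A}$ follows by picking, for each $\mu$, an ergodic component and a generic point; but to get \emph{all} coordinates simultaneously realized at a single point one uses that a generic point of an ergodic measure simultaneously realizes every $\lambda_t$. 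The reverse inclusion $L_{\A} \subseteq \overline{K}$: given $x$ with $\vec\lambda(x)$ defined, take empirical measures $\mu_n = \frac1n\sum_{i=0}^{n-1}\delta_{f^i x}$ and a weak-$*$ limit $\mu$; subadditivity and upper semicontinuity of $\F(\Phi_\A^t,\cdot)$ give $\F(\Phi_\A^t,\mu) \ge \limsup \frac1n\int \cdots = \lambda_t(x)$, and a lower bound in the other direction from the same estimate pins $\vec\lambda(x)$ between limits of points of $K$, so $\vec\lambda(x)\in\overline{K} = K$.

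The role of typicality (via Theorem \ref{thm: E}) enters precisely in showing $L_{\A}$ is \emph{closed} — i.e., that the $\limsup$ estimates above can be upgraded to genuine equalities so that $L_{\A} = K$ exactly rather than just $L_{\A} \subseteq K = \overline{K}$. Here quasi-multiplicativity of $\widetilde{\vphi}^t_\A$ gives the Gibbs property (Proposition \ref{prop: unique eq as gibbs} together with Lemma \ref{lem: bdd distortion}) for the unique equilibrium state of each $\Phi_\A^t$, and more usefully it lets one concatenate admissible words with uniformly bounded bridges while keeping multiplicative control on \emph{all} the $\vphi^t$ simultaneously. This is what allows a "horseshoe"/concatenation construction: given any target vector $\vec\alpha \in K$, realized approximately by measures, one builds a point $x$ whose orbit spends long stretches shadowing orbit segments with the right growth rates for every $t$ at once (the uniform connecting word $\K$ of Theorem \ref{thm: E} being essential so that switching between regimes costs only a bounded multiplicative error), forcing $\vec\lambda(x) = \vec\alpha$ exactly. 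The main obstacle is this last step: ensuring that when one interpolates between orbit segments tuned to different invariant measures, \emph{all $d$ of the quantities $\frac1n\log\vphi^t(\A^n(x))$ converge} — not just one — which is exactly why simultaneous quasi-multiplicativity with a common word $\K$, rather than separate quasi-multiplicativity for each $t$, is needed, and why the hypothesis $\A \in \U$ cannot be weakened to mere irreducibility of one exterior power.
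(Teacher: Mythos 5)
Your proposal routes the theorem through the set $K$ of Lyapunov vectors of invariant measures, but each of the three bridges you need to cross has a genuine gap. First, $\mu \mapsto \F(\Phi_\A^t,\mu)$ is affine and upper semicontinuous (an infimum of continuous affine maps), but it is \emph{not} continuous on $\M(f)$ in general, so "$K$ is the continuous affine image of a compact convex set, hence compact" is not available; convexity of $K$ survives, closedness does not. Second, the inclusion $K \subseteq L_\A$ is only justified for ergodic $\mu$: a generic point realizes the vector of one ergodic component, and for non-ergodic $\mu$ the components typically have different Lyapunov vectors, so realizing $\F(\vec{\Phi}_\A,\mu)$ at a single point is exactly the convexity statement you are trying to prove; the proposal at this point assumes its conclusion unless the concatenation construction is actually carried out. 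Third, the inclusion $L_\A \subseteq \overline{K}$ via empirical measures only yields one inequality: subadditivity plus weak-$*$ convergence gives $\lambda_t(x) \leq \F(\Phi_\A^t,\mu)$ for any limit measure $\mu$, but "a lower bound in the other direction from the same estimate" is not a valid step -- subadditive comparison is one-sided, and a matching lower bound would require an almost-superadditivity argument (i.e.\ quasi-multiplicativity used quantitatively along the orbit of $x$), which you do not supply. So even granting the convexity of $K$, your chain only places $\vec{\lambda}(x)$ coordinatewise below some point of $K$, which does not put it in $K$.

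The step where you invoke typicality -- building a point whose orbit shadows prescribed blocks with the uniformly bounded connecting words of Theorem \ref{thm: E} so that \emph{all} $d$ averages $\frac1m\log\vphi^t(\A^m(\omega))$ converge to the target -- is indeed the heart of the matter, but in your write-up it remains a one-sentence sketch. The paper's proof is precisely this construction, done twice and without any detour through measures: for closedness one concatenates blocks $[x_i]_{m_i}^w$ separated by connecting words $\K_i$, choosing the lengths $m_i$ inductively so large that the crude bounds ($\log\Upsilon$, $\log\varrho$, the bounded-distortion constant, and $\log c$ from quasi-multiplicativity) over the next bridge-plus-waiting stretch of length $k_i+N_{i+1}$ are absorbed, keeping every running average within $2\ep_i$ of the target; for convexity one concatenates $[x]_j^w$ and $[y]_j^w$ with asymptotic time-frequencies $\gamma$ and $1-\gamma$ and verifies the limit by an explicit counting estimate. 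Without that bookkeeping (in particular, a verification that the interpolation errors are uniformly controlled for all $t$ simultaneously and vanish in the limit), the proposal identifies the right tool but does not constitute a proof, and the measure-theoretic reduction it is built on is itself not sound as stated.
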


\begin{rem}
Theorem \ref{thm: D} is a generalization of earlier works on the structure of various spectrums.
For instance, the pointwise Lyapunov exponent $\lambda_t(x)$ may be considered as a subadditive generalization of the Birkhoff average of a continuous function $\vphi$ defined as $$\overline{\vphi}(x) := \lim\limits_{n \to \infty} \frac{1}{n}(\vphi(x)+\ldots +\vphi(f^{n-1}x)),$$ if the limit exists.
For any \hol continous potential $\vphi$ over a mixing subshift of finite type, Pesin and Weiss \cite{pesin2001multifractal} showed that the spectrum of the Birkhoff average $\overline{\vphi}$ is a closed interval. 

For a class of subadditive potentials, Feng \cite{feng2003lyapunov,feng2009lyapunov} considered the pointwise top Lyapunov spectrum for locally constant cocycles over a subshift of finite type. Under the irreducibility assumption, he obtained a similar result to \cite{pesin2001multifractal} that the spectrum is a closed interval.

We prove Theorem \ref{thm: D} using Theorem \ref{thm: E} and ideas in \cite{feng2003lyapunov,feng2009lyapunov}. Theorem \ref{thm: D}  extends the result of Feng in two ways: we consider more general class of cocycles (i.e., fiber-bunched) and we consider the spectrum of all pointwise exponents $\lambda_t$ for $1 \leq t \leq d$ simultaneously as opposed to the top exponent $\lambda_1$ only. 
\end{rem}
\begin{proof}[Proof of Theorem \ref{thm: D}]
The idea is to carefully concatenate (using quasi-multiplicativity) a sequence of words such that the pointwise Lyapunov exponents exist and behave as controlled. Although this idea applies in showing both convexity and closedness of $L_{\A}$, the constructions are slightly different, and hence we divide the proof into two parts. 

For any $x\in \Sig$, the pointwise Lyapunov exponent $\vec{\lambda}(x)$  depends only on the forward trajectory $\pi x$ of $x$. For instance, any two points on the same stable set have the same pointwise Lyapunov exponents (if they exist). This can be seen from the bounded distortion on $\Phi_\A^t$ coming the existence of the canonical stable holonomy. Hence, we will focus on constructing a one-sided word $\omega^+ \in \Sig^+$ so that any $\omega\in \Sig$ with $\pi \omega = \omega^+$ has the desired pointwise Lyapunov exponents.

Throughout the proof, we denote (over all $1 \leq t \leq d$) the uniform constant from bounded distortion on $\Phi_\A^t$ by $C$, $\max\limits_{x \in \Sig}\|\A^{\wedge t}(x)\|$ by $\Upsilon$, $\min\limits_{x \in \Sig}m(\A^{\wedge t}(x))$ by $\varrho$, and simultaneous quasi-multiplicativity constant by $c \in(0,1)$. Also, similar to the proof of Theorem \ref{thm: qm general}, we always consider all $1 \leq t \leq d$ simultaneously even when it is not explicitly stated.
\\\\
\noindent\textbf{(1) $L_{\A}$ is closed}.\\
Let $\{x_i\}_{i \in \N}$ be a sequence of points in $\Sig$ such that their Lyapunov exponents exist and limit to some $\vec{\lambda}$: $$\vec{\lambda}(x_i) \xrightarrow{i \to \infty} \vec{\lambda} = (\lambda_1,\ldots,\lambda_d).$$ Replacing $x_i$ by a subsequence if necessary, fix a strictly decreasing sequence $\{\ep_i\}_{i \geq 1}$ with $\ep_i \to 0$ and assume that 
\begin{equation}\label{eq: epsilon i first}
|\lambda_t(x_i) - \lambda_t|< \ep_{i+1},
\end{equation}
for each $i \in \N$ and $ 1 \leq t \leq d$. We then fix a strictly increasing sequence $N_i \to \infty$ such that for any $i \in\N$ (serving as a common index for both $x$ and $\ep$) and $1 \leq t \leq d$, 
\begin{equation}\label{eq: epsilon i}
\Big| \frac{1}{N}\log \vphi^t(\A^N(x_i)) - \lambda_t(x_i)\Big| < \ep_{i+1} ~\text{ for each }N \geq N_i.
\end{equation}

Suppose we have chosen another sequence $m_i \to \infty$ with 
$m_i \gg N_{i+1}$ for each $i\in \N$ that satisfies a few extra properties to be determined below.
Define 
$$\omega^+:=[x_1]_{m_1}^w \K_1 [x_2]_{m_2}^w \K_2 [x_3]_{m_3}^w \K_3 \ldots \in \Sig^+$$ where $\K_i \in \L$ is the connecting word with $|\K_i| = k_i\leq k$ given by simultaneous quasi-multiplicativity of $\Phi^t_\A,~t=1,2,\ldots,d$. 
Let $\omega$ be any point in $\Sig$ with $\pi \omega = \omega^+$.

We claim that with appropriate choices of $m_i$'s, the pointwise Lyapunov exponent $\vec{\lambda}(\omega)$ exists and is equal to $\vec{\lambda}$. Since $\ep_i \to 0$, in order to establish the claim, it suffices to show that for each $i\in\N$ and $1 \leq t \leq d$ that
\begin{equation}\label{eq: L closed, induction}
\Big| \frac{1}{m}\log\vphi^t(\A^m(\omega)) - \lambda_t \Big| <2\ep_{i} \text{ for }\sum\limits_{j=1}^{i} m_j+ \sum\limits_{j=1}^{i-1}k_j \leq m < \sum\limits_{j=1}^{i+1} m_j+ \sum\limits_{j=1}^{i}k_j. 
\end{equation}

Consider any $m_1 \in \N$ with $m_1 \gg N_2$. For any $m = m_1+a$ with $0 \leq a < k_1+N_2$, \eqref{eq: epsilon i first} and \eqref{eq: epsilon i} give
\begin{align}\label{eq: m_1 upper}
\begin{split}
\frac{1}{m}\log \vphi^t(\A^m(\omega))  &< \frac{1}{m}\Big( \log\vphi^t(\A^{m_1}(x_1))+\log C +a\log\Upsilon \Big),\\
&\leq \frac{1}{m_1+a}\Big( m_1\lambda_t+2m_1\ep_2 +\log C + a \log \Upsilon  \Big).
\end{split}
\end{align}
For the lower bound, we similarly have
\begin{equation}\label{eq: m_1 lower}
\frac{1}{m_1+a}\Big(m_1\lambda_t -2m_1\ep_2-\log C+ a\log \varrho \Big) \leq \frac{1}{m}\log \vphi^t(\A^m(\omega)).
\end{equation}

Since $\ep_2< \ep_1$ and $a$ is bounded above by $k_1+ N_2$, if we choose $m_1$ sufficiently large, the upper bound \eqref{eq: m_1 upper} is bounded above by $\lambda_t +2\ep_1$ for all $0 \leq a < k_1+N_2$. Likewise, the lower bound \eqref{eq: m_1 lower} is bounded below by $\lambda_t-2\ep_1$ for all $0 \leq a < k_1+N_2$. This establishes \eqref{eq: L closed, induction} for $m \in [m_1,m_1+k_1+N_2)$.

Now consider $m = m_1+k_1+a$ with $a \geq N_2$ (and bounded above by $m_2$ to be chosen). We obtain different bounds on $\frac{1}{m}\log\vphi^t(\A^m(\omega))$ by using \eqref{eq: epsilon i first} and \eqref{eq: epsilon i} for $i = 2$ on the last $a$ terms in the product $\A^m(\omega)$:
\begin{align}\label{eq: m_1+k+N_2 upper}
\begin{split}
\frac{1}{m}\log\vphi^t(\A^m(\omega)) &\leq \frac{1}{m}\Big(\log\vphi^t(\A^{m_1}(x_1))+2\log C+k_1\log\Upsilon+\log\vphi^t(\A^{a}(x_2))\Big),\\
&\leq \frac{1}{m_1+k_1+a}\Big(\lambda_t(m_1+a)+2(m_1\ep_2+a\ep_3) +2\log C+ k_1\log\Upsilon \Big).
\end{split}
\end{align}	
Similarly using quasi-multiplicativity of Theorem \ref{thm: E}, we get
\begin{equation}\label{eq: m_1+k+N_2 lower}
\frac{1}{m_1+k_1+a}\Big(\lambda_t(m_1+a) - 2(m_1\ep_2+a\ep_3)-2\log C +\log c \Big)\leq \frac{1}{m}\log \vphi^t(\A^m(\omega)).
\end{equation}
We further increase $m_1$ if necessary so that the upper and lower bounds \eqref{eq: m_1+k+N_2 upper} and \eqref{eq: m_1+k+N_2 lower} still belong to $(\lambda_t -2\ep_1,\lambda_t + 2\ep_1)$ at $m=m_1+k_1+N_2$. Since the upper \eqref{eq: m_1+k+N_2 upper} and lower \eqref{eq: m_1+k+N_2 lower} bounds limit to $\lambda_t \pm 2\ep_3$ as $a \to \infty$, this gives \eqref{eq: L closed, induction} for $m \in [m_1+k_1+N_2,m_1+k_1+m_2)$, once we choose $m_2$ in the following paragraph.

We now describe the choice of $m_2 \in \N$ satisfying the following properties. As pointed out in the previous paragraph, the upper \eqref{eq: m_1+k+N_2 upper} and lower \eqref{eq: m_1+k+N_2 lower} bounds limit to $\lambda_t \pm 2\ep_3$ as $a \to\infty$. So, we choose $m_2 \gg N_3$ sufficiently large such that the upper \eqref{eq: m_1+k+N_2 upper} and lower \eqref{eq: m_1+k+N_2 lower} bounds at $m = m_1+k_1+m_2$ are close enough to $\lambda_t+2\ep_3$ and $\lambda_t-2\ep_3$, respectively. By doing so, we ensure that the upper bound
$$\frac{1}{m}\Big(\lambda_t(m_1+m_2)+2(m_1\ep_2 +m_2\ep_3)+2\log C +(a+k_1)\log \Upsilon \Big)$$
and the lower bound
$$\frac{1}{m}\Big( \lambda_t(m_1+m_2) - 2(m_1\ep_2 +m_2\ep_3) - 2\log C +\log c +a \log \varrho \Big)$$
of $\frac{1}{m}\log\vphi^t(\A^m(\omega))$ both belong to $(\lambda_t-2\ep_2,\lambda_t+2\ep_2)$ for $m = m_1+k_1+m_2+a$ with $ 0\leq a < k_2+N_3$. From the construction, \eqref{eq: L closed, induction} now holds for $m$ in the range $[m_1+k_1+m_2,m_1+k_1+m_2+k_2+N_3)$. 
Similar to \eqref{eq: m_1+k+N_2 upper} and \eqref{eq: m_1+k+N_2 lower}, $\frac{1}{m}\vphi^t(\A^m(\omega))$ for $m = m_1+k_1+m_2+k_2+a$ with $a \geq N_3$ admits the following upper and lower bound by using \eqref{eq: epsilon i first} and \eqref{eq: epsilon i} for $i=3$ on the last $a$ terms:  
$$\frac{1}{m}\Big(\lambda_t(m_1+m_2+a)+2(m_1 \ep_2+m_2 \ep_3+a\ep_4)+3 \log C + (k_1+k_2) \log \Upsilon)\Big)$$
and
$$\frac{1}{m}\Big(\lambda_t(m_1+m_2+a)-2(m_1\ep_2+m_2\ep_3+a\ep_4)-3\log C +2\log c \Big) .$$ 
We further increasing $m_2$ if necessary such that these bounds at $m = m_1+k_1+m_2+k_2+N_3$ belong to $(\lambda_t-2\ep_2,\lambda_t+2\ep_2)$.
Since these bounds limit to $\lambda_t \pm 2\ep_4$ as $a \to\infty$, this gives \eqref{eq: L closed, induction} for $m \in [m_1+k_1+m_2+k_2+N_3,m_1+k_1+m_2+k_2+m_3)$, once we choose $m_3$.


We continue this inductive process of choosing $m_i$ so that \eqref{eq: L closed, induction} holds. Similar to how we chose $m_2$, we choose $m_i \in \N$ sufficiently large such that the upper and lower bounds (obtained similar to \eqref{eq: m_1+k+N_2 upper} and \eqref{eq: m_1+k+N_2 lower}) of $\frac{1}{m}\log\vphi^t(\A^{m}(\omega))$ at $m =  \sum\limits_{j=1}^{i} m_j+ \sum\limits_{j=1}^{i-1}k_j$ are close enough to $\lambda_t\pm 2\ep_{i+1}$. In estimating $\frac{1}{m}\log\vphi^t(\A^{m}(\omega))$, the large magnitude of $m_i$ helps compensate for the next $k_i+N_{i+1}$ terms following $\sum\limits_{j=1}^{i} m_j+ \sum\limits_{j=1}^{i-1}k_j$ which only admit crude bounds using $\Upsilon$ and $\varrho$. This ensures that $\frac{1}{m}\log\vphi^t(\A^{m}(\omega))$ remains in the range of $(\lambda_t-2\ep_{i},\lambda_t+2\ep_{i})$ for all $m =\sum\limits_{j=1}^{i} m_j+ \sum\limits_{j=1}^{i-1}k_j+a$ with $0 \leq a <k_i+N_{i+1}$. For $m =\sum\limits_{j=1}^{i} m_j+ \sum\limits_{j=1}^{i}k_j+a$ with $a \geq N_{i+1}$, we use \eqref{eq: epsilon i first} and \eqref{eq: epsilon i} on the last $a$ terms with $\ep_{i+2}$, and further increase $m_i$ if necessary such that \eqref{eq: L closed, induction} remains to hold up to $m = \sum\limits_{j=1}^{i+1} m_j+ \sum\limits_{j=1}^{i}k_j$ for some $m_{i+1}$ to be chosen. Repeating this construction, we have \eqref{eq: L closed, induction} for all $m \geq m_1$, proving the claim.\\

\noindent\textbf{(2) $L_{\A}$ is convex}.\\
Let $x,y \in \Sig$ with $\vec{\lambda}(x) = \vec{\alpha}$ and $\vec{\lambda}(y) = \vec{\beta}$. We will show that for all $\gamma \in [0,1]$, there exists $\omega \in \Sig$ with $\vec{\lambda}(\omega) =\gamma\vec{\alpha} +(1-\gamma)\vec{\beta}$; the proof will construct $\omega^+ \in \Sig^+$ by concatenating the words $[x]_n^w$ and $[y]_n^w$ with proportions $\gamma$ and $1-\gamma$, respectively.

We begin by defining a sequence $\{N_i\}_{i \in \N} $ of integers given by $N_i = \lfloor \gamma i \rfloor$ if $i$ is odd and $N_i= \lfloor (1-\gamma)i \rfloor$ if $i$ is even. Then such sequence $\{N_i\}_{i \in \N}$ satisfies
\begin{equation}\label{eq: N_j properties}
\lim\limits_{i \to \infty} N_i = \infty, ~\lim\limits_{i \to \infty} \frac{(i+1)N_{i+1}}{\sum\limits_{j=1}^i jN_j} = 0,~\text{ and }\lim\limits_{i \to \infty} \frac{\sum\limits_{j=1}^i (2j-1)N_{2j-1}}{\sum\limits_{j=1}^{2i} jN_j} = \gamma.
\end{equation}

In fact, the first limit is obvious from the definition of $N_i$. Using $a-1<\lfloor a \rfloor \leq a$ for any $a\in \R$, the third limit follows because both the lower and upper bounds from  
$$\frac{\gamma\sum\limits_{j=1}^i(2j-1)^2 - \sum\limits_{j=1}^i(2j-1)}{\gamma\sum\limits_{j=1}^i(2j-1)^2 +(1-\gamma)\sum\limits_{j=1}^i(2j)^2} \leq \frac{\sum\limits_{j=1}^i (2j-1)N_{2j-1}}{\sum\limits_{j=1}^{2i} jN_j}  \leq \frac{\gamma\sum\limits_{j=1}^i(2j-1)^2}{\gamma\sum\limits_{j=1}^i(2j-1)^2 +(1-\gamma)\sum\limits_{j=1}^i(2j)^2 - \sum\limits_{j=1}^{2i}j}$$
converge to $\gamma$.
Similarly, the second limit also follows along the same reasoning.


Let $\{\omega_n\}_{n \in \N}$ be a sequence of words defined as follows:
$$\underset{N_1}{\underbrace{[x]_1^w,\ldots,[x]_1^w}},\underset{N_2}{\underbrace{[y]_2^w,\ldots,[y]_2^w}},\underset{N_3}{\underbrace{[x]_3^w,\ldots,[x]_3^w}},\underset{N_4}{\underbrace{[y]_4^w,\ldots,[y]_4^w}},\ldots;$$
that is, $\omega_i = [x]_1^w$ for $1\leq i \leq N_1$, $\omega_i = [y]_2^w$ for $N_1+1 \leq i \leq N_1+N_2$, and so on.
  
Consider 
$$\omega^+: =\omega_1 \K_1 \omega_2 \K_2 \omega_3 \K_3 \ldots \in \Sig^+$$
where each connecting word $\K_i \in \L$ with $|\K_i|=k_i \leq k$ is given by simultaneous quasi-multiplicativity from Theorem \ref{thm: E}. 

We will show that $\lim\limits_{m \to \infty} \frac{1}{m}\log \vphi^t(\A^m(\omega)) = \gamma\alpha_t +(1-\gamma)\beta_t$ for all $1\leq t \leq d$. First choose $\ep_m \to 0$ such that for each $1 \leq t\leq d$ and $m \in \N$,
$$\Big| \frac{1}{m} \log\vphi^t(\A^m(x)) - \alpha_t \Big| <\ep_m \text{ and }\Big| \frac{1}{m} \log\vphi^t(\A^m(y)) - \beta_t \Big| <\ep_m.$$
Consider any $m \in \N$ with 
\begin{equation}\label{eq: m range}
m=\sum\limits_{j=1}^{i} jN_j+\sum\limits_{j=1}^{N_1+\ldots+N_i} k_j+a~\text{ with }~ 0 \leq a<(i+1)N_{i+1}+ k_{N_1+\ldots+N_i+1}+\ldots+ k_{N_1+\ldots+N_{i+1}}.
\end{equation}
Denoting $r_j = \alpha_t$ for $j$ odd and $r_j = \beta_t$ for $j$ even, we have
\begin{align*}
\frac{1}{m}\log\vphi^t(\A^m(\omega)) &\leq \frac{1}{m}\Big(\sum\limits_{j=1}^i jN_j(r_j+\ep_j)+\log \Upsilon\big(\sum\limits_{j=1}^{N_1+\ldots+N_i} k_j+a\big)+\log C\big( \sum\limits_{j=1}^i N_j\big) \Big) ,\\
&\leq \frac{\sum\limits_{j=1}^i jN_j(r_j+\ep_j)}{\sum\limits_{j=1}^{i} jN_j}+\frac{\log\Upsilon\big((i+1)N_{i+1}+\sum\limits_{j=1}^{N_1+\ldots+N_{i+1}} k_j\big)}{\sum\limits_{j=1}^{i} jN_j} +\frac{\log C\big(\sum\limits_{j=1}^i N_j\big)}{\sum\limits_{j=1}^{i} jN_j}.
\end{align*}
Sending $m$ to $\infty$, the last two terms both limit to 0 from the definition of $N_j$ and \eqref{eq: N_j properties}. The first term limits to $\gamma \alpha_t +(1-\gamma )\beta_t$ from the third property of \eqref{eq: N_j properties} and the fact that $\ep_j \to 0$. Hence, $$\limsup\limits_{m \to \infty}\frac{1}{m}\log\vphi^t(\A^m(\omega)) \leq \gamma \alpha_t +(1-\gamma )\beta_t ~\text{ for each}~1 \leq t\leq d.$$ 

Conversely, for $m$ in the same range \eqref{eq: m range}, we obtain from simultaneous quasi-multiplicativity that 
$$\frac{1}{m}\log \vphi^t(\A^m(\omega)) \geq \frac{1}{m}\Big(\sum\limits_{j=1}^i jN_j(r_j+\ep_j)+\log c\big(\sum\limits_{j=1}^i N_j\big)+a\log\varrho - \log C \big(\sum\limits_{j=1}^i N_j\big) \Big).$$
It then follows from \eqref{eq: N_j properties} that this lower bound also limits to $\gamma \alpha_t +(1-\gamma )\beta_t$ as $m$ tends to $\infty$. Hence we have constructed $\omega^+ \in \Sig^+$ such that $\vec{\lambda}(\omega)$ exists and is equal to $\gamma\vec{\alpha} +(1-\gamma)\vec{\beta}$ for any $\omega\in\Sig$ with $\pi \omega = \omega^+$. This completes the proof.
\end{proof}

\begin{rem}\label{rem: L_{A,n}}
For each $1  \leq t \leq d$, let
\begin{equation}\label{L_{A,t}}
\vec{\lambda}_{t}(x):= (\lambda_1(x),\ldots,\lambda_t(x)),
\end{equation}
if each $\lambda_i$ exists. Note $\vec{\lambda}_d(x)$ is equal to $\vec{\lambda}(x)$. 

Then the same proof of Theorem \ref{thm: D} shows that the $t$-th pointwise Lyapunov spectrum $L_{\A,t}$ is also closed and convex for any $\A \in \U$.
\end{rem}

\begin{proof}[Proof of Corollary \ref{cor: perturb conformal Lyap spectrum}]
Fix any $\alpha \in (0,1)$ such that $r-1>\alpha$. Since $h|_\Lambda$ is conformal, by choosing $\VV_1$ sufficiently small, we ensure that any $g \in \VV_1$ is $\alpha$-bunched. From Lemma \ref{lem: VV_2 typical} and Remark \ref{rem: VV_2 typical}, there exists a $C^1$-open and $C^r$-dense subset $\VV_2$ of $\VV_1$ such that the derivative cocycle $Dg$ of any $g\in \VV_2$ is typical. Then Theorem \ref{thm: D} gives that $L_{g}$ is closed and convex.
\end{proof}

\subsection{Multifractal analysis}
Using simultaneous quasi-multiplicativity of $\Phi_\A^s$ for $\A \in \U$, we perform partial multifractal analysis of the $\vec{\alpha}$-level set 
$$E(\vec{\alpha}):=\{x \in \Sig \colon \vec{\lambda}_t(x) = \vec{\alpha}\}$$
for certain $\vec{\alpha} \in \R^t$. For a general introduction on the multifractal analysis, see \cite{barreira1997general}, \cite{pesin2001multifractal}, \cite{climenhaga2010multifractal}, \cite{climenhaga2014thermodynamic}, and \cite{feng2010lyapunov}.

For an arbitrary system $(X,f)$, arbitrary map $\A \colon X\to \glr$, and arbitrary vector $\vec{\alpha}$, the $\vec{\alpha}$-level set $E(\vec{\alpha})$ may be empty. Even when $E(\vec{\alpha})$ is non-empty, its structure may be irregular. With extra assumptions such as quasi-multiplicativity of the potential $\Phi_\A^s$, we can study such level set $E(\vec{\alpha})$ for certain $\vec{\alpha} \in \R^n$. 


We recall the general setting in which \cite{feng2010lyapunov} is applicable. Let $(X,f)$ be a compact metric space. For any $\vec{q} = (q_1,\ldots,q_t) \in \R^t_+$ and $\vec{\Phi} = (\Phi_1,\ldots,\Phi_t)$ where each $\Phi_i = \{\log \vphi_{i,n}\}_{n \in \N}$ is a subadditive sequence of potential on $X$, we define 
$$\vec{q}\cdot \vec{\Phi}:= \sum\limits_{i=1}^m q_i \Phi_i = \Big\{\sum\limits_{i=1}^m q_i\log \vphi_{i,n}\Big\}_{n \in \N}.$$
In what follows, let $$P_{\vec{\Phi}}(\vec{q}):= \P(\vec{q} \cdot \vec{\Phi})~\text{ and }~\F(\vec{\Phi},\mu) := \big(\F(\Phi_1,\mu),\ldots,\F(\Phi_t,\mu)\big),$$
where $\F$ is defined as in \eqref{eq: var prin}. 

Using Bowen's definition of entropy of non-compact sets \cite{bowen1973topological}, Feng and Huang showed that
\begin{prop}\cite[Theorem 4.8]{feng2010lyapunov}\label{prop: multifractal}
Suppose the entropy map of the system $(X,f)$ is upper semi-continuous. If $\vec{q}_0 \in \R^t_+$ such that $\vec{q}_0\cdot \vec{\Phi}$ has a unique equilibrium state $\mu_{\vec{q}_0}$, then the subadditive pressure $\P_{\vec{\Phi}}(\vec{q})$ is differentiable at $\vec{q}_0$ and the gradient $\nabla \P_{\vec{\Phi}}$ at $\vec{q}_0$ is equal to $\F(\vec{\Phi},\mu_{\vec{q}_0})$. Moreover, denoting $\vec{\alpha}:= \nabla \P_{\vec{\Phi}}(\vec{q}_0)$, the $\vec{\alpha}$-level set $E(\vec{\alpha})$ is non-empty and satisfies 
\begin{equation}\label{eq: multifractal 1}
h_{\text{top}}(E(\vec{\alpha})) = h_{\mu_{\vec{q}_0}}(f).
\end{equation}
\end{prop}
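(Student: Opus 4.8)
The plan is to deduce this statement from the general multifractal formalism of Feng and Huang \cite{feng2010lyapunov}, so the task is to check that our hypotheses match theirs and to recall why the argument goes through. The two structural inputs are the subadditive variational principle \eqref{eq: var prin}, which presents $\vec q \mapsto P_{\vec\Phi}(\vec q) = \P(\vec q\cdot\vec\Phi)$ as a supremum of the affine functions $\vec q \mapsto h_\mu(f) + \vec q\cdot\F(\vec\Phi,\mu)$ over $\mu \in \M(f)$, hence as a finite convex function on $\R^t$; and the assumed upper semi-continuity of the entropy map, which guarantees that the supremum is attained, i.e. that every $\vec q$ admits at least one equilibrium state for $\vec q\cdot\vec\Phi$.

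First I would establish the differentiability of $P_{\vec\Phi}$ at $\vec q_0$. A finite convex function on $\R^t$ is differentiable precisely at the points where its subdifferential is a single vector, and for a supremum of affine maps the subdifferential at $\vec q_0$ is the closed convex hull of $\{\F(\vec\Phi,\mu) : \mu\ \text{is an equilibrium state of}\ \vec q_0\cdot\vec\Phi\}$. Since $\vec q_0\cdot\vec\Phi$ has a unique equilibrium state $\mu_{\vec q_0}$ by hypothesis, this hull degenerates to the single vector $\F(\vec\Phi,\mu_{\vec q_0})$, so $P_{\vec\Phi}$ is differentiable at $\vec q_0$ with $\nabla P_{\vec\Phi}(\vec q_0) = \F(\vec\Phi,\mu_{\vec q_0}) =: \vec\alpha$. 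In particular $\mu_{\vec q_0}$ realizes the supremum defining $P_{\vec\Phi}(\vec q_0)$, which gives the Legendre-type identity $h_{\mu_{\vec q_0}}(f) = P_{\vec\Phi}(\vec q_0) - \vec q_0\cdot\vec\alpha$.

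Next I would analyse the level set $E(\vec\alpha)$. Applying Kingman's subadditive ergodic theorem to each $\Phi_i$ with respect to the ergodic measure $\mu_{\vec q_0}$ shows that $\lim_n \tfrac1n\log\vphi_{i,n}(x) = \F(\Phi_i,\mu_{\vec q_0}) = \alpha_i$ for $\mu_{\vec q_0}$-a.e. $x$, so $\mu_{\vec q_0}(E(\vec\alpha)) = 1$ and in particular $E(\vec\alpha) \neq \emptyset$. The lower bound $h_{\text{top}}(E(\vec\alpha)) \geq h_{\mu_{\vec q_0}}(f)$ follows from the entropy distribution principle of \cite{feng2010lyapunov}: the Bowen topological entropy of a Borel set dominates $h_\nu(f)$ for any invariant measure $\nu$ carrying that set. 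For the matching upper bound one uses the fact that on $E(\vec\alpha)$ the averages $\tfrac1n\sum_i q_{0,i}\log\vphi_{i,n}(x)$ converge to $\vec q_0\cdot\vec\alpha$, so order-$n$ Bowen balls along $E(\vec\alpha)$ can be covered with total mass $\exp\bigl(n(P_{\vec\Phi}(\vec q_0) - \vec q_0\cdot\vec\alpha) + o(n)\bigr) = \exp\bigl(n\,h_{\mu_{\vec q_0}}(f) + o(n)\bigr)$; inserting this into the definition of Bowen entropy gives $h_{\text{top}}(E(\vec\alpha)) \leq h_{\mu_{\vec q_0}}(f)$, and hence \eqref{eq: multifractal 1}.

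The main obstacle is the upper bound in the last step: to turn the pointwise convergence on $E(\vec\alpha)$ into a uniform exponential covering estimate one must exhaust $E(\vec\alpha)$ by subsets on which the convergence of all $t$ subadditive averages is uniform and then control the subadditive sums simultaneously, which is the technical core of \cite[Theorem 4.8]{feng2010lyapunov}. In our intended application these ingredients are available once we record that the relevant potentials have the Gibbs property and unique equilibrium states (Theorem~\ref{thm: E} together with Proposition~\ref{prop: unique eq as gibbs}) and that $(\Sig,f)$ has an upper semi-continuous entropy map, so the proof ultimately reduces to invoking \cite[Theorem 4.8]{feng2010lyapunov}.
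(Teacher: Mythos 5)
Your proposal is fine: the paper offers no proof of this proposition at all, stating it as a quotation of \cite[Theorem 4.8]{feng2010lyapunov}, and your argument ultimately reduces to the same citation, with your sketch of the internals (differentiability from the singleton subdifferential under the unique-equilibrium hypothesis, non-emptiness of $E(\vec{\alpha})$ via Kingman applied to the ergodic measure $\mu_{\vec{q}_0}$, and the two entropy bounds) being an accurate outline of how Feng--Huang prove it. So you take essentially the same route as the paper, just with more detail than the paper records.
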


\begin{rem} We have only stated parts of \cite[Theorem 4.8]{feng2010lyapunov} in order to keep the proposition simple. Indeed, under the same assumptions and notations $\vec{\alpha}:= \nabla \P_{\vec{\Phi}}(\vec{q}_0)$, the topological entropy of the $\vec{\alpha}$-level set $E(\vec{\alpha})$ is also equal to other quantities:
\begin{align}\label{eq: multifractal 2}
\begin{split}
h_{\text{top}}(E(\vec{\alpha})) &= \inf\limits_{\vec{t} \in \R^t_+} \Big(  \P_{\vec{\Phi}}(\vec{t}) - \vec{\alpha} \cdot \vec{t} \Big) = \P_{\vec{\Phi}}(\vec{q}_0) - \vec{\alpha} \cdot \vec{q}_0,\\
&=\sup \{h_\mu(f) \colon \mu \in \M(f),~ \F(\vec{\Phi},\mu) = \vec{\alpha}\}.
\end{split}
\end{align} 
Barreira-Gelfert \cite{barreira2006multifractal} first obtained similar results for repellers of $C^{1+\alpha}$ maps satisfying the cone condition and bounded distortion. \cite{feng2010lyapunov} improved the result to the more general setting, described in Proposition \ref{prop: multifractal}. See also \cite{pesin2001multifractal} and \cite{fan2001recurrence} for related earlier works, establishing similar results for additive potentials.
\end{rem} 

We apply the proposition to $\vec{\Phi}_\A = (\Phi^1_\A,\ldots,\Phi^t_\A)$ for $\A \in \U$.
From Theorem \ref{thm: E}, it follows that the subadditive potential $\vec{q}_0 \cdot \vec{\Phi}_\A$ is quasi-multiplicative for any $\vec{q}_0 \in \R^t_+$. Then Proposition \ref{prop: unique eq as gibbs} gives the unique equilibrium state $\mu_{\vec{q}_0}$ of $\vec{q}_0 \cdot \vec{\Phi}_\A$. Hence, we obtain the following corollary:
\begin{cor}\label{cor: multifractal}
For any $\A \in \U$ and any $\vec{q}_0 \in \R^t_+$, the subadditive potential $\vec{q}_0 \cdot \vec{\Phi}_\A$ is quasi-multiplicative, and hence, has a unique equilibrium state $\mu_{\vec{q}_0}$. Also, \eqref{eq: multifractal 1} and \eqref{eq: multifractal 2} hold with $\vec{\alpha}:= \nabla \P_{\vec{\Phi}_\A}(\vec{q}_0)$.
\end{cor}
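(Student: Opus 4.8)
The plan is to deduce Corollary~\ref{cor: multifractal} by verifying the hypotheses of Proposition~\ref{prop: multifractal} for the vector potential $\vec{\Phi}_\A=(\Phi_\A^1,\dots,\Phi_\A^t)$, the only substantial point being that for every $\vec{q}_0=(q_1,\dots,q_t)\in\R^t_+$ the subadditive potential $\vec{q}_0\cdot\vec{\Phi}_\A=\bigl\{\sum_{i=1}^t q_i\log\vphi^i(\A^n(\cdot))\bigr\}_{n\in\N}$ admits a unique equilibrium state. Since $(\Sig,f)$ is expansive, its entropy map is upper semi-continuous, so once uniqueness is established Proposition~\ref{prop: multifractal} immediately yields differentiability of $\P_{\vec{\Phi}_\A}$ at $\vec{q}_0$, the identity $\nabla\P_{\vec{\Phi}_\A}(\vec{q}_0)=\F(\vec{\Phi}_\A,\mu_{\vec{q}_0})$, and, with $\vec{\alpha}:=\nabla\P_{\vec{\Phi}_\A}(\vec{q}_0)$, the nonemptiness of $E(\vec{\alpha})$ together with \eqref{eq: multifractal 1} and \eqref{eq: multifractal 2}.

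First I would record that $\vec{q}_0\cdot\vec{\Phi}_\A$ is indeed a subadditive potential with bounded distortion. Each $\vphi^i$ is submultiplicative by \eqref{eq: submult}; raising a nonnegative submultiplicative function to the exponent $q_i\ge 0$ preserves submultiplicativity, and a product of nonnegative submultiplicative functions is submultiplicative, so $\psi_n(x):=\prod_{i=1}^t\vphi^i(\A^n(x))^{q_i}$ defines a submultiplicative sequence whose logarithm is $\vec{q}_0\cdot\vec{\Phi}_\A$. By Lemma~\ref{lem: bdd distortion} each $\Phi_\A^i$ has bounded distortion with some constant $C_i$, and hence the positive linear combination $\vec{q}_0\cdot\vec{\Phi}_\A$ has bounded distortion with constant $\prod_{i=1}^t C_i^{q_i}$.

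The heart of the argument is quasi-multiplicativity of $\{\psi_n\}$. Let $c>0$ and $k\in\N$ be the uniform constants from Theorem~\ref{thm: E}, so that for any $\I,\J\in\L$ there is a single connecting word $\K=\K(\I,\J)$ with $|\K|\le k$, $\I\K\J\in\L$, and $\widetilde{\vphi}^s_\A(\I\K\J)\ge c\,\widetilde{\vphi}^s_\A(\I)\,\widetilde{\vphi}^s_\A(\J)$ for \emph{all} $s\in[0,d]$ simultaneously. Combining this with the bounded distortion of $\Phi_\A^i$ (to replace the cylinder maximum $\widetilde{\vphi}^i_\A(\I\K\J)$ by the value at an arbitrary point of $[\I\K\J]$) and the trivial bounds $\widetilde{\vphi}^i_\A(\I)\ge\vphi^i(\A^{|\I|}(y))$, $\widetilde{\vphi}^i_\A(\J)\ge\vphi^i(\A^{|\J|}(z))$, one obtains for every $1\le i\le t$ and all $x\in[\I\K\J]$, $y\in[\I]$, $z\in[\J]$,
\[
\vphi^i\bigl(\A^{|\I\K\J|}(x)\bigr)\ \ge\ C_i^{-1}c\,\vphi^i\bigl(\A^{|\I|}(y)\bigr)\,\vphi^i\bigl(\A^{|\J|}(z)\bigr).
\]
Because the connecting word $\K$ is common to all indices $i$, raising the $i$-th inequality to the power $q_i$ and multiplying over $i=1,\dots,t$ gives
\[
\psi_{|\I\K\J|}(x)\ \ge\ \Bigl(\textstyle\prod_{i=1}^t (C_i^{-1}c)^{q_i}\Bigr)\,\psi_{|\I|}(y)\,\psi_{|\J|}(z)
\]
for all such $x,y,z$, which is exactly the pointwise form of quasi-multiplicativity for sequences with bounded distortion. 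I expect this step --- rather than any computation --- to be the only place requiring care: it is crucial that Theorem~\ref{thm: E} delivers \emph{one} word $\K$ valid simultaneously for all the singular value functions $\widetilde{\vphi}^s_\A$ with $s\in[0,d]$, in particular for the integers $s=1,\dots,t$, so that the separate inequalities may be multiplied; the bounded distortion estimates serve only to pass between the cylinder maxima $\widetilde{\vphi}^i_\A$ appearing in Theorem~\ref{thm: E} and the pointwise values entering $\psi_n$.

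Finally, with $\vec{q}_0\cdot\vec{\Phi}_\A$ now a subadditive potential that is both quasi-multiplicative and has bounded distortion, the extension of Proposition~\ref{prop: unique eq as gibbs} to subadditive potentials with bounded distortion provides a unique equilibrium state $\mu_{\vec{q}_0}$, which is moreover ergodic and satisfies the Gibbs property. Feeding this into Proposition~\ref{prop: multifractal} as in the first paragraph completes the proof.
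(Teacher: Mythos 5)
Your proposal is correct and follows essentially the same route as the paper: deduce quasi-multiplicativity of $\vec{q}_0\cdot\vec{\Phi}_\A$ from the simultaneous quasi-multiplicativity of Theorem \ref{thm: E} (the common connecting word $\K$ being the crucial point), obtain the unique equilibrium state from the bounded-distortion version of Proposition \ref{prop: unique eq as gibbs}, and then invoke Proposition \ref{prop: multifractal}. The only difference is that you spell out the power-and-multiply step and the bounded-distortion bridge between cylinder maxima and pointwise values, which the paper leaves implicit.
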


\bibliographystyle{amsalpha}
\bibliography{slnr_bib}

\end{document}